\newtheorem{theorem}{Theorem}[section]
\newtheorem{corol}[theorem]{Corollary}
\newtheorem{definition}[theorem]{Definition}
\newtheorem{example}[theorem]{Example}
\newtheorem{lemma}[theorem]{Lemma}
\newtheorem*{problem}{Problem}
\newtheorem{prop}[theorem]{Proposition}
\theoremstyle{definition}
\newtheorem{remark}[theorem]{Remark}
\numberwithin{equation}{section}
\newcommand{\Ss}{\mathbb S}
\newcommand{\Rr}{\mathbb R}
\newcommand{\Qq}{\mathbb Q}
\newcommand{\Zz}{\mathbb Z}
\newcommand{\Cc}{\mathbb C}
\newcommand{\Tt}{\mathbb T}
\newcommand{\eps}{\varepsilon}
\newcommand{\X}{\ensuremath{\mathfrak{X}}}
\renewcommand{\d}{\mathrm d}
\newcommand{\cG}{\ensuremath{\mathcal{G}}}
\newcommand{\cD}{\ensuremath{\mathcal{D}}}
\newcommand{\cK}{\ensuremath{\mathcal{K}}}
\newcommand{\cN}{\ensuremath{\mathcal{N}}}
\newcommand{\pr}{\ensuremath{\mathrm{pr}}}
\newcommand{\Per}{\ensuremath{\mathrm{Per}}}
\newcommand{\timesst}{\tensor[_s]{\times}{_t}}
\newcommand{\diffto}{\xrightarrow{\raisebox{-0.2 em}[0pt][0pt]{\smash{\ensuremath{\sim}}}}}
 \DeclareMathOperator{\Der}{Der}
\DeclareMathOperator{\gl}{\mathfrak{gl}}
\DeclareMathOperator{\so}{\mathfrak{so}}
\DeclareMathOperator{\uu}{\mathfrak{u}}
\renewcommand{\aa}{\mathfrak{a}}
\renewcommand{\sl}{\mathfrak{sl}}
\renewcommand{\sp}{\mathfrak{sp}}
\DeclareMathOperator{\GL}{GL}
\DeclareMathOperator{\SL}{SL}
\DeclareMathOperator{\SO}{SO}
\DeclareMathOperator{\SU}{SU}
\DeclareMathOperator{\OO}{O}
\DeclareMathOperator{\UU}{U}
\DeclareMathOperator{\Sp}{Sp}
\DeclareMathOperator{\tr}{trace}
\DeclareMathOperator{\Diff}{Diff}
\DeclareMathOperator{\Curv}{Curv}
\DeclareMathOperator{\Tors}{Tors}
\DeclareMathOperator{\rank}{rank}
\renewcommand{\hom}{\mathrm{Hom}}
\newcommand{\reg}{\mathrm{reg}}
\newcommand{\can}{\mathrm{can}}
\newcommand{\G}{\mathit{\Gamma}}            
\newcommand{\s}{\mathbf{s}}             
\renewcommand{\t}{\mathbf{t}}           
\renewcommand{\H}{\mathcal{H}}          
\newcommand{\A}{A}                      
\newcommand{\al}{\alpha}                
\newcommand{\be}{\beta}                 
\newcommand{\Lie}{\mathcal{L}}          
\renewcommand{\gg}{\mathfrak{g}}        
\newcommand{\hh}{\mathfrak{h}}          
\newcommand{\Ker}{\text{\rm Ker}\,}     
\renewcommand{\Im}{\text{\rm Im}\,}     
\newcommand{\Ad}{\text{\rm Ad}\,}       
\newcommand{\tto}{\rightrightarrows}    
\newcommand{\wmc}{\omega_{\textrm{MC}}}   
\DeclareMathOperator{\Inv}{Inv}
\newcommand{\B}{\mathrm{F}}                    
\newcommand{\BG}{\mathrm{F}_G}                 
\newcommand{\OM}{\mathrm{O}(M)}                
\newcommand{\K}{\mathcal{K}}                     
\renewcommand{\O}{\mathrm{O}}     
\begin{document}
\title[Classifying Lie Algebroid of a Geometric Structure]{The Classifying Lie Algebroid of a Geometric Structure II: $G$-structures with connection}
\author{Rui Loja Fernandes}
\address{Department of Mathematics, University of Illinois at Urbana-Champaign, 1409 W. Green Street, Urbana, IL 61801 USA}
\email{ruiloja@illinois.edu}

\author{Ivan Struchiner}
\address{Departamento de Matem\'atica, Universidade de S\~ao Paulo, Rua do Mat\~ao 1010, S\~ao Paulo, SP 05508-090, Brasil}
\email{ivanstru@ime.usp.br}

\thanks{RLF was partially supported by NSF grants DMS-1710884 and DMS-2003223.}

\begin{abstract} 
Given a $G$-structure with connection satisfying a regularity assumption we associate to it a classifying Lie algebroid. This algebroid contains all the information about the equivalence problem and is an example of a $G$-structure Lie algebroid. We discuss the properties of this algebroid, the $G$-structure groupoids integrating it and the relationship with Cartan's realization problem.
\end{abstract}

\date{\today}

\maketitle

\setcounter{tocdepth}{1}
\tableofcontents

\section{Introduction}
\label{sec:introd}

This paper is the long due second part of \cite{FernandesStruchiner1}, whose opening sentence stated ``This is the first of two papers dedicated to a systematic study of symmetries, invariants and moduli spaces of geometric structures of finite type''. The first paper was dedicated to the case of $\{e\}$-structures and a special case of Cartan's realization problem. The central object introduced in \cite{FernandesStruchiner1} was the classifying Lie algebroid of a fully regular coframe, which contains all the information relevant for the equivalence problem for coframes, i.e., $\{e\}$-structures.
Our plan for the second paper was to do a similar analysis for general $G$-structures and the general case of Cartan's realization problem. While performing this analysis we discovered a theory of $G$-structure groupoids and $G$-structure algebroids, which gives the appropriate language to deal with such problems. These objects are introduced and described in detail in our recent paper \cite{FS19}, which also gives our method to solve Cartan's realization problem. Hence, what is left from our original plan is to discuss the classifying Lie algebroid of a fully regular $G$-structure with connection and that is the main aim of this paper. 

Given a $G$-structure on a manifold $M$, denoted $\B_G(M)$, equipped with a connection $\theta\in\Omega^1(\B_G(M),\gg)$, one has the space of invariant forms $\Omega^\bullet(\B_G(M),\omega)$ consisting of all differential forms which are preserved under local equivalences of $(\B_G(M),\omega)$. The $G$-structure with connection is called \textbf{fully regular} when the space
\[ \Big\{\d_p I: I\in \Omega^0(\B_G(M),\omega)\}\subset T^*_p \B_G(M)\Big\}, \]
has constant dimension. In this case, there is naturally associated to $(\B_G(M),\omega)$ a vector bundle $A\to X$ such that
\[ \Omega^\bullet(\B_G(M),\omega)\simeq \Gamma(\wedge^\bullet A^*). \]
It follows that $A$ has a Lie algebroid structure and we call it the \textbf{classifying Lie algebroid} of the fully regular  $G$-structure with connection $(\B_G(M),\omega)$. We will see that this Lie algebroid is in fact a \emph{$G$-structure algebroid} and that it contains all the relevant information about $(\B_G(M),\omega)$, its symmetries, its invariants, etc.

The connection between Cartan's realization problems and Lie algebroids was first pointed out by Bryant in \cite{Bryant} (see also \cite{Bryant:notes}). In \cite{FS19} this is formalized precisely and it is shown that giving such a problem is the same thing as specifying a $G$-structure Lie algebroid with connection. In \cite{FS19} it is also explained how one can solve a Cartan's realization problem by integrating the $G$-structure algebroid to a $G$-structure groupoid with connection. The latter class of groupoids can be loosely described as families of $G$-structures with connection parameterized by the space of objects of the groupoid. We will see that a $G$-structure with connection in such a family need not be fully regular. When it is fully regular, we relate the $G$-structure Lie algebroid defining the family to the classifying Lie algebroid of the $G$-structure with connection.

This paper is organized as follows. In Section \ref{sec:G:structures} we review the basic facts of the theory of $G$-structures. We work with $G$-structures over effective orbifolds since these are the kind of $G$-structures that appear naturally in connection with $G$-structure groupoids and Cartan's realization problems. In Section \ref{sec:classifying:algbrd} we define the classifying Lie algebroid of a fully regular $G$-structure with connection and study its first properties. Section \ref{sec:examples} discusses several examples of $G$-structures with connection and their classifying algebroids. Section \ref{sec:G:structure:algbrd:grpd} contains a summary of the theory of $G$-structure groupoids and $G$-structure algebroids developed by us in \cite{FS19}. This is used in Section \ref{sec:G:realizations} to make precise the link between $G$-structure algebroids with connection and the classifying Lie algebroid of a single $G$-structure. Here the notion of a $G$-realization of a $G$-structure algebroid with connection plays a crucial role. We also explain in this section how $G$-realizations of a $G$-structure algebroid with connection are related to its $G$-integrations and how they can be used to codify the solutions of the associated Cartan's realization problem.

\section{$G$-structures and Connections}
\label{sec:G:structures}

In this section we give a quick review of the theory of $G$-structures. Besides fixing our notations, we also 
present a viewpoint that will be useful to us. Namely, for us $G$-structures are encoded by a free and proper $G$-action together with a ``tensorial'' 1-form (the tautological form). We then relax the freeness condition allowing for locally free $G$-actions, so we can include also $G$-structures over effective orbifolds. The main reason for allowing for locally free actions and orbifolds is not aiming at the most general approach. Rather, as we shall see, orbifolds turn out to be the natural set up for the theory.

\subsection{G-structures} We begin with the description of the classical approach to $G$-structures via reduction of the frame bundle. We then explain the transition to our approach, which does not include any reference to the base manifold or to orbifolds. 

Let $M$ be a $n$-dimensional manifold. Its \textbf{frame bundle} $\pi: \B(M) \to M$ is a $\GL_n(\Rr)$-principal bundle with fiber over $x$ the frames at $x$:
\[\pi^{-1}(x) = \{p: \Rr^n \to T_xM: p \text{ is a linear isomorphism }\}.\]
Given a closed subgroup $G \subset \GL_n(\Rr)$, a \textbf{$G$-structure on $M$} is a reduction of the frame bundle $\B(M)$ to a principal sub-bundle $\BG(M)$ whose structure group is $G$. 

A (locally defined) diffeomorphism  $\varphi: M \to N$ has a canonical lift to a (locally defined) principal bundle map between the frame bundles:
\[\widetilde{\varphi}: \B(M) \to \B(N), \quad \widetilde{\varphi}(p)(v) = \d\varphi (pv) \text{ for all } v\in\Rr^n\]
Two $G$-structures $\BG(M)$ and $\BG(N)$ are \textbf{(locally) equivalent} if there exists a (locally defined) diffeomorphism $\varphi: M \to N$ such that $\widetilde{\varphi}(\BG(M)) = \BG(N)$.


One of the main issues when dealing with $G$-structures is to characterize $G$-structures up to (local) equivalence.  For this, the \textbf{tautological form} of the $G$-structure plays a crucial role: its the $\Rr^n$-valued 1-form $\theta \in \Omega^1(\BG(M), \Rr^n)$ defined for $p\in\BG(M)$ and $\xi\in T_p \BG(M)$ by:
\[\theta_p(\xi) = p^{-1}(\d_p\pi(\xi)).\]
One then has the following classical result (see, e.g., \cite{Sternberg}):

\begin{theorem}\label{thm:tensorial-equiv}
Let $\BG(M)$ and $\BG(N)$ be two $G$-structures over $M$ and $N$ respectively. A principal bundle isomorphism
\[\xymatrix{\BG(M)\ar[d] \ar[r]^\Phi & \BG(N) \ar[d]\\
M \ar[r]_\varphi & N}\]
is an equivalence of $G$-structures if and only if $\Phi^*\theta_N = \theta_M$.
\end{theorem}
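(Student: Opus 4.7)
The statement is essentially a bookkeeping result: the tautological form has been rigged so that recording a tangent vector's coordinates in the frame $p$ is the same as first projecting to $M$ and reading coordinates there. Both directions are a chase through the definitions, so my plan is really just to unravel them carefully.

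For the forward implication, assume $\Phi = \widetilde{\varphi}$ is the canonical lift of an equivalence $\varphi \colon M \to N$. Fix $p \in \BG(M)$ and $\xi \in T_p\BG(M)$. By definition of $\widetilde{\varphi}$, we have $\widetilde{\varphi}(p) = \d\varphi \circ p$ as a linear map $\Rr^n \to T_{\varphi(\pi(p))}N$, hence $\widetilde{\varphi}(p)^{-1} = p^{-1} \circ (\d\varphi)^{-1}$. Combining this with the intertwining identity $\pi_N \circ \widetilde{\varphi} = \varphi \circ \pi_M$, which gives $\d\pi_N \circ \d\widetilde{\varphi} = \d\varphi \circ \d\pi_M$, one computes
\[
(\widetilde{\varphi}^*\theta_N)_p(\xi) = \widetilde{\varphi}(p)^{-1}\bigl(\d_{\widetilde{\varphi}(p)}\pi_N(\d_p\widetilde{\varphi}(\xi))\bigr) = p^{-1}\bigl(\d_p\pi_M(\xi)\bigr) = (\theta_M)_p(\xi).
\]

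For the converse, assume $\Phi$ is a principal bundle isomorphism covering $\varphi$ with $\Phi^*\theta_N = \theta_M$. I need to show $\Phi(p)(v) = \d\varphi(pv)$ for every $p \in \BG(M)$ and $v \in \Rr^n$. The key idea is to witness $v$ using a well-chosen tangent vector at $p$: pick any $\xi \in T_p\BG(M)$ with $\d_p\pi_M(\xi) = pv$ (for instance, by taking a horizontal lift for some auxiliary connection, or just lifting a curve through $\pi(p)$ in the direction $pv$). Then $(\theta_M)_p(\xi) = p^{-1}(pv) = v$. Applying the hypothesis $\Phi^*\theta_N = \theta_M$, we get $(\theta_N)_{\Phi(p)}(\d_p\Phi(\xi)) = v$, which unwinds to
\[
\Phi(p)(v) = \d_{\Phi(p)}\pi_N\bigl(\d_p\Phi(\xi)\bigr) = \d\varphi\bigl(\d_p\pi_M(\xi)\bigr) = \d\varphi(pv),
\]
using the covering relation $\pi_N \circ \Phi = \varphi \circ \pi_M$ in the middle step. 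Thus $\Phi(p) = \widetilde{\varphi}(p)$, so $\Phi = \widetilde{\varphi}$ and $\varphi$ is an equivalence of $G$-structures.

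There is no real obstacle: everything follows by unpacking the definitions. The only small subtlety is, in the converse, realizing that one should produce a tangent vector $\xi$ realizing a prescribed horizontal component $pv$ in order to extract the value $\Phi(p)(v)$ from the equation on tautological forms, and that the answer is independent of the lift chosen since only $\d\pi(\xi)$ enters the computation.
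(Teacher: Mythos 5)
Your proof is correct. The paper does not prove this statement — it quotes it as a classical result with a reference to Sternberg — and your argument is exactly the standard definition-unwinding one would find there: the forward direction follows from $\widetilde{\varphi}(p)^{-1} = p^{-1}\circ(\d\varphi)^{-1}$ together with $\d\pi_N\circ\d\widetilde{\varphi} = \d\varphi\circ\d\pi_M$, and the converse correctly extracts $\Phi(p)(v)$ by evaluating the tautological forms on any lift $\xi$ of $pv$, the choice of which is immaterial since only $\d\pi_M(\xi)$ enters. Nothing to flag.
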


Next, it will be important for us to recognize when a principal $G$-bundle is a $G$-structure. This can be answered by identifying the main properties of the tautological 1-form. A first property is that it is a \emph{tensorial form}. To state it, we introduce the notation $\tilde{\al}\in\X(P)$ to denote the infinitesimal generator of the $G$-action:
\[\tilde{\al}_p := \frac{\d}{\d t}\Big|_{t=0}p\cdot\exp(t\al) \quad (\al\in\gg). \]
Then we have the following definition:

\begin{definition}
Let $P\to M$ be a principal $G$-bundle and $V$ a $G$-representation. A $V$-valued form $\theta\in\Omega^k(P,V)$ is \textbf{tensorial} if it is:
\begin{enumerate}[(i)]
\item \textbf{Horizontal:} $i_{\tilde{\al}}\theta=0$, for all $\al\in\gg$;
\item \textbf{$G$-equivariant:} $g^*\theta = g^{-1}\cdot \theta$, for all $g\in G$.
\end{enumerate}
\end{definition}

In the case of the tautological 1-form the relevant representation of $G \subset \GL_n(\Rr)$ is the defining representation on $\Rr^n$. Besides being a tensorial, the tautological form is also \textbf{pointwise surjective}: for each $p \in \BG(M)$, the map $\theta_p: T_p\BG(M) \to \Rr^n$ is surjective. One then has the following classical result characterizing $G$-structures among all principal $G$-bundles \cite{Bernard60}:

\begin{theorem}\label{thm:tensorial-embedding}
Let $M$ be an $n$-dimensional manifold and $G \subset \GL_n(\Rr)$ a closed Lie subgroup. Let $\pi: P \to M$ be a principal $G$-bundle and assume that $P$ comes equipped with a pointwise surjective tensorial 1-form $\theta_P \in \Omega^1(P,\Rr^n)$. Then there exists a unique embedding of principal bundles
\[\xymatrix{P \ar[rr]^\Phi \ar[dr] && \B(M) \ar[dl]\\
&M&}\]
such that $\Phi^*\theta = \theta_P$.
\end{theorem}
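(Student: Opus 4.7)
The plan is to define $\Phi$ pointwise by producing, at each $p\in P$, a linear isomorphism $\Phi(p)\colon \Rr^n\to T_{\pi(p)}M$, using the relation $\Phi^*\theta=\theta_P$ as a defining equation and then verifying all required properties.

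First I would unpack the compatibility condition. Since $\theta_{\Phi(p)}(\xi)=\Phi(p)^{-1}(\d\pi_{\B(M)}(\xi))$ for the tautological form on $\B(M)$, the equation $\Phi^*\theta=\theta_P$ forces
\[\Phi(p)\bigl((\theta_P)_p(\xi)\bigr)=\d\pi(\xi)\qquad\text{for every }\xi\in T_pP.\]
This both tells us how to define $\Phi(p)$ and gives uniqueness: for $v\in\Rr^n$, pick any $\xi\in T_pP$ with $(\theta_P)_p(\xi)=v$ (possible by pointwise surjectivity) and set $\Phi(p)(v):=\d\pi(\xi)$.

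The crucial well-definedness step is to show $\ker(\theta_P)_p\subseteq \ker\d\pi_p$. Horizontality gives $\tilde{\al}\in\ker\theta_P$ for every $\al\in\gg$, so the vertical subspace $V_pP=\{\tilde{\al}_p:\al\in\gg\}$ is contained in $\ker(\theta_P)_p$. Counting dimensions, $\dim\ker\d\pi_p=\dim V_pP=\dim G$ and, by pointwise surjectivity of $(\theta_P)_p$, $\dim\ker(\theta_P)_p=\dim P-n=\dim G$. Hence $V_pP=\ker(\theta_P)_p=\ker\d\pi_p$. This shows $\Phi(p)$ is well defined, and since $(\theta_P)_p$ descends to a linear isomorphism $T_pP/V_pP\to\Rr^n$ while $\d\pi_p$ descends to the isomorphism $T_pP/V_pP\to T_{\pi(p)}M$, the map $\Phi(p)$ is in fact a linear isomorphism, i.e.\ a point of $\B(M)$ above $\pi(p)$.

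The remaining steps are routine. Smoothness of $\Phi$ follows from the fact that the inverse of the family of quotient isomorphisms $\overline{(\theta_P)_p}$ depends smoothly on $p$, so one can produce local smooth lifts $v\mapsto\xi(p,v)$ and read $\Phi(p)(v)=\d\pi(\xi(p,v))$. For $G$-equivariance, given $v\in\Rr^n$ pick $\eta\in T_pP$ with $(\theta_P)_p(\eta)=gv$; then $\xi:=\d R_g(\eta)\in T_{p\cdot g}P$ satisfies $(\theta_P)_{p\cdot g}(\xi)=g^{-1}(\theta_P)_p(\eta)=v$ by the $G$-equivariance of $\theta_P$, while $\d\pi(\xi)=\d\pi(\eta)$ because $\pi\circ R_g=\pi$; therefore $\Phi(p\cdot g)(v)=\d\pi(\xi)=\d\pi(\eta)=\Phi(p)(gv)$, which is exactly the frame-bundle right action $(\Phi(p)\cdot g)(v)$. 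Being a $G$-equivariant bundle map between two principal $G$-bundles over $M$, $\Phi$ is automatically an injective immersion, and since principal bundle maps are proper, it is an embedding. Finally, $\Phi^*\theta=\theta_P$ by construction, and uniqueness follows from the displayed equation above.

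The one step that actually requires an idea, rather than formal bookkeeping, is the dimension count showing $V_pP=\ker(\theta_P)_p$; everything else is essentially forced once that identification is in place.
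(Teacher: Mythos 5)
Your proof is correct, and it is the standard argument: the paper itself does not prove Theorem \ref{thm:tensorial-embedding} but states it as a classical result and defers to the reference \cite{Bernard60}, so there is no in-paper proof to compare against. Your identification of the key point is also right: everything hinges on the dimension count showing $\ker(\theta_P)_p$ equals the vertical subspace, after which $\Phi(p)=\d\pi_p\circ\overline{(\theta_P)_p}^{\,-1}$ is forced and the equivariance and uniqueness checks are mechanical. One small imprecision in your last step: $\B(M)$ is a principal $\GL_n(\Rr)$-bundle, not a principal $G$-bundle, so ``a $G$-equivariant map between two principal $G$-bundles'' is not quite what you have. The correct statement is that in local trivializations $\Phi$ takes the form $(x,g)\mapsto(x,f(x)g)$ with $f:U\to\GL_n(\Rr)$ smooth, and this is an embedding precisely because $G$ is a \emph{closed} subgroup of $\GL_n(\Rr)$ (so $G\hookrightarrow\GL_n(\Rr)$ is a closed embedding); without closedness $\Phi$ would only be an injective immersion. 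Since closedness of $G$ is a hypothesis of the theorem, this is a gap in exposition rather than in substance, but it is the one place where a hypothesis is actually consumed and deserves to be said explicitly.
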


We conclude from the previous two results that the category of $G$-structures with equivalences can be identify with the category consisting of:
\begin{description}
\item[Objects] Pairs $(P,\theta)$ where $P$ is a manifold with a proper and free $G$-action,
\[\dim P = n + \dim G,\]
and $\theta \in \Omega^1(P, \Rr^n)$ is a tensorial, fiberwise surjective, 1-form. 
\item[Morphisms] $G$-equivariant diffeomorphisms $\Phi: P \to Q$ such that 
\[\Phi^*\theta_Q = \theta_P.\]
\end{description}
Notice that local equivalences correspond to morphisms defined on $G$-saturated open sets.

As we have already mentioned, one advantage of this approach is that one can relax the condition on the $G$-action, replacing free by \emph{locally free and effective}. The quotient $P/G$ is then an effective orbifold, and so one obtains a generalization of the theory of $G$-structures on manifolds to \textbf{$G$-structures on effective orbifolds}. In fact, the frame bundle of an effective orbifold is a manifold (see \cite{MoerdijkMrcun}) and Theorems  \ref{thm:tensorial-equiv} and \ref{thm:tensorial-embedding} remain valid in this setting. Henceforth, we consider the category above with pairs $(P,\theta)$ carrying a locally free, effective and proper $G$-action. More details on the resulting theory of $G$-structures on orbifolds will appear elsewhere \cite{DS21}.

\subsection{Connections} We will be mostly interested in $G$-structures with connections. A \textbf{connection} on a $G$-structure $(P,\theta)$ is a 1-form $\omega \in \Omega^1(P,\gg)$ which is:
\begin{enumerate}[(i)]
\item \textbf{Vertical:} $\omega(\tilde{\al}_p) = \al$ for all $\al \in\gg$;
\item \textbf{$G$-equivariant:} $g^*\omega = \mathrm{Ad}_{g^{-1}}\circ \omega$ for all $g \in G$.
\end{enumerate}

When working in the category of $G$-structures with connections it is natural to consider \textbf{connection preserving equivalences}. Therefore, the morphisms between two $G$-structures with connection $(P, \theta_P, \omega_P)$ and $(Q, \theta_Q, \omega_Q)$ are diffeomorphisms $\Phi: P \to Q$ which are $G$-equivariant and satisfy: 
\[ \Phi^*\theta_Q = \theta_P,\quad \Phi^*\omega_Q = \omega_P. \]

If $(P, \theta, \omega)$ is a $G$-structure with connection, for each $p \in P$ we have a linear isomorphism: 
\[(\theta,\omega)_p : T_pP \to \Rr^n\oplus \gg.\]
In other words, the pair $(\theta,\omega)$ may be interpreted as a coframe on $P$. The exterior derivatives of $\theta$ and $\omega$ can then be re-expressed in terms of the coframe $(\theta,\omega)$. This yields the well-known {\bf structure equations}:
\begin{equation}
\label{eq:structure} \left\{
\begin{array}{l}
\d \theta=T ( \theta \wedge \theta) - \omega \wedge \theta\\
\\
\d \omega= R ( \theta \wedge \theta) - \omega\wedge\omega
\end{array}
\right.
\end{equation}
where $T: P \to \hom(\wedge^2\Rr^n, \Rr^n)$ is the torsion and $R : P \to \hom(\wedge^2\Rr^n,\gg)$ is the curvature of the connection $\omega$. These functions are both $G$-equivariant for the induced actions on the vector spaces $\hom(\wedge^2\Rr^n, \Rr^n)$ and  $\hom(\wedge^2\Rr^n,\gg)$. Note that in the expressions above the wedge products make use of the of the action of $\gg\subset\gl_n(\Rr)$ on $\Rr^n$ and of the Lie bracket of $\gg$:
\begin{align*}
\omega \wedge \theta\, (\xi_1,\xi_2)&=\omega(\xi_1)\cdot\theta(\xi_2)-\omega(\xi_2)\cdot\theta(\xi_1)\\
\omega \wedge \omega\, (\xi_1,\xi_2)&=[\omega(\xi_1),\omega(\xi_2)].
\end{align*}

There may or may not exist a torsion-free connection ($T=0$) on a $G$-structure, and if it exists it might not be unique. The uniqueness, or lack thereof, is controlled by $G$ and its defining action on $\Rr^n$, and does not depend on the $G$-structure itself. On the other hand, the existence of torsion-free connection, in general, depends on the $G$-structure and is a first obstruction to integrability, i.e., for the $G$-structure to be locally equivalent to the trivial $G$-structure. For the proofs of these facts and more details we refer to \cite{Kobayashi,MolinoGstructures,Sternberg}. The table below gives some classes of $G$-structures and the geometric significance of the existence of a torsion-free connection.

\begin{figure}[!h]
\makebox[1 \textwidth][c]{       
\resizebox{1.1 \textwidth}{!}{   
\begin{tabular}{||r|c|c|c||}\hline
$G$            & \textbf{Geometric Structure} &\textbf{Isomorphisms} & \textbf{Torsion-Free Connection} \\ \hline \hline
$\GL_n^+(\Rr) \subset \GL_n(\Rr)\ $ & \footnotesize{Orientation} & \footnotesize{Orientation Preserving Diffeomorphisms} & \footnotesize{All}\\ 
\hline $\mathrm{SL}_{n}(\Rr) \subset \GL_n(\Rr)\ $ & \footnotesize{Volume Form} & \footnotesize{Volume Preserving Diffeomorphisms}    & \footnotesize{All}       \\
\hline $\mathrm{Sp}_{n}(\Rr) \subset \GL_{2n}(\Rr)$  & \footnotesize{Almost Symplectic Structure} & \footnotesize{Symplectomorphisms}   & \footnotesize{Symplectic Structures}   \\ 
\hline $\mathrm{O}_{n}(\Rr) \subset \GL_{n}(\Rr)\ $  & \footnotesize{Riemannian Metrics} & \footnotesize{Isometries}   & \footnotesize{All}   \\ 
\hline $\mathrm{GL}_{n}(\Cc) \subset \GL_{2n}(\Rr)$  & \footnotesize{Almost Complex Structures} & \footnotesize{Holomorphic Diffeomorphisms}   & \footnotesize{Complex Structures}   \\ 
\hline $\mathrm{U}_n\subset \GL_{2n}(\Rr)$  & \footnotesize{Hermitian Metrics} & \footnotesize{Hermitian Isometries}  & \footnotesize{K\"ahler Structures}\\
\hline $\{1\}\subset \GL_{n}(\Rr)\ $  & \footnotesize{Coframes} & \footnotesize{Equivalence of Coframes}  & \footnotesize{Coordinate System Coframes}    \\  \hline
\end{tabular}}}
\end{figure}

\section{The classifying algebroid}
\label{sec:classifying:algbrd}

In this section we associate to a \emph{fully regular} $G$-structure with connection a Lie algebroid. This algebroid contains all the relevant information to decide if two $G$-structures with connection are equivalent.

\subsection{Invariants}

The solution to the equivalence problem relies on understanding the invariants of the $G$-structure with connection.

\begin{definition}
Let $(P,\theta,\omega)$ be a $G$-structure with connection. A differential form $\omega\in\Omega^k(P)$ is called an 
\textbf{invariant $k$-form} of $(P,\theta,\omega)$ if for any locally defined self equivalence $\Phi:\pi^{-1}(U)\to \pi^{-1}(U')$ one has
\[ \Phi^*\Omega=\Omega. \]
The space of invariant $k$-forms is denoted $\Omega^k(P,\theta,\omega)$.
\end{definition}

When $k=0$, the elements of $\Inv(P,\theta,\omega):=\Omega^0(P,\theta,\omega)$ are the \textbf{invariant functions} of the $G$-structure with connection. 

More generally, it is useful to consider invariant functions and forms with values in a vector space $V$. Their components relative to any basis for $V$ will still be ordinary invariant functions and forms. 

\begin{example}
For an arbitrary $G$-structure with connection $(P,\theta,\omega)$ the torsion $T$ is an invariant function with values in the vector space $\hom(\wedge^2\Rr^n, \Rr^n)$ and the curvature $R$ is an invariant function with values in the vector space $\hom(\wedge^2\Rr^n,\gg)$.
\end{example} 

Actually, the torsion and the curvature are examples of \emph{$G$-equivariant} invariant functions. In general, when $V$ is a representation of $G$ one can talk about a $V$-valued \textbf{$G$-equivariant invariant $k$-form} $\Omega\in\Omega^k(P,V)$, i.e., invariant $k$-forms with values in $V$ which are 
$G$-equivariant:
\[ g^*\Omega=g^{-1}\cdot \Omega \quad (g\in G). \]

\begin{example}
For an arbitrary $G$-structure with connection $(P,\theta,\omega)$ the tautological 1-form $\theta$ is a $G$-equivariant invariant 1-form with values in $\Rr^n$ and the connection 1-form $\omega$ is a $G$-equivariant invariant 1-form with values in $\gg$.
\end{example} 

\begin{remark}
\label{rem:equivariant:invariants}
One can also express $V$-valued $G$-equivariant invariant $k$-forms as sections of vector bundles. Given a $G$-representation $V$, one can form the associated vector bundle $E^k:=(\wedge^kTP\times_G V)\to M$, and one obtains a 1:1 correspondence:
\[ 
\left\{\txt{$G$-equivariant forms\\ 
$\Omega\in\Omega^k(P,V)$\\ \,}\right\}
\tilde{\longleftrightarrow}
\left\{\txt{sections \\ 
$s\in\Gamma(E^k)$ \\ \,} \right\}.
\]
To express the invariance condition one observes that for any local equivalence $\Phi:\pi^{-1}(U)\to \pi^{-1}(U')$ one obtains a vector bundle map:
\[
\vcenter{\xymatrix{E^k|_U\ar[d] \ar[r]^{\Phi_{E^k}} & E^k|_{U'} \ar[d]\\
U \ar[r]_\varphi & U'}}\qquad 
[(\xi,v)]\mapsto [(\d\Phi(\xi),v)]. 
\]
Hence, one may call $s\in \Gamma(E^k)$ an \textbf{invariant section} if for any local equivalence $\Phi:\pi^{-1}(U)\to \pi^{-1}(U')$ the section satisfies:
\[ s\circ \phi=\Phi_{E^k}\circ s. \]
Then one finds that there is a 1:1 correspondence:
\[ 
\left\{\txt{$G$-equivariant invariant\\ 
forms $\Omega\in\Omega^k(P,V)$\\ \,}\right\}
\tilde{\longleftrightarrow}
\left\{\txt{invariant sections \\ 
$s\in\Gamma(E^k)$ \\ \,} \right\}.
\]
\end{remark}

A very elementary but important fact is that the space of invariant forms is a subcomplex of the de Rham complex:

\begin{prop}
For any invariant form $\Omega\in\Omega^k(P,\theta,\omega)$ its differential is also an invariant form:  $\d\Omega\in\Omega^{k+1}(P,\theta,\omega)$. If $V$ is a $G$-representation and $\Omega$ is $G$-equivariant $V$-valued invariant form, so is $\d\Omega$.
\end{prop}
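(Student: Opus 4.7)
The plan is to invoke naturality of the exterior derivative twice: once to get invariance under local self-equivalences, and once (in the equivariant case) to commute $\d$ with the linear $G$-action on $V$. Since both conditions defining ``invariant form'' are pullback-type conditions, the statement should reduce to a one-line verification.

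First, I would take an arbitrary local self-equivalence $\Phi:\pi^{-1}(U)\to \pi^{-1}(U')$ of $(P,\theta,\omega)$ and an invariant form $\Omega\in\Omega^k(P,\theta,\omega)$. By definition $\Phi^*\Omega=\Omega$. Applying $\d$ to both sides and using that pullback commutes with exterior differentiation, one gets
\[ \Phi^*\d\Omega=\d\Phi^*\Omega=\d\Omega, \]
which is precisely the statement that $\d\Omega\in\Omega^{k+1}(P,\theta,\omega)$. (In the $V$-valued case one simply applies this componentwise in a basis of $V$, noting that the notion of an invariant $V$-valued form does not involve the $G$-action, only pullback by self-equivalences.)

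Next, for the $G$-equivariant case, I would take $g\in G$ and use the equivariance $g^*\Omega=g^{-1}\cdot\Omega$. Applying $\d$, and using both the naturality of $\d$ under pullback by the diffeomorphism $p\mapsto p\cdot g$ and the fact that the $G$-action on $V$ is linear (so $\d$ commutes with the constant linear map $v\mapsto g^{-1}\cdot v$), I obtain
\[ g^*\d\Omega=\d g^*\Omega=\d(g^{-1}\cdot\Omega)=g^{-1}\cdot \d\Omega. \]
Combined with the previous paragraph, this shows that $\d\Omega$ is again a $G$-equivariant invariant form.

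There is essentially no obstacle: the proposition is a direct consequence of the naturality of $\d$. The only mild subtlety worth a sentence is checking that the exterior derivative on $V$-valued forms (defined componentwise) genuinely commutes with the linear $G$-action on the coefficients; this holds because linear endomorphisms of $V$ pull through $\d$ as constants.
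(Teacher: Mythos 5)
Your proof is correct and is essentially the paper's own argument, which is stated in one line as ``the differential commutes with pullbacks''; you have simply spelled out the two applications of naturality (to self-equivalences and to the $G$-action) that the paper leaves implicit. No gaps.
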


\begin{proof}
The differential commutes with pullbacks.
\end{proof}

In the case of an invariant function $I\in\Inv(P,\theta,\omega)$ one can express its differential in terms of the coframe $(\theta, \omega)$ on $P$:
\[\d I = \frac{\partial I}{\partial \theta} \theta + \frac{\partial I}{\partial \omega} \omega,\]
where the coefficients are vector valued maps:
\[\frac{\partial I}{\partial \theta}: P \to \hom(\Rr^n, \Rr),\quad \frac{\partial I}{\partial \omega}: P \to \hom(\gg,\Rr).\]
By the previous proposition, $\d I$ is an invariant form, and since $\theta$ and $\omega$ are also invariant forms, one deduces that the coefficients are invariant functions.

\begin{definition}
The map $(\frac{\partial I}{\partial \theta},\frac{\partial I}{\partial \omega}): P \to \hom(\Rr^n\oplus\gg,\Rr)$ is called the \textbf{coframe derivative of I with respect to $(\theta, \omega)$}. 
\end{definition}

Notice that if we had started with a $G$-equivariant invariant function $I:P\to V$ then one would obtain a coframe derivative
\[ \Big(\frac{\partial I}{\partial \theta},\frac{\partial I}{\partial \omega}\Big): P \to \hom(\Rr^n\oplus\gg,V) \]
which is also $G$-equivariant with respect to the induced $G$-action on $\hom(\Rr^n\oplus\gg,V)$.

\begin{remark}\label{rmk:covariant:derivative}
As mentioned in Remark \ref{rem:equivariant:invariants}, to a $G$-equivariant invariant corresponds a section $s_I$ of the associated vector bundle $E=(P\times V)/G$. This bundle inherits also a linear connection $\nabla$ from the connection $\omega$ and the coframe derivative $\frac{\partial I}{\partial \theta}$ corresponds to the covariant derivative $\nabla s_I$. Similarly, one finds that the coframe derivative $\frac{\partial I}{\partial \omega}$ corresponds to an algebroid derivative  $\widetilde{\nabla} s_I$ along sections of the adjoint bundle $\Ad(P):=(P\times \gg)/G$, a bundle of Lie algebras. Here, $\widetilde{\nabla}: \Gamma(\Ad(P)) \times \Gamma(E) \to \Gamma(E)$ is the representation
\[\widetilde{\nabla}_{\sigma} s_I = s_{\Lie_{\widetilde{\sigma}}I},\]
where $\widetilde{\sigma} \in \X(P)$ is the $G$-invariant, vertical vector field induced by $\sigma \in \Gamma(\Ad(P))$.
\end{remark}

More generally, one can also express any invariant form $\Omega\in\Omega^k(P,\theta,\omega)$ in terms of the basis of $k$-forms provided by the exterior powers of the coframe $(\theta,\omega)$, and then express its differential $\d\Omega$ similarly in terms of $(\theta,\omega)$. We will not need this more general coframe differentiation procedure since we will soon see a more natural approach using Lie algebroid theory.

\subsection{Fully regular $G$-structures with connection}

Iterating the process of differentiating known invariants one can obtain an infinite list of invariants of the $G$-structure with connection. These invariants may not be independent and, in general, there will exist relations among them. One finds two types of relations:
\begin{itemize}
\item ``universal relations" which, for a fixed $G$, are the same for all $G$-structures with connections. For
example, consequences of the fact that $\d^2 = 0$.
\item ``special relations" arising from the geometry of the particular $G$-structure with connection.
\end{itemize} 

For an example of a ``universal relation'', consider an $\mathrm{O}_n(\Rr)$-structure $(P, \theta, \omega)$, where $\omega$ is the unique torsion free connection on $P$, i.e., the Levi-Civita connection. Then the curvature $R$ satisfies the Bianchi identity, which is already a universal relation. Moreover, this identity implies that the covariant derivative of the curvature: 
\[\frac{\partial R}{\partial \theta}: P \to \hom(\Rr^n, \hom(\wedge^2\Rr^n, \mathfrak{o}_n))\] 
actually takes values in the subspace $\mathcal{K} \subset \hom(\Rr^n, \hom(\wedge^2\Rr^n, \mathfrak{o}_n))$ given by:
\[\mathcal{K} = \Big\{K : K(u)(v,w) + K(v)(w,u)+K(w)(u,v) = 0, \forall u,v,w \in \Rr^n\Big\}. \]
We will see examples of ``special relations'' later.

Obviously, if $\Phi:(P,\theta_P, \omega_P) \to (Q, \theta_Q, \omega_Q)$ is an equivalence between $G$-structures with connections we obtain an isomorphism of complexes: 
\[ \Phi^*:(\Omega^k(Q, \theta_Q,\omega_Q),\d)\diffto(\Omega^k(P,\theta_P,\omega_P),\d). \]
For example, a necessary condition for equivalence is that the structure functions and its coframe derivatives of any order must correspond under the equivalence. This gives an infinite set of necessary conditions for equivalence, that would be hard to work with. However, observe that if invariants $I_{1},...,I_{l}\in\Inv(Q,\theta_Q,\omega_Q)$ satisfy a functional relationship 
\[ F(I_{1},...,I_l)=0, \]
then the corresponding elements $\Phi^*I_1, \ldots, \Phi^*I_l$ in $\Inv(P, \theta_P,\omega_P)$ must satisfy the same functional relation
\begin{equation*}
F\left(\Phi^*I_1, \ldots, \Phi^*I_l \right)=0.
\end{equation*}
This shows that we do not need to deal with all the invariant functions in $\Inv(P,\theta,\omega)$, but only with those that are functionally independent.

\begin{definition}
A $G$-structure with connection $(P,\theta,\omega)$ is called {\bf fully regular} if the dimension of the space:
\[ \mathcal{I}_p = \{ \d_p I: I\in \Inv(P,\theta,\omega)\}\subset T_p^*P, \]
does not vary with $p$. This dimension is called the {\bf rank} of the $G$-structure. 
\end{definition}

Given a $G$-structure with connection $(P,\theta,\omega)$, we will say that two points $p,q\in P$ are {\bf locally equivalent} if there exist a local equivalence $\Phi: U_p \to U_q$ such that $\Phi(p) = q$. This defines 
an equivalence relation $\sim$ on $P$. Obviously, any invariant $I:P\to \Rr$ descends to a map on the quotient space $I:P/\sim\to \Rr$. In general, the quotient $P/\sim$ is not a nice space. However, in the fully regular case we find:

\begin{prop}
Let $(P,\theta,\omega)$ be a fully regular $G$-structure with connection $(P,\theta,\omega)$ of rank $d$. The quotient
\[X_{(\theta,\omega)} := P{/}\sim\]
has a smooth structure of dimension $d$ such that the quotient map $h: P \to X_{(\theta,\omega)}$ is a submersion. Moreover, there is a smooth proper $G$-action on $X_{(\theta,\omega)}$ for which $h: P \to X_{(\theta,\omega)}$ is $G$-equivariant.
\end{prop}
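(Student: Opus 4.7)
My plan is to build local charts on $X_{(\theta,\omega)}$ out of maximal families of independent invariants, to identify local equivalence classes with fibers of these charts via Cartan's equivalence method, and finally to descend the $G$-action.

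Around any $p_0\in P$, the fully regular hypothesis lets me choose invariants $I_1,\ldots,I_d\in\Inv(P,\theta,\omega)$ whose differentials span $\mathcal{I}_{p_0}$. Shrinking to a $G$-saturated open $U$ on which they remain independent, the map $h_U:=(I_1,\ldots,I_d)\colon U\to\Rr^d$ is a surjective submersion of constant rank $d$. Moreover, every invariant $I$ on $U$ is locally a smooth function of $I_1,\ldots,I_d$: by the previous proposition, $\d I$ is itself an invariant $1$-form, so $\d_p I\in \mathcal{I}_p$ at every $p\in U$, and the claim then follows from the constant-rank theorem applied to $h_U$.

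The heart of the argument is to show that on $U$ one has $q_1\sim q_2$ if and only if $h_U(q_1)=h_U(q_2)$. The forward implication is immediate from the definition of invariant. For the converse, the equality of all invariants at $q_1$ and $q_2$ — in particular the torsion $T$, the curvature $R$, and all their iterated coframe derivatives, which together make up the structure functions of the coframe $(\theta,\omega)$ on $P$ — is precisely the hypothesis for Cartan's equivalence theorem for coframes, yielding a local diffeomorphism $\Phi$ with $\Phi(q_1)=q_2$, $\Phi^*\theta=\theta$ and $\Phi^*\omega=\omega$. Such $\Phi$ is automatically $G$-equivariant: the fundamental vector fields $\tilde\al$ are uniquely characterized by $\theta(\tilde\al)=0$ and $\omega(\tilde\al)=\al$, so they are preserved by $\Phi$, which therefore commutes with the $G$-action. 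I expect this to be the main technical obstacle, because it is the place where the full strength of fully regularity really enters, through the application of Cartan's method.

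With this characterization in hand, the $h_U$ descend to bijections $h(U)\to h_U(U)\subset\Rr^d$, and overlapping charts are related by smooth transitions, since on an overlap their defining invariants are smooth functions of each other by the previous paragraph. This yields a $d$-dimensional smooth atlas on $X_{(\theta,\omega)}$ making $h$ a submersion. The $G$-action on $P$ descends to $X_{(\theta,\omega)}$ because every local self-equivalence is $G$-equivariant; in the local coordinates given by $h_U$ the action reads $g\cdot(I_1(p),\ldots,I_d(p))=(I_1(p\cdot g),\ldots,I_d(p\cdot g))$, which is smooth because each $I_i\circ R_g$ is again an invariant (local equivalences commute with right translation by $g$). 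Finally, properness of the induced action on $X_{(\theta,\omega)}$ follows from properness on $P$ together with $G$-equivariance of the surjective submersion $h$, using local sections of $h$ to lift compact sets.
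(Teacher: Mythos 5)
Your proof is correct and follows essentially the same route as the paper: the chart-by-independent-invariants construction combined with Cartan's equivalence method for the coframe $(\theta,\omega)$ is precisely the content of \cite[Prop 2.7]{FernandesStruchiner1}, which the paper's proof simply cites, and your descent of the $G$-action (smoothness because $h$ is a submersion, properness inherited from $P$) matches the paper's argument verbatim. The one place you supply more detail than the paper --- upgrading a coframe equivalence to a $G$-equivariant equivalence on a $G$-saturated domain --- is right in spirit, though the last step should be the extension $\bar\Phi(pg):=\Phi(p)g$ to the saturation, whose well-definedness only needs the local commutation with the flows of the $\tilde{\al}$ that you established.
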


\begin{proof}
The fact that $X_{(\theta,\omega)}$ is a manifold follows from \cite[Prop 2.7]{FernandesStruchiner1} applied to the coframe $(\theta,\omega)$.  Observe that if $p\sim q$ and $g\in G$ then $pg\sim qg$, so the $G$-action descends to $X_{(\theta,\omega)}$ making the quotient map $h: P \to X_{(\theta,\omega)}$ a $G$-equivariant map. Since this map is a submersion, the action of $G$ on $X_{(\theta,\omega)}$ is smooth. Since the $G$-action on $P$ is proper, so is the $G$-action on $X_{(\theta,\omega)}$.
\end{proof}

\begin{definition}
The manifold $X_{(\theta,\omega)}$ is called the \textbf{classifying manifold} and the map $h: P \to X_{(\theta,\omega)}$ is called the \textbf{classifying map} of the fully regular $G$-structure with connection $(P,\theta,\omega)$.
\end{definition}

Obviously, one has an identification between invariant functions and functions on the classifying manifold:
\[ h^*:C^\infty(X_{(\theta,\omega)}) \diffto \Inv(\theta,\omega). \]
On the other hand, for any $G$-representation $V$, one can identify the $G$-equivariant invariant maps $I:P\to V$ with the $G$-equivariant smooth maps $X_{(\theta,\omega)}\to V$. 

We will see later other reasons for referring to $X_{(\theta,\omega)}$ as the \emph{classifying} manifold.

\subsection{Classifying algebroid}

We saw in the previous paragraph that for a fully regular $G$-structure with connection $(P,\theta,\omega)$ the invariant functions are in 1:1 correspondence with the functions on the classifying manifold $X_{(\theta,\omega)}$. It is natural to wonder what objects corresponds to the invariant differential forms $\Omega^k(P,\theta,\omega)$.

For this we observe that the coframe $(\theta,\omega)$ yields a vector bundle map
\[
\xymatrix@C=15pt{TP\ar[d] \ar[rr]^---{H=(\theta,\omega)} & & A_{(\theta,\omega)} \ar[d]\\
P \ar[rr]_h && X_{(\theta,\omega)}}
\]
where $A_{(\theta,\omega)}:=X_{(\theta,\omega)}\times(\Rr^n\oplus\gg)\to X_{(\theta,\omega)}$ is the trivial vector bundle. Since $H$ is a fiberwise isomorphism we obtain an injective pullback map
\[ H^*:\Omega^k(A_{(\theta,\omega)})\to \Omega^k(P), \]
where $\Omega^k(A_{(\theta,\omega)})$ denotes the sections of the bundle $\wedge^k A_{(\theta,\omega)}^*\to X_{(\theta,\omega)}$.

\begin{prop}
The image of the pullback map $H^*$ are the invariant forms, so one has an isomorphism:
\[ H^*:\Omega^k(A_{(\theta,\omega)}) \diffto \Omega^k(P,\theta,\omega). \]
\end{prop}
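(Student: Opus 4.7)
The plan is to prove both inclusions separately; injectivity of $H^*$ is already implicit in the fact that $H$ is a fiberwise isomorphism, so I only need to identify the image with the invariant forms. First I would check that $\operatorname{image}(H^*)\subseteq \Omega^k(P,\theta,\omega)$. Since $A_{(\theta,\omega)}=X_{(\theta,\omega)}\times(\Rr^n\oplus\gg)$ is trivial, fixing a basis $\{e^I\}$ of $\wedge^k(\Rr^n\oplus\gg)^*$ lets me write any $\alpha\in\Omega^k(A_{(\theta,\omega)})$ as $\alpha=\sum_I f_I\, e^I$ with $f_I\in C^\infty(X_{(\theta,\omega)})$. Its pullback then takes the form $H^*\alpha=\sum_I (h^*f_I)\,\eta_I$, where $\eta_I$ is the corresponding wedge product of components of $\theta$ and $\omega$. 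Since $h^*f_I\in\Inv(\theta,\omega)$ by the previously noted isomorphism $h^*:C^\infty(X_{(\theta,\omega)})\diffto \Inv(\theta,\omega)$, and since $\theta$ and $\omega$ are themselves invariant, each $\eta_I$ is invariant, whence so is $H^*\alpha$.

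For the reverse inclusion, I would exploit that $(\theta,\omega)$ is a coframe on $P$ to expand any $\Omega\in\Omega^k(P)$ uniquely as $\Omega=\sum_I g_I\,\eta_I$ with $g_I\in C^\infty(P)$. If $\Omega$ is invariant, then for any local self-equivalence $\Phi$ of $(P,\theta,\omega)$ the defining relations $\Phi^*\theta=\theta$ and $\Phi^*\omega=\omega$ give $\Phi^*\eta_I=\eta_I$; combined with $\Phi^*\Omega=\Omega$, uniqueness of the coframe expansion forces $\Phi^*g_I=g_I$. Hence each $g_I$ lies in $\Inv(\theta,\omega)$, and applying $(h^*)^{-1}$ produces $f_I\in C^\infty(X_{(\theta,\omega)})$ with $g_I=h^*f_I$, so that $\Omega=H^*\bigl(\sum_I f_I\, e^I\bigr)$ sits in the image.

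The step I expect to require the most care is precisely the one that implicitly uses the fully regular hypothesis: the passage from an invariant smooth function on $P$ to a smooth function on the quotient $X_{(\theta,\omega)}$. This is supplied by the already established isomorphism $h^*:C^\infty(X_{(\theta,\omega)})\diffto \Inv(\theta,\omega)$, which rests on $h$ being a submersion onto a smooth manifold. Once this structural fact is invoked, the rest of the argument reduces to bookkeeping with the uniqueness of the coframe expansion.
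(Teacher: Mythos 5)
Your proof is correct and follows essentially the same route as the paper: the converse inclusion is argued identically (expand in the coframe basis, observe the coefficients are invariant functions, and descend them via $h^*:C^\infty(X_{(\theta,\omega)})\diffto \Inv(\theta,\omega)$). The only cosmetic difference is in the forward inclusion, where the paper argues abstractly from $H\circ\d\Psi=H$ while you expand $H^*\alpha$ in the basis $\{\eta_I\}$ and invoke invariance of $\theta$, $\omega$ and of pulled-back functions; the two arguments are equivalent.
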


\begin{proof}
Let $\Psi:P\to P$ be a local equivalence. 
Since
\[ h\circ\Psi=h,\quad \Psi^*\theta=\theta,\quad \Psi^*\omega=\omega, \]
it follows that
\[ H\circ \d\Psi=H. \]
Hence, if $\Omega\in\Omega^k(A_{(\theta,\omega)})$ we have:
\[ \Psi^*H^*\Omega=H^*\Omega. \]
Since $\Psi$ was an arbitrary local equivalence, we conclude that $H^*\Omega\in \Omega^k(P,\theta,\omega)$.

Conversely, any invariant form $\widetilde{\Omega}\in\Omega^k(P,\theta,\omega)$ can be expressed in terms of the base of $k$-forms provided by the coframe $(\theta,\omega)$ as
\[ \widetilde{\Omega}=\sum_{\substack{i_1,\dots,i_n\\j_1\dots,j_r}} \widetilde{\Omega}_{i_1,\dots,i_n,j_1\dots,j_r}
\theta^{i_1}\wedge\cdots\wedge\theta^{i_n}\wedge\omega^{j_1}\wedge\cdots\wedge\omega^{j_r}, \]
where the coefficients are invariant functions. Then we must have:
\[ \widetilde{\Omega}_{i_1,\dots,i_n,j_1\dots,j_r}=\Omega_{i_1,\dots,i_n,j_1\dots,j_r}\circ h, \]
for unique smooth functions $\Omega_{i_1,\dots,i_n,j_1\dots,j_r}$ on $X_{(\theta,\omega)}$. Hence, we conclude that $\widetilde{\Omega}=H^*\Omega$, with
\[ \Omega:=\sum_{\substack{i_1,\dots,i_n\\j_1\dots,j_r}} \widetilde{\Omega}_{i_1,\dots,i_n,j_1\dots,j_r}
u^{i_1}\wedge\cdots\wedge u^{i_n}\wedge \al^{j_1}\wedge\cdots\wedge \al^{j_r}, \]
where $\{u^i,\al^j\}\subset\Omega^1(A_{(\theta,\omega)})$ is the unique base of trivializing sections of $A_{(\theta,\omega)}^*$ such that $H^*u^i=\theta^i$ and $H^*\al^j=\omega^i$.
\end{proof}

It follows from the previous proposition that there is a linear differential operator
\[ \d_A:\Omega^\bullet(A_{(\theta,\omega)})\to \Omega^{\bullet+1}(A_{(\theta,\omega)}), \]
characterized by:
\[ H^*\d_A=\d H^*. \]
Notice that $\d_A$ is a differential on $\Omega^\bullet(A_{(\theta,\omega)})$:
\[
\d_A^2=0, \qquad \d_A (\eta\wedge\mu)=\d_A\eta\wedge\mu+(-1)^{\deg\eta}\eta\wedge\d_A\mu.
\]
This means that the vector bundle $A_{(\theta,\omega)}\to X_{(\theta,\omega)}$ carries a Lie algebroid structure. The anchor $\rho_A$ is defined on a section $s\in\Gamma(A_{(\theta,\omega)})$ by:
\begin{equation}
\label{eq:anchor:differential} 
\Lie_{\rho_A(s)} f=\langle s,\d_A f\rangle\qquad (f\in C^\infty(X_{(\theta,\omega)})),
\end{equation}
and the Lie bracket $[\cdot,\cdot]_A$ of sections $s_1,s_2\in\Gamma(A_{(\theta,\omega)})$ is determined by requiring that for any $\eta\in \Omega^1(A_{(\theta,\omega)})$:
\begin{equation}
\label{eq:bracket:differential} 
\langle [s_1,s_2]_A,\eta\rangle=\Lie_{\rho(s_1)} \langle s_2,\eta\rangle-\Lie_{\rho(s_2)} \langle s_1,\eta\rangle
-\d_A\eta(s_1,s_2),
\end{equation}
For background on Lie algebroids and Lie groupoids see \cite{CrainicFernandes:lectures}.

\begin{definition}
Let $(P,\theta,\omega)$ be a fully regular $G$-structure with connection. The Lie algebroid $(A_{(\theta,\omega)},\rho_A,[\cdot,\cdot]_A)$ is called the \textbf{classifying Lie algebroid} of $(P,\theta,\omega)$.
\end{definition}

In the next paragraph we will give a more explicit form of the classifying Lie algebroid. For now we observe that, by definition, $H=(\theta,\omega)$ intertwines the de Rham differential and the Lie algebroid differential. Hence, we have:

\begin{corol}
\label{cor:classify:algbrd}
If $(P,\theta,\omega)$ is a fully regular $G$-stucture, then the coframe $(\theta,\omega)$ together with $h:P\to X_{(\theta,\omega)}$ form a Lie algebroid map:
\[
\xymatrix@C=15pt{TP\ar[d] \ar[rr]^---{H=(\theta,\omega)} & & A_{(\theta,\omega)} \ar[d]\\
P \ar[rr]_h && X_{(\theta,\omega)}}
\]
\end{corol}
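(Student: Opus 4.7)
The plan is to appeal to the well-known dual characterization of Lie algebroid morphisms: a vector bundle map $\Phi : A \to B$ covering a smooth map $f : M \to N$ between Lie algebroids is a morphism of Lie algebroids if and only if the induced pullback $\Phi^{*} : \Omega^{\bullet}(B) \to \Omega^{\bullet}(A)$ on algebroid forms commutes with the respective differentials, i.e., $\Phi^{*}\circ\d_{B}=\d_{A}\circ\Phi^{*}$ (see \cite{CrainicFernandes:lectures}). In the present setting one of the two algebroids is the tangent algebroid $TP$, so $\d_{A}=\d$ is simply the de Rham differential, and the required identity reads
\[
H^{*}\circ \d_{A}=\d\circ H^{*}.
\]

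First, I would point out that $H=(\theta,\omega)$ is indeed a well-defined vector bundle map covering $h$: fiberwise it is the linear isomorphism $T_pP\to \Rr^n\oplus\gg$ furnished by the coframe, composed with the trivialization of $A_{(\theta,\omega)}$ over $h(p)$. Thus $H^{*}$ is a well-defined map of algebras of sections, $\Omega^{\bullet}(A_{(\theta,\omega)})\to \Omega^{\bullet}(P)$, whose image is exactly the subcomplex of invariant forms (by the preceding proposition).

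Next, I would observe that the required commutation is not something to be verified, but rather the very characterization used to \emph{define} $\d_A$: the operator $\d_A$ on $\Omega^{\bullet}(A_{(\theta,\omega)})$ was introduced by declaring that $H^{*}\d_A=\d H^{*}$, which makes sense precisely because $H^{*}$ is an isomorphism onto $\Omega^{\bullet}(P,\theta,\omega)$ and the latter is stable under $\d$. The anchor $\rho_A$ and bracket $[\cdot,\cdot]_A$ were then derived from $\d_A$ via formulas \eqref{eq:anchor:differential} and \eqref{eq:bracket:differential}, which guarantees that $\d_A$ is the Chevalley--Eilenberg differential of the resulting Lie algebroid. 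Combining these two facts, the hypothesis of the dual characterization is satisfied tautologically.

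The only mildly delicate point, and the one I would state explicitly, is the justification that commutation of pullbacks with differentials really is equivalent to being a Lie algebroid morphism in this form; I would either cite the reference above or unwind it by checking on degree $0$ (giving compatibility of anchors via \eqref{eq:anchor:differential}) and degree $1$ (giving compatibility of brackets via \eqref{eq:bracket:differential}), both of which then follow directly from the identity $H^{*}\d_A=\d H^{*}$ and the fact that $H$ is a fiberwise isomorphism. This shows that $H$ is a Lie algebroid morphism, proving the corollary.
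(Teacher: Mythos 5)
Your proposal is correct and follows essentially the same route as the paper: the corollary is stated there as an immediate consequence of the fact that $\d_A$ was \emph{defined} by the identity $H^*\d_A=\d H^*$, combined with the standard dual characterization of Lie algebroid morphisms via commutation of pullbacks with the differentials. Your additional unwinding of the degree $0$ and degree $1$ cases is a reasonable elaboration of the same argument.
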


The previous corollary shows, in particular, that $H$ preserves the anchors:
\[ \rho_A\circ H=\d h. \]
Since $h:P\to X_{(\theta,\omega)}$ is a submersion, this implies that $\rho_A$ is surjective. In other words, the classifying Lie algebroid of a fully regular $G$-structure with connection is a \emph{transitive Lie algebroid}.

\subsection{Canonical form of the classifying algebroid}
\label{sec:canonical:form}

Let $(P,\theta,\omega)$ be a fully regular $G$-structure with connection. The torsion and the curvature of the connection, being $G$-equivariant invariant maps, determine $G$-equivariant maps on the classifying manifold:
\[ T: X_{(\theta,\omega)} \to \hom(\wedge^2\Rr^n, \Rr^n), \qquad R : X_{(\theta,\omega)} \to \hom(\wedge^2\Rr^n,\gg).\]
Denote the infinitesimal $G$-action on the classifying manifold by $\psi:\gg\to \X(X_{(\theta,\omega)})$. If we identify functions on $X_{(\theta,\omega)}$ with invariant functions on $P$, then this infinitesimal action amounts to the coframe derivative $\frac{\partial }{\partial \omega}$:
\begin{equation}
\label{eq:g:coframe:derivative} 
h^*(\Lie_{\psi(\al)} f)=\frac{\partial(f\circ h)}{\partial \omega}(\al)\qquad (\al\in\gg).
\end{equation}
We can also view the infinitesimal action as a bundle map:
\[ \psi:X_{(\theta,\omega)}\times\gg\to TX_{(\theta,\omega)}, \]
which is $G$-equivariant:
\[ \psi(x,\alpha)\cdot g=\psi(xg,\Ad_{g^{-1}}\alpha), \]
for any $x\in X_{(\theta,\omega)}$, $\al\in\gg$ and $g\in G$.

Similarly, there is a linear map $F:\Rr^n\to\X(X_{(\theta,\omega)})$ associated with the coframe derivative $\frac{\partial }{\partial \theta}$:
\begin{equation}
\label{eq:Rn:coframe:derivative} 
h^*(\Lie_{F(u)} g) =\frac{\partial (f\circ h)}{\partial \theta}(u)\qquad (u\in\Rr^n).
\end{equation}
Again, we can view this map as a bundle map:
\[ F:X_{(\theta,\omega)}\times\Rr^n\to TX_{(\theta,\omega)}, \]
which is $G$-equivariant:
\[ F(x,v)\cdot g=F(xg,g^{-1}\cdot v), \]
for any $x\in X_{(\theta,\omega)}$, $v\in\Rr^n$ and $g\in G$. Note, however, that $F$ is not associated with any infinitesimal action of $\Rr^n$. Instead, we have:

\begin{prop}
\label{prop:classify:algbrd}
If $(P,\theta,\omega)$ is a fully regular $G$-structure with connection, then the structure maps of its classifying Lie algebroid $A_{(\theta,\omega)}$ take the following form:
\begin{enumerate}[(i)]
\item The anchor $\rho:A_{(\theta,\omega)}\to TX_{(\theta,\omega)}$ is the bundle map $\rho(u,\al)  = F(u) + \psi(\al)$;
\item The bracket is defined on constant sections $(u,\al), (v, \be) \in \Gamma(A_{(\theta,\omega)})$ by
\[[(u,\al), (v, \be)] = (\al\cdot v - \be\cdot u - T(u,v), [\al,\be]_\gg - R(u,v)).\]
\end{enumerate}
\end{prop}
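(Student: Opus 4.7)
The plan is to derive both parts from Corollary \ref{cor:classify:algbrd} --- which says that $H = (\theta,\omega)$ intertwines $\d$ and $\d_A$ --- together with the structure equations \eqref{eq:structure} and the defining identities \eqref{eq:anchor:differential} and \eqref{eq:bracket:differential}. Since $A_{(\theta,\omega)} = X_{(\theta,\omega)} \times (\Rr^n\oplus\gg)$ is trivial, it suffices to compute both the anchor and the bracket on constant sections. For $(u,\al) \in \Rr^n\oplus\gg$, let $\widetilde{s}_{(u,\al)}\in\X(P)$ denote the unique vector field with $\theta(\widetilde{s}_{(u,\al)})\equiv u$ and $\omega(\widetilde{s}_{(u,\al)})\equiv \al$; this is well defined because $H$ is a fiberwise isomorphism, and then $H(\widetilde{s}_{(u,\al)})$ at $p$ equals the constant section $(u,\al)$ evaluated at $h(p)$.

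For part (i), fix $f\in C^\infty(X_{(\theta,\omega)})$. Corollary \ref{cor:classify:algbrd} gives $H^*\d_A f = \d(f\circ h)$, and evaluating on $\widetilde{s}_{(u,\al)}$ together with the coframe expansion of $\d(f\circ h)$ yields
\[
\langle (u,\al),\d_A f\rangle\circ h = \frac{\partial(f\circ h)}{\partial\theta}(u) + \frac{\partial(f\circ h)}{\partial\omega}(\al).
\]
By \eqref{eq:Rn:coframe:derivative} and \eqref{eq:g:coframe:derivative} the right-hand side equals $h^*(\Lie_{F(u)+\psi(\al)} f)$. Since $h$ is a surjective submersion, $h^*$ is injective on functions, and comparison with \eqref{eq:anchor:differential} forces $\rho(u,\al) = F(u)+\psi(\al)$.

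For part (ii), set $\widetilde{s}_1 = \widetilde{s}_{(u,\al)}$ and $\widetilde{s}_2 = \widetilde{s}_{(v,\be)}$. For any constant section $\eta$ of $A^*$ the pairings $\langle(u,\al),\eta\rangle$ and $\langle(v,\be),\eta\rangle$ are constant functions on $X_{(\theta,\omega)}$, so \eqref{eq:bracket:differential} reduces to $\langle [(u,\al),(v,\be)], \eta\rangle = -\d_A\eta((u,\al),(v,\be))$. Testing with $\eta = u^* \in (\Rr^n)^*\subset A^*$, the identity $H^*(\d_A u^*) = \d\langle u^*,\theta\rangle$ together with the first structure equation in \eqref{eq:structure} gives
\[
\d\langle u^*,\theta\rangle(\widetilde{s}_1,\widetilde{s}_2) = \langle u^*,\, T(u,v) - \al\cdot v + \be\cdot u\rangle,
\]
so the $\Rr^n$-component of the bracket is $\al\cdot v - \be\cdot u - T(u,v)$. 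The analogous computation with $\lambda^*\in\gg^*\subset A^*$ and the second structure equation produces $\d\langle\lambda^*,\omega\rangle(\widetilde{s}_1,\widetilde{s}_2) = \langle \lambda^*,\, R(u,v) - [\al,\be]_\gg\rangle$, so the $\gg$-component is $[\al,\be]_\gg - R(u,v)$. Combining the two components gives the asserted formula.

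The only real point to watch is the sign conventions: evaluating $\d\theta$ and $\d\omega$ on the lifts requires $(\omega\wedge\theta)(\widetilde{s}_1,\widetilde{s}_2) = \al\cdot v - \be\cdot u$ and $(\omega\wedge\omega)(\widetilde{s}_1,\widetilde{s}_2) = [\al,\be]_\gg$, and these have to be propagated carefully through the structure equations. Beyond this bookkeeping there is no genuine obstacle; both parts reduce to direct substitution into \eqref{eq:anchor:differential} and \eqref{eq:bracket:differential}.
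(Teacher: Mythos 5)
Your proof is correct and follows essentially the same route as the paper's: both parts are obtained by pulling back along $H=(\theta,\omega)$ via $H^*\d_A=\d H^*$, reducing \eqref{eq:anchor:differential} to the coframe-derivative identities \eqref{eq:Rn:coframe:derivative}--\eqref{eq:g:coframe:derivative}, and reducing \eqref{eq:bracket:differential} on constant sections to $-\d_A\eta(s_1,s_2)$ evaluated against the structure equations. The only cosmetic difference is that you test against arbitrary covectors $u^*,\lambda^*$ where the paper uses a fixed dual basis $u^i,\al^j$; your sign bookkeeping for $\omega\wedge\theta$ and $\omega\wedge\omega$ matches the paper's conventions.
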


\begin{proof}
First, from expression \eqref{eq:anchor:differential} for the anchor, we find that:
\begin{align*}
h^*(\Lie_{\rho_A(s)} f) &=h^*\langle s,\d_A f\rangle= \langle H^*s,H^*\d_A f\rangle
=\langle H^*s,\d H^* f\rangle\\
&=\Big\langle H^*s,  \frac{\partial (f\circ H)}{\partial \theta} \theta + \frac{\partial (f\circ H)}{\partial \omega} \omega\Big\rangle
\end{align*}
Hence, if $s=(u,\al)$ is a constant section with $u\in\Rr^n$ and $\al\in\gg$, using \eqref{eq:g:coframe:derivative}  and \eqref{eq:Rn:coframe:derivative} we find:
\begin{align*} 
h^*(\Lie_{\rho_A(u+\al)} f)&=\Big\langle H^*(u,\al), \frac{\partial (f\circ H)}{\partial \theta} \theta + \frac{\partial (f\circ H)}{\partial \omega} \omega\Big\rangle\\
&=\frac{\partial(f\circ h,)}{\partial \theta}(u)+\frac{\partial(f\circ h)}{\partial \omega}(\al)\\
&=h^*(\Lie_{F(u)} f)+h^*(\Lie_{\psi(\al)} f)=h^*(\Lie_{F(u)+\psi(\al)} f).
\end{align*}
This proves the expression for the anchor.

To find the expression for the Lie bracket, fix a basis $\{u_i\}$ for $\Rr^n$ and a basis $\{\al_j\}$ for $\gg$. We denote by the same letters the corresponding constant sections of $A_{(\theta,\omega)}$ and with a lower index the corresponding dual sections $u^i,\al^j\in\Omega^1(A_{(\theta,\omega)})$. Notice that $H^*u^i=\theta^i$ and $H^*\al^j=\omega^j$, the components of the tautological and connections form relative to the fixed basis. Then, from expression \eqref{eq:bracket:differential} for the bracket we find:
\begin{align*} 
h^*\langle [(u,\al),(v, \be)]_A,u^i\rangle&=-h^*(\d_Au^i((u,\al),(v, \be)))\\
&=-(H^*\d_Au^i)(H^*(u,\al),H^*(v,\be))\\
&=-(\d H^*u^i)(H^*(u,\al),H^*(v,\be))\\
&=-(\d\theta^i)(H^*(u,\al),H^*(v,\be))\\
&=-(T ( \theta \wedge \theta) - \omega \wedge \theta)^i(H^*(u,\al),H^*(v,\be))\\
&=h^*(\al\cdot v - \be\cdot u - T(u,v))^i=h^*\langle \al\cdot v - \be\cdot u - T(u,v),u^i\rangle,
\end{align*}
where we used the 1st structure equation \eqref{eq:structure}. Similarly, we find:
\begin{align*} 
h^*\langle [(u,\al),(v, \be)]_A,\al^j\rangle&=-h^*(\d_A\al^j((u,\al),(v, \be)))\\
&=-(H^*\d_A\al^j)(H^*(u,\al),H^*(v,\be))\\
&=-(\d H^*\al^j)(H^*(u,\al),H^*(v,\be))\\
&=-(\d\omega^j)(H^*(u,\al),H^*(v,\be))\\
&=-(R ( \theta \wedge \theta) - \omega\wedge\omega)^j(H^*(u,\al),H^*(v,\be))\\
&=h^*([\al,\be]_\gg - R(u,v))^j=h^*\langle [\al,\be]_\gg - R(u,v),\al^j\rangle,
\end{align*}
where we used the 2nd structure equation \eqref{eq:structure}.
\end{proof}

We will refer to the expressions for the bracket and anchor given in the previous proposition as the \textbf{canonical form} of the classifying algebroid. This explicit form shows that the action algebroid associated with the infinitesimal $\gg$-action on $X_{(\theta,\omega)}$ is a subalgebroid of the classifying algebroid:
\[ X_{(\theta,\omega)}\times\gg\to A_{(\theta,\omega)}, (x,\al)\mapsto (x,(0,\al)). \]

The canonical form also shows that we have a $G$-action on the classifying algebroid by algebroid automorphisms, covering the $G$-action on $X_{(\theta,\omega)}$:
\[
\vcenter{
\xymatrix@C=15pt{G\times A_{(\theta,\omega)}\ar[d] \ar[r]  & A_{(\theta,\omega)} \ar[d]\\
G\times X_{(\theta,\omega)}  \ar[r] & X_{(\theta,\omega)}}}
\qquad (x,u,\al)\cdot g:=(xg,g^{-1}\cdot u,\Ad_{g^{-1}}\al).
\]
In particular, the coframe together with the classifying map give a \emph{$G$-equivariant} Lie algebroid map:
\[
\xymatrix@C=15pt{TP\ar[d] \ar[rr]^---{H=(\theta,\omega)} & & A_{(\theta,\omega)} \ar[d]\\
P \ar[rr]_h && X_{(\theta,\omega)}}
\]

\subsection{Symmetries}
We have the following natural notions of symmetries:

\begin{definition}
Let $(P,\theta,\omega)$ be a $G$-structure with connection. A \textbf{symmetry} of $(P,\theta,\omega)$ is a self-equivalence $\Phi:P\to P$. An \textbf{infinitesimal symmetry} of $(P,\theta,\omega)$ is a vector field $\xi \in \X(P)$ such that $\Lie_{\xi}\theta=\Lie_{\xi}\omega=0$. A \textbf{germ of infinitesimal symmetry} at $p$ is a germ of a vector field $\xi \in \X(U)$ defined in a neighborhood $U$ of $p$ that satisfies $\Lie_{\xi}\theta|_U=\Lie_{\xi}\omega|_U=0$
\end{definition}

The infinitesimal symmetries of a $G$-structure with connection $(P,\theta,\omega)$ form a Lie subalgebra $\X(P,\theta,\omega)\subset\X(P)$. The germs of infinitesimal symmetry at $p$ also form a Lie algebra, denoted $\X(P,\theta,\omega)_p$. There is an injective restriction homomorphism -- see \cite[Lemma 5.9]{FernandesStruchiner1}:
\[ \X(P,\theta,\omega)\to \X(P,\theta,\omega)_p. \]
On the global side, the symmetries of $(P,\theta,\omega)$ form a subgroup $\Diff(P,\theta,\omega)\subset \Diff(P)$ and we have the following classical result -- see, e.g., \cite[Thm I.5.1]{Kobayashi}:

\begin{theorem}
The group $\Diff(P,\theta,\omega)$ of symmetries of a $G$-structure with connection is a Lie group with Lie algebra the subspace of  $\X(P,\theta,\omega)$ generated by the complete vector fields.
\end{theorem}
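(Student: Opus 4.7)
The strategy is to recognize that the pair $(\theta,\omega)$ is an absolute parallelism on $P$ (since at every $p\in P$ the map $H_p=(\theta,\omega)_p:T_pP\to\Rr^n\oplus\gg$ is an isomorphism), and that symmetries of $(P,\theta,\omega)$ are exactly the automorphisms of this parallelism, while infinitesimal symmetries are the infinitesimal automorphisms. The statement then becomes the classical theorem of Kobayashi on automorphism groups of absolute parallelisms, to which I appeal, sketching its proof in the present setting.

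First I would prove the rigidity statement: a symmetry $\Phi\in\Diff(P,\theta,\omega)$ on a connected component of $P$ is completely determined by its value at a single point $p_0$. If $\{E_I\}$ is the global frame on $P$ dual to $(\theta,\omega)$, then the condition $\Phi^*\theta=\theta$, $\Phi^*\omega=\omega$ is equivalent to $\Phi_*E_I=E_I$ for all $I$, hence $\Phi$ intertwines the flows of the $E_I$. Since these flows generate a local transitive pseudogroup on each connected component, two symmetries that agree at $p_0$ agree on a whole connected component. This gives an injection $\mathrm{ev}_{p_0}:\Diff(P,\theta,\omega)\hookrightarrow P$ on each connected component, and one endows $\Diff(P,\theta,\omega)$ with the unique smooth structure making this evaluation map a smooth embedding; verifying that composition and inversion are smooth is then routine and uses again the defining relations $\Phi_*E_I=E_I$.

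Next I would identify infinitesimal symmetries. A vector field $\xi\in\X(P)$ satisfies $\Lie_\xi\theta=\Lie_\xi\omega=0$ iff, writing $\xi=f^I E_I$, Cartan's formula together with the structure equations \eqref{eq:structure} yields
\[
\d f^I = -i_\xi \d(\theta,\omega)^I,
\]
which is a system of linear first-order ODEs for the coefficients $f^I$ along the integral curves of each $E_J$. Consequently $\xi$ is determined by the values $f^I(p_0)$, proving that $\X(P,\theta,\omega)$ is a finite-dimensional Lie algebra of dimension at most $\dim P = n+\dim G$.

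Finally I would match the Lie algebra of $\Diff(P,\theta,\omega)$ with the span of complete infinitesimal symmetries. On one hand, every smooth one-parameter subgroup of $\Diff(P,\theta,\omega)$ is the flow of some $\xi\in\X(P,\theta,\omega)$, which is therefore complete; conversely, by the previous rigidity and ODE argument, any complete $\xi\in\X(P,\theta,\omega)$ integrates to a one-parameter subgroup of $\Diff(P,\theta,\omega)$. Thus $\mathrm{Lie}(\Diff(P,\theta,\omega))$ is precisely the span of the complete elements of $\X(P,\theta,\omega)$. The subtle point—and the main obstacle—is that a linear combination of two complete vector fields need not be complete in general; however, since $\X(P,\theta,\omega)$ is finite-dimensional and the corresponding one-parameter subgroups sit inside the Lie group $\Diff(P,\theta,\omega)$, their infinitesimal generators combine linearly inside the Lie algebra of that group, forcing the span of complete elements to itself consist of complete vector fields and to form a Lie subalgebra. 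This closure-under-Lie-algebra-operations step is the technical heart of Kobayashi's argument, and is exactly where the Lie group structure established in the first step is used.
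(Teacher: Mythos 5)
Your proposal is correct and matches the paper's treatment: the paper gives no proof of this statement, citing it as the classical theorem on automorphism groups of absolute parallelisms (Kobayashi, Thm I.5.1), and your sketch is precisely the standard argument behind that citation --- rigidity via the dual frame, injectivity of the evaluation map $\mathrm{ev}_{p_0}:\Diff(P,\theta,\omega)\to P$, finite-dimensionality of $\X(P,\theta,\omega)$, and identification of the Lie algebra with the complete infinitesimal symmetries. The one place where you lean entirely on the cited theorem rather than proving it is the assertion that the image of $\mathrm{ev}_{p_0}$ is an embedded submanifold (equivalently, that a smooth structure making $\mathrm{ev}_{p_0}$ an embedding exists and makes the group operations smooth), which is the technical heart of Kobayashi's argument; since the paper itself only cites this result, that omission is consistent with the paper.
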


For a general $G$-structure with connection it may be hard to relate the Lie algebra of infinitesimal symmetries $\X(P,\theta,\omega)$ and the Lie algebras of germs of infinitesimal symmetries $\X(P,\theta,\omega)_p$. Moreover, the later can depend on the point $p\in P$. 

However, for a fully regular $G$-structure with connection one can relate their infinitesimal symmetries to the isotropy Lie algebra of its classifying algebroid $A_{(\theta,\omega)}$. Since this is a transitive algebroid, its isotropy Lie algebras are all isomorphic. 

\begin{prop} 
\label{prop:inf:sym:coframe}
Let $(P,\theta,\omega)$ be a fully regular $G$-structure with connection and let $A_{(\theta,\omega)} \to X_{(\theta,\omega)}$ be its classifying Lie algebroid. Then the Lie algebra of germs of infinitesimal symmetries $\X(P,\theta,\omega)_p$ is isomorphic to the isotropy Lie algebra $\ker\rho_{h(p)}\subset A_{(\theta,\omega)}$. In particular,
\[  \dim \X(P,\theta,\omega)_p = \dim P - \dim X_{(\theta,\omega)}. \]
\end{prop}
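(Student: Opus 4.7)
The plan is to reduce the statement to the analogous result for coframes (i.e., for $\{e\}$-structures) from \cite{FernandesStruchiner1}. The key observation is that $(\theta,\omega)$ is itself a coframe on the manifold $P$, its infinitesimal symmetries as a coframe coincide exactly with the infinitesimal symmetries of the $G$-structure with connection (both are characterized by $\Lie_\xi\theta=\Lie_\xi\omega=0$), and the classifying algebroid constructed here agrees with the one from part I applied to this coframe. So the result reduces to the coframe case, where the analogous statement is already available.

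For a more direct argument, I would define a map
\[ \Psi: \X(P,\theta,\omega)_p \longrightarrow \ker\rho_{h(p)}, \qquad \Psi(\xi) = H_p(\xi_p) = (\theta_p(\xi_p),\omega_p(\xi_p)). \]
First I would verify that $\Psi$ lands in $\ker\rho_{h(p)}$: since the flow of an infinitesimal symmetry $\xi$ consists of local equivalences, it preserves every invariant function, so $\d h(\xi)=0$; as $H$ is a Lie algebroid morphism (Corollary \ref{cor:classify:algbrd}), $\rho\circ H=\d h$, which forces $\rho_{h(p)}(\Psi(\xi))=0$. For injectivity I would unwind $\Lie_\xi\theta=\Lie_\xi\omega=0$ via Cartan's formula and the structure equations \eqref{eq:structure} to obtain coupled first-order linear equations on the coframe components $(\theta(\xi),\omega(\xi))$. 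Because $(\theta,\omega)$ is a coframe, these become linear ODEs along the dual frame, and so $\xi$ is determined uniquely as a germ by its value at $p$, i.e.\ by $\Psi(\xi)$.

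For surjectivity, given $(u_0,\alpha_0)\in\ker\rho_{h(p)}$ I would integrate the same system with initial data $(u_0,\alpha_0)$. The relevant integrability conditions follow from $\d^2\theta=\d^2\omega=0$ and are equivalent to the Jacobi identities for the bracket on $A_{(\theta,\omega)}$ from Proposition \ref{prop:classify:algbrd}, hence hold automatically. The Lie algebra morphism property of $\Psi$ is then immediate from $H$ being a Lie algebroid map. Finally, the dimension formula follows from transitivity of $A_{(\theta,\omega)}$:
\[ \dim\ker\rho_{h(p)} = \dim A_{(\theta,\omega)}-\dim X_{(\theta,\omega)} = \dim P-\dim X_{(\theta,\omega)}. \]

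The main obstacle is surjectivity. The PDE system characterizing infinitesimal symmetries is overdetermined, so although its integrability conditions are satisfied, an arbitrary initial value in $\Rr^n\oplus\gg$ does not in general yield a well-defined germ; only values in $\ker\rho_{h(p)}$ do. Checking that this kernel condition is precisely the right compatibility and is preserved by the integration (so the resulting $\xi$ stays tangent to the fibers of $h$ near $p$) is the technical heart of the proof, and relies essentially on full regularity and on the transitivity of the classifying algebroid. This is also the step where it is cleanest to invoke the coframe result from \cite{FernandesStruchiner1} directly.
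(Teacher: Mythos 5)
Your proposal is correct and its core is exactly the paper's proof: the paper disposes of this proposition in one line by applying the coframe result \cite[Prop 5.7]{FernandesStruchiner1} to the coframe $(\theta,\omega)$, which is precisely your first paragraph (the identifications you check there — that infinitesimal symmetries of the coframe and of the $G$-structure with connection coincide, and that the classifying algebroid agrees with the one from part I — are the only content needed). Your subsequent direct argument via the map $\Psi(\xi)=H_p(\xi_p)$ is a sound unwinding of what that cited proposition does, but it is supplementary rather than a different route.
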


\begin{proof}
The proof follows from \cite[Prop 5.7]{FernandesStruchiner1} applied to the coframe $(\theta,\omega)$.
\end{proof}

Although for a fully regular $G$-structure with connection the Lie algebras $\X(P,\theta,\omega)_p$ are all isomorphic, we will see in the examples in the next section that, in general, one still has strict inclusions:
\[ \textrm{Lie}(\Diff(P,\theta,\omega)) \subsetneq \X(P,\theta,\omega)\subsetneq \X(P,\theta,\omega)_p. \]
We will give later conditions under which all these Lie algebras coincide and, moreover, the natural action of $\Diff(P,\theta,\omega)$ on $P$ is a proper and free action, with orbits the fibers of the classifying map $h$. When this is the case, the classifying map
\[ h:P\to X_{(\theta,\omega)} \] 
is a principal $\Diff(P,\theta,\omega)$-bundle and it follows that the classifying algebroid is isomorphic to the Atiyah algebroid of this principal bundle:
\[ A_{(\theta,\omega)}\simeq TP/H, \quad H:=\Diff(P,\theta,\omega). \]
In particular, we obtain a Lie groupoid integrating $A_{(\theta,\omega)}$, namely the gauge groupoid of the principal bundle $h:P\to X_{(\theta,\omega)}$:
\[ \G_{(\theta,\omega)}:=(P\times_HP)\tto X_{(\theta,\omega)}. \]
The $\s$-fibers of this groupoid are copies of $P$ and hence are themselves $G$-structures (with connections). This is an example of a \emph{$G$-structure groupoid} (with connection), which will be study in the next sections.

\section{Examples}
\label{sec:examples}

\subsection{Non-fully regular $G$-structures} We start by giving a class of $G$-structures which are not fully regular.

Consider a Riemannian manifold $(M,\eta)$. Its orthogonal frame bundle
\[ \OM=\{p:(\Rr^n,\langle\cdot,\cdot\rangle)\to (T_xM,\eta_x) ~|~\textrm{linear isometry}\}, \]
is a $\O_n(\Rr)$-structure with connection $\omega$ (the Levi-Civita connection). The germs of infinitesimal symmetries of $(\OM,\theta,\omega)$ at a point $u\in \OM$ can be identified with the space of germs of Killing vector fields of $\eta$ at the point $x=\pi(p)$ :
\[ \X(\OM,\theta,\omega)_{p}=\{X\in\X(M):\Lie_X\eta=0 \textrm{ on some open $U\subset M$ containing $\pi(p)$}\}. \]
If this $\O_n(\Rr)$-structure is fully regular then, by Proposition \ref{prop:inf:sym:coframe}, we must have for any point  $p\in \OM$
\[ \dim \X(\OM,\theta,\omega)_{p}=\dim \OM-\dim X_{(\theta,\omega)}. \]
It is easy to construct Riemannian manifolds $(M,\eta)$ for which there are points $p_1,p_2\in \OM$ such that:
\[ \dim \X(\OM,\theta,\omega)_{p_1}\not= \dim\X(\OM,\theta,\omega)_{p_2}. \]

For example, let $M=\Rr$ with coordinate $t$, and consider the Riemannian structure
\[
\eta_t(v,w)=
\begin{cases}
vw & \text{for } t\le 0,\\
(1+f(t))vw & \text{for } t> 0,
\end{cases}
\]
where $f$ is a smooth function such that $f'(t)>0$ if $t>0$, and $f$ and all its derivatives vanish at $t=0$. Then one finds that a (local) Killing vector field for this metric must vanish for $t>0$ and is constant for $t<0$. It follows that:
\[ \dim \X(\OM,\theta,\omega)_p=
\begin{cases}
1 & \text{if } \pi(p)<0,\\
0 & \text{if } \pi(p)> 0.
\end{cases}
\]

\subsection{Generalized space forms}
\label{ex:space:form}
Let $(M,\eta)$ be a pseudo-Riemannian manifold of signature $(p,n-p)$. The associated $O(p,n-p)$-frame bundle
\[ \OM=\{p:(\Rr^n,\langle\cdot,\cdot\rangle_{(p,n-p)})\to (T_xM,\eta_x) ~|~\textrm{linear isometry}\}, \]
is a $O(p,n-p)$ structure with connection (the Levi-Civita connection). As we saw in the previous example, this need not be fully regular. We recall the following result -- see, e.g., \cite{KN1,KN2} for the Riemannnian case and \cite[Section 4.5]{Kriele99} for the pseudo-Riemannian case:

\begin{prop}
Let $(M,\eta)$ be a pseudo-Riemannian manifold of constant scalar curvature. Then for any pair of orthonormal frames $p_1,p_2\in \OM$ there is a local isometry $\varphi$ of $(M,\eta)$ with $\widetilde{\varphi}(p_1)=p_2$.
\end{prop}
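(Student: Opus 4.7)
The plan is to reduce the statement to the classifying Lie algebroid framework of Section \ref{sec:classifying:algbrd}. I would show that under the constant curvature hypothesis, the $O(p,n-p)$-structure with its Levi-Civita connection $(\OM,\theta,\omega)$ is fully regular of rank $0$, so its classifying manifold $X_{(\theta,\omega)}$ consists of a single point. Then any two frames lie in one equivalence class under $\sim$, and by Theorem \ref{thm:tensorial-equiv} any such equivalence is induced by a local isometry of $(M,\eta)$.

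First, I would identify the basic invariants. Since $\omega$ is the Levi-Civita connection, the torsion vanishes: $T\equiv 0$. Under the constant curvature hypothesis, the Riemann tensor has the universal form
\[
R(X,Y)Z=k\bigl(\eta(Y,Z)X-\eta(X,Z)Y\bigr),
\]
with $k$ a fixed constant on $M$. Transported to the frame bundle, the $O(p,n-p)$-equivariant map $R:\OM\to\hom(\wedge^2\Rr^n,\mathfrak{o}(p,n-p))$ takes the single $O(p,n-p)$-fixed value $R_0=R_0(k)$ determined by $k$; in particular, $R$ is literally a constant function on $\OM$.

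Next, I would check that all iterated coframe derivatives of $T$ and $R$ vanish. The derivative $\frac{\partial R}{\partial\omega}(\al)=-\al\cdot R_0$ vanishes because $R_0$ is $O(p,n-p)$-invariant (cf. Remark \ref{rmk:covariant:derivative}); the derivative $\frac{\partial R}{\partial\theta}$ corresponds to $\nabla R$ on $M$, which vanishes because $R$ is built from $\eta$ and the constant $k$, both parallel. Induction on differentiation order shows every invariant function on $\OM$ is locally constant, so $\mathcal{I}_p=\{0\}$ for all $p$ and $(\OM,\theta,\omega)$ is fully regular of rank $0$. The classifying manifold $X_{(\theta,\omega)}$ is therefore zero-dimensional, i.e., discrete. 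To see that it is actually a single point, I would invoke the $G$-equivariance of the classifying map $h:\OM\to X_{(\theta,\omega)}$ (from the discussion following Proposition \ref{prop:classify:algbrd}): since the $G$-action on the discrete space $X_{(\theta,\omega)}$ must be trivial, $h(p\cdot g)=h(p)\cdot g=h(p)$, so $h$ descends to $M$; as $M$ is connected and $X_{(\theta,\omega)}$ discrete, this descent is a constant map. Therefore any two frames $p_1,p_2\in\OM$ are $\sim$-equivalent, and the resulting local equivalence $\Phi$ is of the form $\widetilde\varphi$ for a local isometry $\varphi$ with $\widetilde\varphi(p_1)=p_2$.

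The main obstacle will be the induction in the third paragraph — rigorously verifying that \emph{all} iterated coframe derivatives of $T$ and $R$ vanish, so no hidden invariant appears at higher order. This is a classical aspect of Cartan's local equivalence machinery for constant curvature spaces; the computation is tractable because $R_0$ is $O(p,n-p)$-invariant (killing the $\frac{\partial}{\partial\omega}$ derivatives by equivariance) and $\nabla\eta=0$ with $k$ constant (killing the $\frac{\partial}{\partial\theta}$ derivatives by parallelism), so the iteration collapses at first order.
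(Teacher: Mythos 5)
The paper does not actually prove this proposition: it is recalled as a classical fact, with references to Kobayashi--Nomizu for the Riemannian case and to \cite{Kriele99} for the pseudo-Riemannian case, and the paper's own logic then runs in the \emph{opposite} direction to yours --- it uses the proposition to conclude that $(\OM,\theta,\omega)$ is fully regular of rank $0$. Your attempt reverses this, and that reversal is where the genuine gap lies. In this paper $\Inv(\OM,\theta,\omega)$ is defined as the set of \emph{all} functions invariant under local self-equivalences, and the rank is the dimension of $\mathcal{I}_p=\{\d_p I: I\in \Inv(\OM,\theta,\omega)\}$. Your induction only shows that the particular invariants generated from $T$ and $R$ by iterated coframe differentiation are constant. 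That family sits \emph{inside} $\Inv(\OM,\theta,\omega)$, so the inclusion goes the wrong way: nothing in your argument bounds $\Inv(\OM,\theta,\omega)$ from above. If, for instance, the only local self-equivalence were the identity, then \emph{every} smooth function would be invariant and the rank would be $\dim \OM$, not $0$; ruling this out is precisely the assertion that local isometries are plentiful, i.e., the proposition itself. The implication you need (``the computable invariants are constant $\Rightarrow$ every invariant is constant'') is equivalent to the local equivalence theorem (Cartan--Ambrose--Hicks, or the coframe equivalence theorem of \cite{FernandesStruchiner1} formulated for the derived invariants), which is exactly the engine of the classical proofs the paper cites. As written, your proof is circular.

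There is a second, smaller failure at the end. Even granting rank $0$, your deduction that $X_{(\theta,\omega)}$ is a single point rests on ``the $G$-action on a discrete space must be trivial'', which holds only for connected groups; $G=\O(p,n-p)$ has $2$ or $4$ connected components. What equivariance plus connectedness of $M$ actually gives is that $X_{(\theta,\omega)}$ is a single \emph{finite} $G$-orbit --- i.e., local homogeneity in the sense of Theorem \ref{thm:homogeneous} --- whereas the proposition demands transitivity on all of $\OM$, i.e., that $X_{(\theta,\omega)}$ be literally one point with every component of $G$ acting trivially. Closing this gap requires exhibiting local isometries that realize the orientation- and time-reversing frame changes, and that again comes from the Cartan--Ambrose--Hicks construction: the linear isometry $p_2\circ p_1^{-1}$ carries the curvature tensor at $\pi(p_1)$ to the one at $\pi(p_2)$, because both equal the universal expression $k\left(\eta(Y,Z)X-\eta(X,Z)Y\right)$, which is preserved by \emph{every} element of $\O(p,n-p)$; the equivalence theorem then integrates this to a local isometry $\varphi$ with $\widetilde{\varphi}(p_1)=p_2$. (Note also that this universal form of the curvature follows from constant \emph{sectional} curvature, not constant scalar curvature; you, like the paper, are implicitly reading the hypothesis that way, which is the intended space-form setting.) If you want a proof in the spirit of your plan, the honest route is: constancy of the structure functions of the coframe $(\theta,\omega)$, then the classical theorem on coframes with constant structure functions to get local equivalences of the coframe acting transitively, then an argument that such coframe equivalences are lifts $\widetilde{\varphi}$ of local isometries --- each of these is a real step, not a formal consequence of the classifying-algebroid formalism.
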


The (local) equivalences of $(\OM,\theta,\omega)$ are the lifts of (local) diffeomorphisms $\varphi:M\to M$ which preserve $\eta$. Hence, for a space of constant scalar curvature the $O(p,n-p)$-structure $(\OM,\theta,\omega)$ is fully regular of rank $0$. In particular, $X_{(\theta,\omega)}$ reduces to a point and the classifying algebroid $A_{(\theta,\omega)}$ is actually a Lie algebra. 

If $(M,\eta)$ has constant curvature $\kappa$ its Riemannian curvature tensor takes the form:
\[ R(X,Y) Z=\kappa \Big(\eta(Z,Y)X-\eta(Z,X)Y\Big). \]
Since the torsion vanishes identically, we conclude from Proposition \ref{prop:classify:algbrd} that the classifying algebroid of $(\OM,\theta,\omega)$ is the Lie algebra $A_{(\theta,\omega)}=\Rr^n\oplus\mathfrak{so}(p,n-p)$ with Lie bracket:
\begin{equation}
\label{eq;bracket:const:curv} 
[(u,\al), (v, \be)] = (\al\cdot v - \be\cdot u, [\al,\be] - R(u,v)),
\end{equation}
where $R:\wedge^2\Rr^n\to\mathfrak{so}(p,n-p)$ is given by:
\[ R(u,v)(w)=\kappa \left(\langle w,v\rangle_{(p,n-p)} u-\langle w,u\rangle_{(p,n-p)} v\right).\]

Classically, spaces $(M,\eta)$ with constant scalar curvature are called \emph{space forms}. More generally, a $G$-structure with connection $(P,\theta,\omega)$ may be called a \emph{generalized space form} if it is fully regular of zero constant rank and is torsionless. Equivalently, the corresponding classifying space reduces to a point $X_{(\theta,\omega)}$, i.e., the classifying Lie algebroid is a Lie algebra $A_{(\theta,\omega)}=\Rr^n\oplus\gg$ with Lie bracket:
\[ [(u,\al), (v, \be)] = (\al\cdot v - \be\cdot u, [\al,\be] - R(u,v)),\]
where $R:\wedge^2\Rr^n\to\gg$. In this generality, $R$ is not characterized solo by a number, and one cannot talk about scalar curvature.

%
%
%
%
%
%

\subsection{Homogeneous $G$-structures}
\emph{Homogeneous $G$-structures with connections} are $G$-structures with connection which have large symmetry groups. For a nice overview and references to standard results see \cite{Ballmann00}.


\begin{definition}
A $G$-structure with connection $(P,\theta, \omega)$ is \textbf{(locally) homogeneous} if for any $m_1,m_2\in M=P/G$ there exists a (local) equivalence mapping $m_1$ to $m_2$.
\end{definition}

We note that this condition can be expressed at the level of frames by saying that for any $p,q \in P$ there exists a (local) equivalence (between $G$-saturated open neighborhoods of $p$ and $q$) mapping the orbit of $p$ to the orbit of $q$. For example, the generalized space forms in the previous paragraph are examples of \emph{locally homogeneous} $G$-structures with connection. They become \emph{homogeneous} $G$-structures if, e.g., they are geodesically complete and 1-connected.

%

\begin{theorem}\label{thm:homogeneous}
A $G$-structure with connection $(P,\theta,\omega)$ is locally homogeneous if and only if it is fully regular and the $G$-action on its classifying manifold $X_{(\theta,\omega)}$ is transitive.
\end{theorem}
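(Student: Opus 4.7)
The plan is to use the classifying map $h:P\to X_{(\theta,\omega)}$ as the main bookkeeping device. Two facts are central: every local equivalence $\Phi$ preserves invariants and hence preserves fibers of $h$; and $h$ is $G$-equivariant. Translating local homogeneity to the total space $P$, the condition reads: for any $p,q\in P$ there exist a local equivalence $\Phi$ and $g\in G$ with $\Phi(p)=q\cdot g$. With this reformulation, the equivalence with transitivity of the $G$-action on $X_{(\theta,\omega)}$ reduces essentially to chasing definitions.

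For the forward direction, assume $(P,\theta,\omega)$ is locally homogeneous. The real work is to show full regularity, i.e., that $p\mapsto\dim\mathcal{I}_p$ is constant, so that $X_{(\theta,\omega)}$ is defined in the first place. A local equivalence $\Phi$ with $\Phi(p)=q'$ yields a linear isomorphism $(\d_p\Phi)^*:\mathcal{I}_{q'}\to\mathcal{I}_p$. A right translation $R_g$ is not itself a local equivalence, but $G$-equivariance of every local equivalence gives $\Phi\circ R_g=R_g\circ\Phi$, from which one checks that $R_g^*I\in\Inv(P,\theta,\omega)$ whenever $I\in\Inv(P,\theta,\omega)$; hence $(\d_qR_g)^*$ restricts to a bijection $\mathcal{I}_{q\cdot g}\to\mathcal{I}_q$. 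Local homogeneity relates any two points of $P$ by a composition of a local equivalence and a right translation, so $\dim\mathcal{I}_p$ is constant. Once $X_{(\theta,\omega)}$ is available, transitivity is immediate: from $\Phi(p)=q\cdot g$ we get $h(p)=h(\Phi(p))=h(q\cdot g)=h(q)\cdot g$, and surjectivity of $h$ completes the argument.

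Conversely, assume $(P,\theta,\omega)$ is fully regular and $G$ acts transitively on $X_{(\theta,\omega)}$. Given $p,q\in P$, transitivity provides $g\in G$ with $h(p)=h(q)\cdot g=h(q\cdot g)$, so $p\sim q\cdot g$, and by the very definition of the equivalence relation $\sim$ there is a local equivalence $\Phi$ sending $p$ to $q\cdot g$; projecting to $M=P/G$ this gives the desired equivalence between $\pi(p)$ and $\pi(q)$. The delicate step in the whole argument is the preservation of $\mathcal{I}_p$ by right translations in the forward direction: local homogeneity produces a local equivalence taking $p$ only into the $G$-orbit of $q$, so one really needs that $R_g^*$ permutes $\Inv(P,\theta,\omega)$, which in turn rests on $G$-equivariance of local equivalences.
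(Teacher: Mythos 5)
Your proof is correct and follows essentially the same route as the paper's: the key step in both is the observation that $g^*I$ is again an invariant function (via $G$-equivariance of local equivalences), which yields the isomorphism $\mathcal{I}_{qg}\simeq\mathcal{I}_q$ and hence full regularity, after which transitivity of the $G$-action on $X_{(\theta,\omega)}$ is a direct translation of the orbit-to-orbit formulation of local homogeneity. No gaps.
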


\begin{proof}
We begin by showing that if $(P,\theta,\omega)$ is locally homogeneous, then it is fully regular. We must show that the dimension of
\[ \mathcal{I}_p = \{ \d_p I: I\in \Inv(P,\theta,\omega)\}\subset T_p^*P \]
does not depend on $p$. 

Let $p, q \in P$, and let $\Phi: U_p \to U_q$ be a local equivalence between $G$-saturated neighbourhoods of $p$ and $q$ mapping the $G$-orbit of $p$ to that of $q$. Then there exists $g \in G$ such that $\Phi(p) = qg$ and $\Phi^*: \mathcal{I}_{qg} \to \mathcal{I}_p$ is an isomorphism. Hence, it is enough to show that $\mathcal{I}_{qg} \simeq \mathcal{I}_q$. 

Note that if  $I \in \Inv(P,\theta,\omega)$, and $g \in G$, then
\[g^*I: P \to \Rr, \quad g^*I(p) = I(pg)\]
is also an invariant function. In fact, for any local equivalence $\Phi: P \to P$ one has that
\[g^*I\Phi(p) = I(\Phi(p)g) = I(\Phi(pg)) = I(pg) = g^*I(p).\]
It then follows that 
\[g^*: \mathcal{I}_{qg} \to \mathcal{I}_q, \quad \d_{qg}I \mapsto \d_q g^*I\]
is an isomorphism. 

Next we show that a fully regular $G$-structure with connection $(P,\theta,\omega)$ is locally homogeneous if and only if the $G$-action on the classifying manifold  $X_{(\theta,\omega)}$ is transitive. Recall that the classifying manifold is $X_{(\theta,\omega)} = P/\sim$ where $p \sim q$ if and only if there exists a local equivalence $\Phi: U_p \to U_q$ such that $\Phi(p) = q$. The natural $G$-action on $X_{(\theta,\omega)}$ is given by $[p]g = [pg]$. It is then clear that this action is transitive if and only if for each frame $q \in P$, the equivalence class of $q$ under local equivalences intersects the $G$-orbit of any frame $p \in P$.
\end{proof}

If $(P,\theta,\omega)$ be a locally homogeneous $G$-structure with connection, then the Lie algebras $\X(P,\theta,\omega)_p$ of germs of infinitesimal symmetries are all isomorphic and 
\[  \dim \X(P,\theta,\omega)_p \geq \dim(P/G). \]
Recall that $\X(P,\theta,\omega)_p$ can also be identified with the kernel of the anchor of the classifying algebroid at $h(p)$. On the other hand, it follows from the Theorem \ref{thm:homogeneous} that the classifying manifold of a locally homogeneous $G$-structure with connection is a homogeneous space
\[ X_{(\theta,\omega)} \simeq G/G_x,\] 
where $x = [p] \in X_{(\theta,\omega)}$, and $G_x$ is the isotropy subgroup at $x$. The isotropy group $G_x$ has geometric meaning: if $m=\pi(p) \in M=P/G$, then $G_x$ is isomorphic to the group of germs at $m$ of diffeomorphisms which fix $m$ and are local symmetries of $(P,\theta,\omega)$. This follows directly from \cite[Thm 7.2]{FS19}.

\smallskip

Properties of the classifying Lie algebroid of a locally homogeneous $G$-structure with connection are reflected in the geometry of the $G$-structure. We will not give a detail discussion here, but the following result is an instance of this relationship:

\begin{theorem}\label{thm:symmetric}
Let $(P,\theta,\omega)$ be a locally symmetric $G$-structure with connection. Then its classifying Lie algebroid satisfies the following properties:
\[ T\equiv 0 \text{ and } F\equiv 0. \]
Conversely, if these conditions hold then the linear connection $\nabla$ on $M=P/G$ induced by $\omega$ is locally symmetric.
\end{theorem}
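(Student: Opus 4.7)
The plan is to use the involutions that a locally symmetric structure provides at every point to annihilate all $\Rr^n$-directional invariants, and for the converse to invoke the classical characterization of locally symmetric connections. By \emph{locally symmetric} I interpret the standard notion: for every $m\in M$ there is a local self-equivalence $s_m$ of the $G$-structure with $s_m(m)=m$ and $(ds_m)_m=-\mathrm{id}$ (which tacitly requires $-I_n\in G$). Such an $s_m$ lifts to a local equivalence $\widetilde{s_m}$ of $(P,\theta,\omega)$ whose restriction to $\pi^{-1}(m)$ coincides with the right translation $R_{-I_n}$. Letting $m$ vary, the key output is that \emph{for every $p\in P$ there is a local equivalence carrying $p$ to $p\cdot(-I_n)$}, so that any $V$-valued invariant $I\colon P\to V$ satisfies $I(p\cdot(-I_n))=I(p)$ for every $p$, i.e.\ $R_{-I_n}^*I=I$.

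The forward direction then becomes a two-step equivariance argument. For the torsion $T\colon P\to\hom(\wedge^2\Rr^n,\Rr^n)$, which is both invariant and $G$-equivariant, symmetry invariance gives $T(p\cdot(-I_n))=T(p)$, while $G$-equivariance gives $T(p\cdot(-I_n))=(-I_n)^{-1}\cdot T(p)=-T(p)$; the two minus signs from the arguments in $\wedge^2\Rr^n$ combine with the single minus sign from the target $\Rr^n$ to produce an overall sign change, forcing $T\equiv 0$ on $P$, hence on $X_{(\theta,\omega)}$. For $F\equiv 0$, take any $f\in C^\infty(X_{(\theta,\omega)})$, let $I=h^*f$, and apply the same observation: we have $R_{-I_n}^*I=I$, and because the coframe derivatives $\partial I/\partial\theta$ and $\partial I/\partial\omega$ are themselves local-equivalence invariants, we also have $R_{-I_n}^*(\partial I/\partial\theta)=\partial I/\partial\theta$ and likewise for $\partial I/\partial\omega$. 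Differentiating $R_{-I_n}^*I=I$ and expanding both sides in the coframe $(\theta,\omega)$, using $R_{-I_n}^*\theta=-\theta$ and $R_{-I_n}^*\omega=\omega$ (the latter because $-I_n$ is central, so $\Ad_{-I_n}=\mathrm{id}$), yields $\partial I/\partial\theta=-\partial I/\partial\theta$ after matching $\theta$-components, hence $\partial I/\partial\theta\equiv 0$. By the defining formula \eqref{eq:Rn:coframe:derivative} this is exactly $F\equiv 0$.

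For the converse, the hypothesis $T\equiv 0$ on $X_{(\theta,\omega)}$ pulls back via $h$ to the vanishing of the torsion function of $\omega$ on $P$, which is equivalent to $\nabla$ being torsion-free on $M$. The hypothesis $F\equiv 0$ applied to the $G$-equivariant invariant curvature $R$ (viewed as a map $X_{(\theta,\omega)}\to\hom(\wedge^2\Rr^n,\gg)$) gives $\partial R/\partial\theta\equiv 0$; by the dictionary of Remark \ref{rmk:covariant:derivative} between the coframe derivative $\partial/\partial\theta$ and the covariant derivative $\nabla$ on associated bundles, this is exactly $\nabla R=0$. Since a torsion-free linear connection with parallel curvature is locally symmetric by the classical result on affine locally symmetric spaces, the conclusion follows. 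The main obstacle in this plan is the bookkeeping in the forward direction: one must carefully distinguish $\widetilde{s_m}$ from $R_{-I_n}$ (they agree only on the fiber $\pi^{-1}(m)$, not globally), and it is exactly the discrepancy $R_{-I_n}^*\theta=-\theta$ versus $\widetilde{s_m}^*\theta=\theta$ that produces the sign responsible for the vanishing of $F$.
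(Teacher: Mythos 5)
Your proof is correct and follows essentially the same route as the paper: both hinge on the fact that the lifted symmetry acts on the fiber over its fixed point as $R_{-\mathbb{I}}$, so the sign $R_{-\mathbb{I}}^*\theta=-\theta$ (versus $R_{-\mathbb{I}}^*\omega=\omega$) forces every $\theta$-coframe derivative of an invariant function to vanish, and the converse is the classical ``torsion-free with $\nabla$-parallel curvature'' characterization of locally symmetric connections. The only cosmetic differences are that you re-derive $T\equiv 0$ from the clash between invariance and $G$-equivariance where the paper cites the standard vanishing of torsion for locally symmetric connections, and you obtain $\partial I/\partial\theta\equiv 0$ by differentiating the identity $R_{-\mathbb{I}}^*I=I$ and invoking invariance of the coframe derivative, rather than evaluating at the fixed point of $\varphi_m=\widetilde{S_m}\circ R_{-\mathbb{I}}$ as in the paper's Lemma \ref{lemma:covariant:derivative}.
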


Before we turn to the proof we recall that a manifold $M$ with a linear connection $\nabla$ is called a (locally) symmetric space if the geodesic symmetries through each $m\in M$ are (local) affine transformations of $(M,\nabla)$. Given a $G$-structure with connection $(P,\theta,\omega)$ we will say that it is (locally) symmetric if the geodesic symmetries are (local) symmetries of the $G$-structure. If a $G$-structure with connection $(P,\theta,\omega)$ is (locally) symmetric then $M=P/G$ with the induced linear connection $\nabla$ is (locally) symmetric. The converse does need to hold: for example, for the usual round metric on the 3-sphere the Levi-Civita connection is symmetric and the associated $O(3)$-structure with connection is also symmetric. However, the associated smaller $SO(3)$-structure with connection is not symmetric (the geodesic symmetries do not preserve orientations of the frames).

Recall also that $(M,\nabla)$ is locally symmetric if and only if its torsion vanishes and its curvature tensor is $\nabla$-parallel -- see, example, \cite{KN2}. It follows that if a $G$-structure with connection $(P,\theta,\omega)$ is locally symmetric then its torsion and curvature maps 
\[\widetilde{T}: P \to \hom(\wedge^2\Rr^n,\Rr^n), \text{ and } \widetilde{R}: P \to \hom(\wedge^2\Rr^n,\gg),\] 
satisfy $\widetilde{T} \equiv 0$ and $\frac{\partial \widetilde{R}}{\partial \theta} \equiv 0$ (see Remark \ref{rmk:covariant:derivative}). 

\begin{remark}\label{rm:-id}
If $(P,\theta,\omega)$ is a locally symmetric $G$-structure, then $-\mathbb{I}\in \GL_n(\Rr)$ must be an element of $G$. In fact, let $S_m$ be the local geodesic symmetry around $m = \pi(p) \in M=P/G$. The lift $\widetilde{S_m}:\B(M)\to \B(M)$ of $S_m$ to the frame bundle maps $p$ to $-p$. Since $S_m$ is a symmetry of the $G$-structure, it follows that $-p \in P$ whenever $p\in P$. It follows also that any invariant function $\Inv(P,\theta,\omega)$ must be invariant under the action of $-\mathbb{I}$. This in turn implies that all elements of $X_{(\theta,\omega)}$ are fixed by $-\mathbb{I}\in G$.
\end{remark}

In order to prove Theorem \ref{thm:symmetric}, we start by proving the following lemma:

\begin{lemma}\label{lemma:covariant:derivative}
Let $(P, \theta, \omega)$ be a locally symmetric $G$-structure with connection,. Then $\frac{\partial I}{\partial \theta} = 0$ for all invariant functions $I \in \mathrm{Inv}(P,\theta, \omega)$.
\end{lemma}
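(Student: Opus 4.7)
The plan is to play two maps off against each other: the lifted geodesic symmetry $\widetilde{S_m}$ (which \emph{is} a self-equivalence of $(P,\theta,\omega)$) and right translation $R_{-\mathbb{I}}$ by $-\mathbb{I}\in G$ (which is \emph{not} a self-equivalence, since $R_{-\mathbb{I}}^*\theta=-\theta$). The crucial observation, from Remark \ref{rm:-id}, is that for every $p\in P$ and $m=\pi(p)$ one has $\widetilde{S_m}(p)=-p=p\cdot(-\mathbb{I})$, so these two maps have the same effect on the point $p$ but different effects on the coframe.

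First I would differentiate the identity $\widetilde{S_m}^*I=I$ at $p$. Since $\widetilde{S_m}$ preserves both $\theta$ and $\omega$, the tangent map $\d\widetilde{S_m}\colon T_pP\to T_{-p}P$ identifies coframe components: if $(\theta,\omega)_p(\xi)=(u,\alpha)$ then $(\theta,\omega)_{-p}(\d\widetilde{S_m}(\xi))=(u,\alpha)$. Expanding $(\d I)_p$ and $(\d I)_{-p}$ in the coframe $(\theta,\omega)$ and matching coefficients gives
\[\frac{\partial I}{\partial \theta}(p)=\frac{\partial I}{\partial \theta}(-p), \qquad \frac{\partial I}{\partial \omega}(p)=\frac{\partial I}{\partial \omega}(-p).\]

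Next, because the construction of $\widetilde{S_m}$ can be carried out at \emph{every} $p$, the relation $I(p)=I(\widetilde{S_m}(p))=I(-p)$ holds pointwise on $P$, i.e.,
\[I\circ R_{-\mathbb{I}}=I.\]
I would differentiate this identity and use the transformation rules $R_{-\mathbb{I}}^*\theta=-\theta$ and $R_{-\mathbb{I}}^*\omega=\mathrm{Ad}_{-\mathbb{I}}\,\omega=\omega$ (the latter since $-\mathbb{I}$ is central in $\GL_n(\Rr)$). Matching coefficients in the coframe expansion of $R_{-\mathbb{I}}^*\d I=\d I$ yields
\[\frac{\partial I}{\partial \theta}(p)=-\frac{\partial I}{\partial \theta}(-p), \qquad \frac{\partial I}{\partial \omega}(p)=\frac{\partial I}{\partial \omega}(-p).\]

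Combining the two computations gives $\frac{\partial I}{\partial \theta}(p)=-\frac{\partial I}{\partial \theta}(p)$, and hence $\frac{\partial I}{\partial \theta}(p)=0$ for every $p$. There is no serious obstacle; the only conceptual point is to recognize that although $R_{-\mathbb{I}}$ fails to be a self-equivalence, it can be ``paired'' with the genuine self-equivalence $\widetilde{S_m}$ sending $p$ to $-p$, and the resulting sign asymmetry between $\theta$- and $\omega$-components is exactly what forces $\partial I/\partial\theta$ to vanish while leaving $\partial I/\partial\omega$ unconstrained.
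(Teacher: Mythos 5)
Your proof is correct and uses exactly the same two ingredients as the paper's: the lifted geodesic symmetry $\widetilde{S_m}$ (a genuine self-equivalence sending $p$ to $-p$) and right translation by $-\mathbb{I}$ (which negates $\theta$ and fixes $\omega$). The paper merely packages them as the single composite $\varphi_m=\widetilde{S_m}\circ(-\mathbb{I})$, which fixes $p$ and acts as $-1$ on $\ker\omega_p$, so the sign contradiction appears in one line rather than by combining your two separate identities at $p$ and $-p$.
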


\begin{proof}
Fix $m = \pi(p) \in P/G$. Since any $I\in \mathrm{Inv}(P,\theta,\omega)$ is invariant under both $\widetilde{S_m}$ and $-\mathbb{I}$, it follows that $I$ is also invariant under the map:
\[ \varphi_m:P\to P, \quad \varphi_m = \widetilde{S_m}\circ (-\mathbb{I}).\]
Using the $G$-equivariance of $\theta$ and $\omega$ we see that this map satisfies:
\begin{align*}
\varphi_m^*\theta & = (-\mathbb{I})^*\theta = (-\mathbb{I})\cdot\theta=-\theta,\\\
\varphi_m^*\omega & = (-\mathbb{I})^*\omega = \Ad_{-\mathbb{I}}\omega = \omega.
\end{align*}
In particular, $\varphi_m$ fixes the point $p$ and satisfies:
\[ \d_p\varphi_m(\xi_u)=-\xi_u,\]
where $\xi_u \in \ker \omega_p$ is the unique vector such that $\theta(\xi_u) = u$.

The definition of the coframe derivative of $f \in \mathrm{C}^\infty(P)$ with respect to $\theta$ shows that it can be computed as
\[\frac{\partial f}{\partial \theta}(p)(u) = \d_p f(\xi_u). \]
Hence, it follows that for any $I \in \mathrm{Inv}(P,\theta,\omega)$, we have
\[\frac{\partial I}{\partial \theta}(p)(u) = \d_p I(\xi_u) = \d_p (I \circ \varphi_m)(\xi_u) = \d_pI(-\xi_{u}) = -\frac{\partial I}{\partial \theta}(p)(u). \]
This implies that $\frac{\partial I}{\partial \theta}(p) = 0$. Since $p$ is arbitrary we obtain that $\frac{\partial I}{\partial \theta}\equiv 0$.
\end{proof}

\begin{proof}[Proof of Theorem \ref{thm:symmetric}]
Let $(P,\theta, \omega)$ be a $G$-structure with connection. We denote by 
\[ \widetilde{T}: P \to \hom(\wedge^2\Rr^n,\Rr^n), \qquad \widetilde{R}: P \to \hom(\wedge^2\Rr^n,\gg)\]
the invariant maps corresponding to the structure maps $T: X_{(\theta,\omega)} \to \hom(\wedge^2\Rr^n,\Rr^n)$ and $R: X_{(\theta,\omega)} \to \hom(\wedge^2\Rr^n,\gg)$ of the classifying Lie algebroid $A_{(\theta,\omega)} \to X_{(\theta,\omega)}$. 

If $(P,\theta, \omega)$ is locally symmetric, then we know that $\widetilde{T} \equiv 0$ and so it follows that $T \equiv 0$.  On the other hand, recall that $h^*\mathrm{C}^\infty(X_{(\theta,\omega)}) = \mathrm{Inv}(P,\theta,\omega)$, and for any $f \in \mathrm{C}^\infty(X_{(\theta,\omega)})$, $u\in \Rr^n$, \eqref{eq:Rn:coframe:derivative} gives
\[\d_xf(F_x(u)) = \frac{\partial (h^*f)}{\partial \theta}(p)(u),\]
where $h(p) = x$. It then follows from Lemma \ref{lemma:covariant:derivative} that $F \equiv 0$.

For the converse, it is clear that if $T \equiv 0$ then $\widetilde{T}\equiv 0$, so $\nabla$ is torsion free. Also, using again \eqref{eq:Rn:coframe:derivative}, if $F\equiv 0$ we have
\[ \frac{\partial \widetilde{R}}{\partial \theta}(p)(u) = \Lie_{F(h(p),u)}R\big{|}_{h(p)}= 0. \]
Since $F \equiv 0$ it follows that $\frac{\partial \widetilde{R}}{\partial \theta} \equiv 0$. Equivalently, by Remark \ref{rmk:covariant:derivative}, the curvature $\nabla$-parallel, so $\nabla$ is locally symmetric.
\end{proof}

\begin{remark}
Theorem \ref{thm:symmetric} and Remark \ref{rm:-id} reveal traces of a relationship between the (extremely rich) geometry of symmetric spaces and a special class of of $G$-structure algebroids (see Section \ref{sec:G:structure:algbrd:grpd}), namely those satisfying:
\begin{enumerate}[(i)]
\item $-\mathbb{I} \in G$ and $-\mathbb{I}$ acts trivially on $X$;
\item $F \equiv 0$.
\end{enumerate}
This allows, e.g., to obtain insight into the question of how far a homogeneous $G$-structure is from being symmetric. For instance, it is possible to relate the index of symmetry introduced in \cite{BO17} with the rank of $F$. We will not explore here this relationship and leave it for future work.
\end{remark}

\subsection{Riemannian case}
An interesting class of examples of locally homogeneous $G$-structures with connection is obtain by considering \emph{left (right) invariant $G$-structures with connections} on Lie groups.

\begin{definition}
Let $H$ be a Lie group. A $G$-structure with connection $(\B_G(H), \omega)$ on $H$ is \textbf{left invariant} if the left translation $L_h: H \to H$ is a symmetry of $(\B_G(H), \omega)$, for all $h \in H$.
\end{definition}


Every left invariant $G$-structure with connection on a Lie group $H$ is trivializable, since one can identify $\B_G(H)$ with $H \times G$. Under this identification the tautological and connection forms can be decomposed in a simple way. This allows one to give a more explicit description of the structure equations and therefore of the classifying algebroid of such a structure. We will not explore this in full generality here, but we will focus in the case of $O(n)$-structures, i.e., metrics. We will present bellow an explicit computation for a specific left invariant metric on the Lie group $\mathrm{SU}_2$, which already captures the general case.

Left invariant invariant metric on Lie groups have been extensively studied since the pioneer work of Milnor and others in the 1970's (\cite{AZ79,Gordon80,Milnor76,OT76}). In the language of $G$-structures, these correspond to left invariant $\O_n(\Rr)$-structures with the Levi-Civita connection. Every left invariant metric on a Lie group $H$ is locally homogeneous in the sense of the previous paragraph. It follows that the corresponding $\O_n(\Rr)$-structures with the Levi-Civita connection is fully regular, and moreover that its classifying manifold $X_{(\theta,\omega)}$ is identified with $\O_n(\Rr)/K$, where $K$ is the group of germs of isometries of $H$ at the identity $e\in H$. Note that when forming the quotient $\O_n(\Rr)/K$, we identify $K$ with a subgroup of $\O_n(\Rr)$ as follows: fix a frame $p \in \B_{\O_n(\Rr)}(H)$ over $e$ and let $\varphi \in K$ be a germ at $e$. Then there exists a unique $g \in \O_n(\Rr)$ such that 
\[\d_e\varphi \circ p = pg.\] 
This determines an embedding of $K$ in $\O_n(\Rr)$ as a closed subgroup.

The left invariant metrics on 1-connected 3-dimensional unimodular Lie groups were classified in \cite{HL12}. In what follows we compute the classifying algebroid of a specific left invariant metric on $\mathrm{SU}_2$. The arguments presented below can be generalized to obtain the classifying algebroid of any left invariant metric on a  1-connected and compact Lie group.  

We denote by $\eta$ the left invariant metric on $H=\mathrm{SU}_2$ for which the matrices in 
\[p_0=\left\{\left(\begin{array}{ccc}
0& 0& 1 \\
0 & 0 & 0\\
-1 & 0 & 0  \end{array} \right),
\left(\begin{array}{ccc}
0& 2& 0 \\
-2 & 0 & 0\\
0 & 0 & 0  \end{array} \right), 
\left(\begin{array}{ccc}
0 & 0& 0 \\
0 & 0 & 4\\
0 & -4 & 0  \end{array} \right)\right\},\]
form an orthonormal frame of $T_eH = \mathfrak{so}_3$. A straightforward computation leads to the value of the curvature map at this frame: $R(p_0) \in \hom(\wedge^2\Rr^3, \mathfrak{so}_3)$. If one denote by $e_1, e_2, e_3$ the canonical basis of $\Rr^3$, then one finds:
\begin{align*}
R(p_0)(e_1,e_2) = \left(\begin{array}{ccc}
0& c_1& 0 \\
-c_1 & 0 & 0\\
0 & 0 & 0  \end{array} \right),& \quad R(p_0)(e_1,e_3) = \left(\begin{array}{ccc}
0& 0& c_2 \\
0 & 0 & 0\\
-c_2 & 0 & 0  \end{array} \right),\\
R(p_0)(e_2,e_3) &= \left(\begin{array}{ccc}
0 & 0& 0 \\
0 & 0 & -c_3\\
0 & c_3 & 0  \end{array} \right),
\end{align*}
where 
\[c_1 = \frac{181}{16}, \quad c_2 = \frac{313}{16}, \quad c_3= \frac{599}{16}.\]
The value of the curvature map at any other frame can be obtained using the $G$-equivariance
\[R(pg)(u,v) = \mathrm{Ad}_{g^{-1}}R(p)(gu,gv).\]

In order to determine the classifying manifold $X_{(\theta,\omega)}$ of the orthogonal frame bundle of $(H,\eta)$ we must find the group $K$ of germs of isometries which fixes $e \in H$. Note however, that since $H$ is compact and simply connected, its frame bundle will be a strongly complete realization of its classifying algebroid -- see Definition \ref{def:SC} and Section 6 of \cite{FS19} for details. It then follows that any local isometry of $H$ extends to a global isometry and therefore we can identify $K$ with the group of global isometries of $H$ which fix the identity $e$.  The group $K$ was computed in \cite{HL12} and its is isomorphic to $\Zz_2 \times \Zz_2$, namely it is formed by the matrices:
\[\left(\begin{array}{ccc}
1& 0& 0 \\
0 & 1 & 0\\
0 & 0 & 1  \end{array} \right), \quad \left(\begin{array}{ccc}
1& 0& 0 \\
0 & -1 & 0\\
0 & 0 & -1  \end{array} \right), \quad \left(\begin{array}{ccc}
-1& 0& 0 \\
0 & 1 & 0\\
0 & 0 & -1  \end{array} \right),\quad \left(\begin{array}{ccc}
-1& 0& 0 \\
0 & -1 & 0\\
0 & 0 & 1  \end{array} \right).\]
Therefore, the classifying manifold $X_{(\theta,\omega)}$ of the left invariant metric $\eta$ on $H$ is identified with $\mathrm{SO}_3/(\Zz_2 \times \Zz_2)$, and $R$ can be viewed as a map $X_{(\theta,\omega)} \to \hom(\wedge^2\Rr^3,\mathfrak{so}_3)$.

The classifying Lie algebroid of $(\mathrm{SU}_2, \eta)$ is the trivial vector bundle $A_{(\theta,\omega)} = X_{(\theta,\omega)} \times \Rr^3\oplus \mathfrak{so}_3 \to X_{(\theta,\omega)}$, with Lie bracket on constant sections given by
\[[(u,\al),(v,\be)](x) = (\al\cdot v - \be\cdot u, [\al,\be] - R(x)(u,v)).\] 
Finally its anchor is $\rho(u,\al)_x = F(x)(u) + \psi(\al)_x$ where $\psi: \gg \to \X(X_{(\theta,\omega)})$ denotes the infinitesimal $G$-action on $X_{(\theta,\omega)}$, and and $F(x): \Rr^3 \to T_xX_{(\theta,\omega)}$ Note however that since $\psi_x: \gg \to T_xX_{(\theta,\omega)}$ is injective, it follows that $F(x) = 0$ for all $x \in X_{(\theta,\omega)}$.

\section{$G$-structure groupoids and $G$-structure algebroids}
\label{sec:G:structure:algbrd:grpd}

The Lie algebroids that classify regular $G$-structures with connection form a special class of Lie algebroids. We will now discuss this class of Lie algebroids, called \emph{$G$-structure Lie algebroids} (with connection), as well as their global counterparts, \emph{$G$-structure Lie groupoids} (with connection). As we will will see, a $G$-structure groupoid is a Lie groupoid whose source fibers are $G$-structures hence they yield the appropriate framework to study families of $G$-structures and, in particular, to deal with Cartan's realization problem.

\subsection{$G$-structure groupoids}
We denote by $\G\tto X$ a Lie groupoid with space of arrows $\G$ and space of objects $X$. We use the letters $\s$ and $\t$ for the source and target maps, the symbol $1_x$ for the identity arrow at $x\in X$ and $\gamma_1\cdot\gamma_2$ for the product of the composable arrows $(\gamma_1,\gamma_2)\in \G^{(2)}:=\G\timesst\G$. Also, we denote by $T^{\s} \G=\ker\d\s$ the tangent distribution to the source fibers. We will denote by $\Omega^k_R(\G,V)$ the space of right-invariant $k$-forms on $\G$ with values in $V$. By definition, these are $\s$-foliated forms with values in $V$, i.e., vector bundle maps $\wedge^k T^\s\G\to \G\times V$. The de Rham differential restricts to a differential $\d:\Omega^k_R(\G,V)\to \Omega^{k+1}_R(\G,V)$.

An action of a Lie group $G$ on a Lie groupoid $\G\tto X$ will be called a {\bf $G$-principal action} if:
\begin{enumerate}[(i)]
\item The action is locally free, effective and proper;
\item The source map $\s$ is $G$-invariant
\item The action is compatible with the groupoid multiplication:
\begin{equation}
\label{eq:principal:action} 
(\gamma_1\cdot \gamma_2)\, g= (\gamma_1\, g)\cdot \gamma_2,\quad \forall (\gamma_1,\gamma_2)\in\G^{(2)},\ g\in G.
\end{equation}
\end{enumerate}
In this case we also call $\G$ a {\bf $G$-principal groupoid}. By a {\bf morphism of $G$-principal groupoids} we mean a  $G$-equivariant groupoid morphism $\Phi:\G_1\to\G_2$ between $G$-principal groupoids.

For a $G$-principal groupoid $\G\tto X$ each source fiber $\s^{-1}(x)$ is a $G$-principal bundle over the orbifold $M=\s^{-1}(x)/G$. So a $G$-principal groupoid $\G\tto X$ is a family of $G$-principal bundles parameterized by $X$.

We are mostly interested in $G$-structures. Recalling the characterization of such structures given by Theorem \ref{thm:tensorial-embedding}, one is lead to the following:

\begin{definition}
Given a closed subgroup $G\subset \GL(n,\Rr)$, a {\bf $G$-structure groupoid} consists of a $G$-principal groupoid $\G\tto X$ equipped with a pointwise surjective 1-form $\Theta\in\Omega^1_R(\G,\Rr^n)$ satisfying:
\begin{enumerate}[(i)]
\item $\Theta$ is strongly horizontal: $\Theta_\gamma(v)=0$ iff $v=\tilde{\al}|_\gamma$, for some $\al\in\gg$;
\item $\Theta$ is $G$-equivariant: $g^*\Theta=g^{-1}\cdot\Theta$, for all $g\in G$.
\end{enumerate}
We call $\Theta$ the {\bf tautological form} of the $G$-structure groupoid.

A {\bf morphism} of $G$-structure groupoids $\Phi:\G_1\to\G_2$ is a morphism of $G$-principal groupoids which preserves the tautological forms: $\Phi^*\Theta_2=\Theta_1$.
\end{definition}

Therefore, each source fiber $\s^{-1}(x)$ of a $G$-structure groupoid is a $G$-structure over the orbifold $M=\s^{-1}(x)/G$ with tautological form the restriction $\Theta|_{\s^{-1}(x)}$.

\begin{example}
\label{ex:principal:bundle:principal:grpd}
A $G$-principal action on a manifold $P$ is the same thing as a $G$-principal action on the pair groupoid $\G=P\times P\tto P$. The two actions are related by:
\[  \G\times G\to \G,\quad (p_1,p_2)\, g:=(p_1g,p_2). \]
Condition \eqref{eq:principal:action} holds since we have:
\[ ((p_1,p_2)\cdot (p_2,p_3))\, g=(p_1,p_3)\, g=(p_1\, g,p_3)=((p_1,p_2)\, g)\cdot (p_2,p_3). \]

Similarly, a $G$-structure groupoid on the pair groupoid $P\times P\tto P$ is the same thing as an ordinary $G$-structure on $P\to P/G=M$. The tautological 1-forms $\theta\in\Omega^1(P,\Rr^n)$ and $\Theta\in\Omega^1_R(\G,\Rr^n)$ are related by:
\[ \Theta_{(p_1,p_2)}(v,0)=\theta_{p_1}(v). \]

Given a $G$-structure groupoid $\G\tto X$, each $\s$-fiber $\s^{-1}(x)\to \s^{-1}(x)/G$ has a $G$-structure, and we have a morphism of $G$-structure groupoids covering the target:
\[ 
\vcenter{
\xymatrix{\s^{-1}(x)\times \s^{-1}(x) \ar@<0.2pc>[d] \ar@<-0.2pc>[d]\ar[r] & \G\ar@<0.2pc>[d] \ar@<-0.2pc>[d]\\
\s^{-1}(x) \ar[r]_{\t} & X}}
\qquad (\gamma_1,\gamma_2)\mapsto \gamma_1\cdot \gamma_2^{-1}.
\]
\end{example}

There is a slightly different point of view on $G$-principal and $G$-structure groupoids, which will be useful when we introduce their infinitesimal versions.

First, given a $G$-principal groupoid $\G\tto X$ we have a $G$-action on $X$ defined by:
\begin{equation}
\label{eq:X:action}
X\times G\to X,\quad x\, g:=\t(1_x\, g).
\end{equation}
For this action, $\s:\G\to X$ is $G$-invariant and $\t:\G\to X$ is $G$-equivariant. 
 
Next, recall that given a (right) $G$-action on a manifold $X$ one can form the action Lie groupoid $X\rtimes G\tto X$: an arrow is a pair $(x,g)$ with source $x$ and target $xg$, and composition of arrows is given by:
\[ (y,h)\cdot (x,g)=(x,gh),\quad \text{if }y=xg. \]

Finally, one can define the groupoid morphism:
\begin{equation}
\label{eq:action:morphism}
\iota: X\rtimes G\to \G,\quad (x,g)\mapsto 1_x\, g.
\end{equation}
This will be called the \textbf{action morphism} of the the $G$-principal groupoid.

To state the next result, let us introduce the following notions. A groupoid morphism $\iota: X\rtimes G\to \G$ is called
\begin{enumerate}[(a)]
\item {\bf effective} if given $e\not=g\in G$ there exists an $x\in X$ with $\iota(x,g)\not=1_x$;
\item {\bf injective} if for any $x\in X$ and $g\in G$ one has $\iota(x,g)=1_x$ if and only if $g=e$;
\item {\bf locally injective} if for any $x\in X$ there is an open $e\in U_x\subset G$ such that $g\in U_x$ and $\iota(x,g)=1_x$ if and only if $g=e$.
\end{enumerate}

\begin{prop}
\label{prop:principal:G:action}
Let $\G\tto X$ be a Lie groupoid. If $\G\times G\to \G$ is a $G$-principal action, then:
\begin{enumerate}[(i)]
\item \eqref{eq:X:action} defines a $G$-action on $X$; 
\item \eqref{eq:action:morphism} defines an effective, locally injective, Lie groupoid morphism;
\item the action takes the form:
\begin{equation}
\label{eq:principal:action:formula}
\G\times G\to \G,\quad \gamma\, g:= \iota(\t(\gamma),g)\cdot \gamma.
\end{equation}
\end{enumerate}
Conversely, given an action $X\times G\to X$ and an effective, locally injective, Lie groupoid morphism $\iota: X\rtimes G\to \G$, \eqref{eq:principal:action:formula} defines a $G$-principal action on $\G$.
\end{prop}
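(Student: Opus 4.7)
The plan is to prove both directions of the equivalence by systematically exploiting the compatibility condition \eqref{eq:principal:action}, which is what ties the $G$-action together with the groupoid multiplication. The key observation that will organize the whole argument is that \eqref{eq:principal:action} with $\gamma_1 = 1_{\t(\gamma_2)}$ forces the target map $\t:\G\to X$ to be $G$-equivariant, so that \eqref{eq:X:action} really descends a $G$-action from $\G$ to $X$; more generally, the identity $\gamma = 1_{\t(\gamma)}\cdot\gamma$ combined with \eqref{eq:principal:action} immediately rewrites the $G$-action as $\gamma\cdot g = (1_{\t(\gamma)}\, g)\cdot \gamma = \iota(\t(\gamma),g)\cdot\gamma$, which is exactly \eqref{eq:principal:action:formula}. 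These two observations will cover item (iii) and half of item (i) essentially for free.

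For the forward direction, after proving $\t$ is equivariant I would check the action axioms for \eqref{eq:X:action} using $\t(1_x\, g) = x\cdot g$ and the associativity of the $G$-action on $\G$; the unit axiom is immediate from $1_x\, e=1_x$. For (ii), I would verify that $\iota(x,g):=1_x\, g$ is a groupoid morphism by computing
\[
\iota(xg,h)\cdot\iota(x,g) = (1_{xg}\, h)\cdot(1_x\, g) = (1_{xg}\cdot 1_x\, g)\, h = (1_x\, g)\, h = 1_x(gh) = \iota(x,gh),
\]
where the second equality is \eqref{eq:principal:action}. Effectiveness of $\iota$ will follow from effectiveness of the $G$-action on $\G$: if $g\ne e$ and $\gamma\, g\ne \gamma$, then applying \eqref{eq:principal:action} to $\gamma=1_{\t(\gamma)}\cdot\gamma$ forces $1_{\t(\gamma)}\, g\ne 1_{\t(\gamma)}$, giving the required $x=\t(\gamma)$. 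Local injectivity follows from local freeness: the isotropy of $1_x$ under the $G$-action is discrete, so on a small neighborhood $U_x\subset G$ of $e$ the only $g\in U_x$ with $1_x\, g=1_x$ is $g=e$.

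For the converse, starting from the data $(X\times G\to X,\iota)$ I would define the candidate action by \eqref{eq:principal:action:formula} and check the axioms directly: $\gamma\cdot e = \iota(\t(\gamma),e)\cdot\gamma = 1_{\t(\gamma)}\cdot\gamma = \gamma$, and for associativity use that $\iota$ is a morphism, together with $\t(\gamma\cdot g) = \t(\iota(\t(\gamma),g)) = \t(\gamma)\cdot g$, to compute
\[
(\gamma\, g)\, h = \iota(\t(\gamma)\, g,h)\cdot\iota(\t(\gamma),g)\cdot\gamma = \iota(\t(\gamma),gh)\cdot\gamma = \gamma(gh).
\]
The compatibility \eqref{eq:principal:action} follows from $\t(\gamma_1\cdot\gamma_2)=\t(\gamma_1)$ and groupoid associativity:
\[
(\gamma_1\cdot\gamma_2)\, g = \iota(\t(\gamma_1),g)\cdot(\gamma_1\cdot\gamma_2) = (\iota(\t(\gamma_1),g)\cdot\gamma_1)\cdot\gamma_2 = (\gamma_1\, g)\cdot\gamma_2.
\]
$G$-invariance of $\s$ is automatic since left multiplication preserves $\s$. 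Local freeness and effectiveness of the resulting $G$-action translate, via $\gamma\, g=\gamma\iff \iota(\t(\gamma),g)=1_{\t(\gamma)}$, into exactly the local injectivity and effectiveness hypotheses on $\iota$.

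The step I expect to require the most care is showing that the resulting action on $\G$ is \emph{proper} in the converse direction, since local injectivity and effectiveness are purely algebraic conditions and do not obviously control behavior at infinity. The natural approach is to reduce properness of the action on $\G$ to properness of the underlying $G$-action on $X$ (which one should assume, or derive from the existence of a proper principal action on $\G$ via $\t$): given compact sets $K_1,K_2\subset\G$, the set $\{g\in G:\exists\gamma\in K_1,\ \gamma\, g\in K_2\}$ maps into $\{g:\t(K_1)\cdot g\cap \t(K_2)\ne\emptyset\}$, which is relatively compact by properness on $X$, and the remaining control of $\gamma$ itself is then immediate. Everything else is bookkeeping around the two identities $\gamma=1_{\t(\gamma)}\cdot\gamma$ and \eqref{eq:principal:action}.
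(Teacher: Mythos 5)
Your argument is correct and follows essentially the same route as the paper, which only sketches these verifications and defers the details to \cite{FS19}: extract the action on $X$ and the formula $\gamma\, g=\iota(\t(\gamma),g)\cdot\gamma$ from \eqref{eq:principal:action} applied to $\gamma=1_{\t(\gamma)}\cdot\gamma$, translate effectiveness and local freeness of the action on $\G$ into effectiveness and local injectivity of $\iota$, and check the axioms directly for the converse. Your worry about properness is well founded and is a gap in the \emph{statement} rather than in your proof: effectiveness and local injectivity of $\iota$ alone do not force the action \eqref{eq:principal:action:formula} to be proper (an injective immersion of $\Rr$ onto an irrational line of $\Tt^2$, viewed as a groupoid over a point, satisfies both hypotheses yet yields a non-proper action), so in the converse one must additionally assume $G$ compact or $\iota$ proper; note that your fallback of deriving properness on $X$ ``from the existence of a proper principal action on $\G$'' is not available in that direction, whereas your reduction via $\iota(\t(\gamma),g)=(\gamma\, g)\cdot\gamma^{-1}$, with $\gamma$ confined to a compact set, is exactly how the extra hypothesis closes the argument.
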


For a proof of this proposition we refer to \cite{FS19}. It shows that we can define the $G$-principal action on $\G$ by specifying first a $G$-action on $X$ and then an effective, locally injective, groupoid morphism $\iota: X\rtimes G\to \G$. We will often use this alternative point of view.  From this perspective, a morphism of $G$-principal groupoids can be characterized as a morphism of groupoids
\[
\xymatrix@R=20pt{
\G_1\ar[r]^{\Phi}\ar@<0.2pc>[d] \ar@<-0.2pc>[d]  & \G_2\ar@<0.2pc>[d] \ar@<-0.2pc>[d]  \\
X_1\ar[r]_{\phi} & X_2
}
\]
which intertwines the actions morphisms: 
\[ \Phi\circ \iota_1=\iota_2\circ (\phi\times I). \]

The action morphism $\iota: X\rtimes G\to \G$ also allows us to define another action of $G$ on $\Gamma$, namely the {\bf action by inner automorphisms}:
\begin{equation}
\label{eq:inner:G:action}
\G\times G\to\G, \quad \gamma \odot g:=\iota(\t(\gamma),g)\cdot \gamma \cdot \iota(\s(\gamma), g)^{-1}.
\end{equation}
In general, the inner action of $G$ on $\Gamma$ \emph{does not determine} the original $G$-action on $\Gamma$. For example, the action morphism maybe non-trivial, while the inner action could be trivial. 

\subsection{Connections on $G$-structure groupoids}

The notion of connection on a $G$-principal groupoid $\G\tto X$, much like principal bundle connections, can be defined either as an invariant distribution or as a $\gg$-valued 1-form. We will choose the later approach and refer to  \cite{FS19} for the former, as well as the equivalence between the two approaches.

\begin{definition}
A \textbf{connection 1-form} on a $G$-principal groupoid $\G\tto X$ is a $\gg$-valued 1-form $\Omega\in\Omega^1_R(\G,\gg)$ satisfying:
\begin{enumerate}[(i)]
\item $\Omega$ is vertical: $\Omega(\tilde{\al})=\al$, for all $\al\in\gg$;
\item $\Omega$ is $G$-equivariant: $g^*\Omega=\Ad_{g^{-1}}\cdot \Omega$, for all $g\in G$.
\end{enumerate}
\end{definition}

The restriction of a connection 1-form $\Omega\in\Omega^1_R(\G,\gg)$ to each fiber $\s^{-1}(x)$ gives an ordinary connection 1-form $\omega\in\Omega^1(\s^{-1}(x),\gg)$. The corresponding horizontal distributions assemble to a  distribution on $\G$ given by:
\[ \H:=\{v\in T^{\s} \G:\Omega(v)=0\}. \]
The {\bf curvature 2-form} of a connection $\Omega$ is a $\gg$-valued 2-form 
\[ \Curv(\Omega)\in \Omega^2_R(\G,\gg), \]
which measures the failure of the horizontal distribution $\H$ in being integrable. It is defined by:
\[ \Curv(\Omega)(v,w):=\d \Omega(h(v),h(w)), \quad (v,w\in T^{\s} \G), \]
where $h:T^{\s} \G\to \H$ denotes the projection.  

The restriction of $\Curv(\Omega)$ to the s-fiber $\s^{-1}(x)$ is the usual curvature 2-form of the induced connection on $\s^{-1}(x)\to \s^{-1}(x)/G$. This leads immediately to the fact that a connection $\Omega$ on a $G$-principal groupoid $\G\tto X$ satisfies:
\begin{description}
\item[1st structure equation]
\[ \d\Omega=-\Omega\wedge \Omega +\Curv(\Omega); \]
\item[1st Bianchi's identity]
\[ \d\Curv(\Omega)|_{\H}=0. \]
\end{description}

Assume now that $\G\tto X$ is a $G$-structure groupoid with connection $\Omega$.  Denoting the tautological form by $\Theta$, we define the {\bf torsion of the connection} to be the right-invariant 2-form $\Tors(\Omega)\in\Omega^2_R(\G,\Rr^n)$ given by:
\[ \Tors(\Omega)(v,w)=\d\Theta(h(v),h(w)), \quad (v,w\in T^{\s} \G). \]
The restriction of $\Tors(\Omega)$ to the source fiber $\s^{-1}(x)$ is the (usual) torsion 2-form of the induced connection on $\s^{-1}(x)\to \s^{-1}(x)/G$ and we find that the following hold:
\begin{description}
\item[2nd structure equation]
\[ \d\Theta=-\Omega\wedge\Theta +\Tors(\Omega); \]
\item[2nd Bianchi's identity]
\[ \d\Tors(\Omega)|_{\H}= - (\Curv(\Omega)\wedge \Theta)|_{\H}. \]
\end{description}

One can also consider morphisms of $G$-structure groupoids with connection. They amount to a morphism of groupoids $\Phi:\G_1\to\G_2$ which is $G$-equivariant and preserves both the tautological and connection 1-forms:
\[ \Phi^*\Theta_2=\Theta_1,\quad \Phi^*\Omega_2=\Omega_1. \]

\subsection{$G$-structure algebroids}

We denote by $\Der(A)$ the space of \textbf{Lie algebroid derivations} of a Lie algebroid $A$. An element $D\in\Der(A)$ is a derivation of the vector bundle $A\to X$ that acts as a derivation of the bracket.  So $D:\Gamma(A)\to\Gamma(A)$ is a linear map for which there exists a vector field $\sigma_D\in\X(M)$ such that:
\[ D(fs)=fDs+\sigma_D(f) s \qquad (s\in\Gamma(A),\, f\in C^\infty(X)), \]
and moreover:
\[ D([s_1,s_2]_A)=[D(s_1),s_2]_A+[s_1,D(s_2)]_A, \qquad (s_1,s_2\in\Gamma(A)). \]
We call $\sigma_D$ the \textbf{symbol} of the derivation. The commutator of derivations turns $\Der(A)$ into a Lie algebra.

Given a Lie algebra $\gg$, by an infinitesimal $\gg$-action on $A$ we mean a Lie algebra map $\widehat{\psi}:\gg\to \Der(A)$. Composing $\widehat{\psi}$ with the symbol map gives an infinitesimal $\gg$-action on the base $X$ of the Lie algebroid $\psi:\gg\to \X(X)$, so that:
\[ \widehat{\psi}(\al)(fs)=f\widehat{\psi}(\al)(s)+\psi(\al)(f) s. \]

Consider a $G$-action on a Lie algeboid $A$:
\[ A\times G\to A,\quad (a,g)\mapsto a\odot g. \]
If the action is by Lie algebroid automorphisms, then it induces an infinitesimal $\gg$-action $\widehat{\psi}:\gg\to \Der(A)$, given by:
\[ \widehat{\psi}(\al)(s):=\left.\frac{\d }{\d t}\right|_{t=0} (\exp(t\al))^* s, \quad \al\in\gg,\, s\in\Gamma(A). \]
A $G$-action on a Lie algebroid $A$ is called a {\bf $G$-principal action} if
\begin{enumerate}[(i)]
\item The action is by Lie algebroid automorphisms;
\item  The infinitesimal action $\widehat{\psi}:\gg\to\Der(A)$ satisfies:
\[ \widehat{\psi}(\al)=[i(\al),-]. \]
where $i:X\rtimes \gg\to A$ is an injective algebroid morphism. 
\end{enumerate}
In this case, we call $A$ a {\bf $G$-principal algebroid} and $i:X\rtimes \gg \to A$ the {\bf action morphism}.

Notice that the action morphism $i:X\rtimes \gg \to A$ is part of the data defining a $G$-principal Lie algebroid: there could be more that one such morphism determining the same infinitesimal action $\widehat{\psi}:\gg\to\Der(A)$. When $G$ is connected, this morphism determines the $G$-action. 

\begin{definition}
Given a closed subgroup $G\subset \GL(n,\Rr)$, a {\bf $G$-structure algebroid} consists of a $G$-principal algebroid $A\to X$ with action morphism $i:X\rtimes \gg\to A$ equipped with a fiberwise surjective $A$-form $\theta\in\Omega^1(A,\Rr^n)$ satisfying:
\begin{enumerate}[(i)]
\item strong horizontality:
\[ \theta_x(a)=0 \quad\text{iff}\quad a=i(x,\al),\text{ for some }\al\in\gg. \]
\item $G$-equivariance:
\[ \theta_{x\cdot g}(a\odot g)=g^{-1}\cdot\theta_x(a),\quad \forall g\in G. \]
\end{enumerate}
\end{definition}

We call $\theta$ the {\bf tautological form} of the $G$-structure algebroid. The definition of a $G$-structure algebroid $A\to X$ implies that:
\[ \rank A=n+\dim G. \]
There is no restriction on the dimension of the base $X$, which can be arbitrary. 

\begin{example}
\label{ex:principal:bundle:algebroid}
Let $(P,\theta)$ be $G$-structure. Then the lifted $G$-action on the tangent bundle $A=TP\to P$ yields a $G$-structure algebroid with action morphism
\[ i:P\times\gg\to TP, \quad (p,\al)\mapsto \tilde{\al}_p, \]
and $A$-form $\theta$. This is the infinitesimal version of the $G$-structure groupoid of Example \ref{ex:principal:bundle:principal:grpd}.
\end{example}

\begin{example}
\label{ex:classifying:alegbroid}
Our main example is, of course, the classifying Lie algebroid $A_{(\theta,\omega)}$ of fully regular $G$-structure with connection $(P,\theta,\omega)$. We saw in Section \ref{sec:canonical:form} that this Lie algebroid comes with an injective algebroid morphism
\[ i:X_{(\theta,\omega)}\times\gg\to A_{(\theta,\omega)}, (x,\al)\mapsto (x,(0,\al)). \]
and an action $G\times A_{(\theta,\omega)}\to A_{(\theta,\omega)}$ by Lie algebroid automorphisms. One checks that the induced infinitesimal action $\widehat{\psi}:\gg\to\Der(A)$ satisfies:
\[ \widehat{\psi}(\al)=[i(\al),-]. \]
The projection
\[ A_{(\theta,\omega)}=X_{(\theta,\omega)}\times(\Rr^n\oplus\gg)\to \Rr^n \]
defines a form $\theta\in\Omega^1(A_{(\theta,\omega)},\Rr^n)$ which satisfies all the conditions of the definition.
\end{example}

There is a differentiation functor taking $G$-principal groupoids to $G$-principal algebroids. We state here the main results and refer to \cite{FS19} for more details and proofs.

\begin{prop}
\label{prop:principal:G:action:integrable}
Let $\G\tto X$ be a $G$-principal Lie groupoid with action morphism $\iota:X\rtimes G\to \G$. Then its Lie algebroid $A\to X$ is a $G$-principal algebroid with action morphism $i=\iota_*:X\rtimes \gg \to A$. 

Conversely, given a $G$-principal algebroid $A\to X$, a Lie groupoid $\G\tto X$ integrating $A$ admits a (unique) $G$-principal action inducing the $G$-action on $A$ provided the action morphism $i:X\rtimes \gg\to A$ integrates to an effective Lie groupoid morphism:
\[ \iota: X\rtimes G\to \G. \]
\end{prop}

\begin{remark}
\label{rem:correspodence:G:structures}
Assume that a $G$-principal groupoid $\G$ corresponds to a $G$-principal algebroid $A$, as in the previous proposition. Then the fact that right-invariant forms on $\G$ are in bijection with $A$-forms leads immediately to a correspondence of $G$-structures, namely:
\begin{equation}
\label{correspondence:G:structures:grpd:algbrd}
\left\{\txt{$\G$-tautological forms\\ $\Theta\in\Omega^1_R(\G,\Rr^n)$ \\ \,}\right\}
\tilde{\longleftrightarrow}
\left\{\txt{$A$-tautological forms\\ $\theta\in\Omega^1(A,\Rr^n)$\\ \,} \right\}.
\end{equation}
\end{remark}


There is also a natural notion of {\bf morphism of $G$-principal Lie algebroids}: it is a Lie algebroid morphism between $G$-principal Lie algebroids
 \[
 \xymatrix@R=20pt{
 A_1\ar[d]\ar[r]^{\Phi} & A_2\ar[d] \\
 X_1\ar[r]_{\phi} & X_2
 }
 \]
 which is $G$-equivariant and which intertwines the action morphisms:
 \[ \Phi\circ i_1=i_2\circ(\phi\times I). \]
A morphism of $G$-principal groupoids $\Phi:\G_1\to \G_2$ induces a morphism of the associated $G$-principal algebroids $\Phi_*:A_1\to A_2$. The converse, in general, fails unless $\G_1$ is the so called \emph{canonical $G$-integration} of $A_1$ -- see Section \ref{sec:G:integration} and \cite{FS19}.

One defines a {\bf morphism} of $G$-structure algebroids to be a morphism of $G$-principal algebroids $\Phi:A_1\to A_2$ which additionally preserves the tautological forms: $\Phi^*\theta_2=\theta_1$. A morphism of $G$-structure algebroids $\Phi:A_1\to A_2$ is necessarily a fiberwise isomorphism.

\subsection{Connections on $G$-structure algebroids}

Recalling that the right-invariant forms on a Lie groupoid $\Gamma$ correspond to $A$-forms on its Lie algebroid, one is lead to the notion of connection on a $G$-principal algebroid:

\begin{definition}
A \textbf{connection 1-form} on a $G$-principal algebroid $A\to X$ is a $\gg$-valued $A$-form $\omega\in\Omega^1(A,\gg)$ satisfying:
\begin{enumerate}[(i)]
\item It is vertical relative to the morphism $i:X\rtimes\gg\to A$:
\[ \omega(i(x,\al))=\al,\quad \forall\, \al\in\gg,\ x\in M; \]
\item It is $G$-equivariant for the $G$-action on $A$ by automorphisms:
\[ \omega_{xg}(a \odot g)=\Ad_{g^{-1}}\cdot \omega_x(a),\quad \forall\, a\in A_x,\ g\in G.\]
\end{enumerate}
\end{definition}

Notice that a connection 1-form $\omega\in\Omega^1(A,\gg)$ yields a \emph{horizontal sub-bundle} $H=\ker\omega\subset A$ such that:
\[ A=H\oplus \Im i. \]
This sub-bundle is $G$-invariant and determines the connection 1-form uniquely -- see \cite{FS19}.  

\begin{example}
Returning to our main example of the classifying Lie algebroid $A_{(\theta,\omega)}$ of a fully regular $G$-structure with connection $(P,\theta,\omega)$, this is a $G$-structure algebroid -- see Example \ref{ex:classifying:alegbroid} --  with connection form $\omega\in\Omega^1(A_{(\theta,\omega)},\gg)$ the projection:
\[ A_{(\theta,\omega)}=X_{(\theta,\omega)}\times(\Rr^n\oplus\gg)\to \gg. \]
\end{example}

Naturally, one defines the {\bf curvature 2-form} of a connection $\omega$ to be the $A$-form $\Curv(\omega)\in\Omega^2(A,\gg)$ given by:
\[ \Curv(\omega)(a_1,a_2):=\d_A\omega(h(a_1),h(a_2)), \quad a_i\in A, \]
where $h:A\to H$ denotes the projection and $\d_A:\Omega^k(A,V)\to \Omega^{k+1}(A,V)$ the Lie algebroid differential. The connection is flat, i.e., $\Curv(\omega)=0$ if and only if $H\subset A$ is a Lie subalgebroid, and
the following hold:
\begin{description}
\item[1st structure equation]
\[ \d_A\omega=-\omega\wedge\omega +\Curv(\omega); \]
\item[1st Bianchi's identity]
\[ \d_A\Curv(\omega)|_H=0. \]
\end{description}

Assuming now that $A\to X$ is a $G$-structure algebroid with connection 1-form $\omega\in\Omega^1(A,\gg)$, we define the {\bf torsion of the connection} to be the $\Rr^n$-valued $A$-form $\Tors(\omega)\in\Omega^2(A,\Rr^n)$ given by:
\[ \Tors(\omega)(a_1,a_2):=\d_A\theta(h(a_1),h(a_2)), \quad a_i\in A, \]
where $\theta\in\Omega^1(A,\Rr^n)$ denotes the tautological 1-form. The torsion and the curvature satisfy:
\begin{description}
\item[2nd structure equation]
\[ \d_A\theta=-\omega\wedge\theta +\Tors(\omega); \]
\item[2nd Bianchi's identity]
\[ \d_A\Tors(\omega)|_H=-(\Curv(\omega)\wedge \theta)|_H. \]
\end{description}

\begin{remark}
\label{rem:correspodence:connections}
Under the correspondence \eqref{correspondence:G:structures:grpd:algbrd} between $G$-structure groupoids and $G$-structure algebroids, if a connection $\Omega$ on $\G$ corresponds to a connection $\omega$ on $A$, then the associated torsions $\Tors(\Omega)$ and $\Tors(\omega)$ correspond to each other, so that:
\[ \Tors(\omega)_x=\Tors(\Omega)_{1_x}, \quad \forall x\in X. \]
Moreover, the structure equations and the Bianchi identities also correspond to each other.
\end{remark}

Finally, there is also a notion of morphism of $G$-structure algebroids with connection: it is a morphism of algebroids $\Phi:A_1\to A_2$ which is $G$-equivariant, intertwines the action morphisms, and preserves the tautological and connection 1-forms:
\[ \Phi^*\theta_2=\theta_1,\quad \Phi^*\omega_2=\omega_1. \]

\subsection{Canonical form of a $G$-structure algebroid with connection}
\label{sec:canonical:form:2}

Assume that $\pi:A\to X$ is a $G$-structure algebroid with tautological form $\theta$ and connection form $\omega$. We obtain a vector bundle isomorphism to the trivial bundle:
\[ 
A\stackrel{\cong}{\longrightarrow} X\times (\Rr^n\oplus\gg), \quad a\mapsto (\pi(x),\theta(a),\omega(a)). 
\]
We can re-express the torsion and the curvature under this isomorphism as maps:
\begin{itemize}
\item $T: X \to \hom(\wedge^2\Rr^n, \Rr^n)$: $T(x)(v,w)=\Tors(\omega)_x(v,w)$;
\item $R : X\to \hom(\wedge^2\Rr^n,\gg)$: $R(x)(v,w)=\Curv(\omega)_x(v,w)$.
\end{itemize}
Moreover, under this isomorphism:
\begin{itemize}
\item the $G$-action on $A$ takes the form: $(x, u, \al)\odot g = (x\, g, g^{-1}\, u, \Ad_{g^{-1}}\cdot \al)$;
\item the action morphism is the inclusion $i:X\rtimes \gg\to A$, $(x,\al))\mapsto (x,0,\al)$;
\item the tautological form is the projection $\theta:X\times(\Rr^n\oplus\gg)\to \Rr^n$;
\item the connection 1-form is the projection $\omega:X\times(\Rr^n\oplus\gg)\to \gg$.
\end{itemize}
One checks also that the Lie bracket is then given on constant sections by:
\begin{equation}
\label{eq:canonical:Lie:bracket}
[(u,\al), (v, \be)] = (\al\cdot v - \be\cdot u - T(u,v), [\al,\be]_\gg - R(u,v)),
\end{equation}
while the anchor takes the form:
\begin{equation}
\label{eq:canonical:anchor} 
\rho(u,\al)=F(u) + \psi(\al), \quad (u,\al)\in\Rr^n\oplus\gg,
\end{equation}
where $F:X\times\Rr^n\to TX$ is a $G$-equivariant bundle map and $\psi:\gg\to \X(X)$ denotes the infinitesimal $G$-action on $X$. We call this the \textbf{canonical form} of a $G$-structure Lie algebroid with connection.

\begin{remark}
Our usage here of the term \emph{canonical form} is consistent with the usage in Section \ref{sec:canonical:form} for the case of the classifying Lie algebroid of a $G$-structure with connection. One may wonder, if any \emph{transitive} $G$-structure algebroid with connection arises as the classifying Lie algebroid of a $G$-structure with connection. This is a special instance of the \emph{realization problem} to be discussed in the next section.
\end{remark}

\subsection{$G$-integrations}
\label{sec:G:integration}
A fundamental question, suggested by the results above, is the $G$-integrability problem:
\begin{itemize}
\item When does a $G$-structure algebroid with connection arise from a $G$-structure groupoid with connection?
\end{itemize}
A complete solution to this problem is presented in \cite{FS19}. Here we will give a brief overview, focusing on the results most relevant for the theory of (fully regular) $G$-structures.

A first observation is that the connection and tautological form do not play any role here -- see Remarks \ref{rem:correspodence:G:structures} and \ref{rem:correspodence:connections}.  Therefore, we can assume that $A\to X$ is a $G$-principal algebroid and look for a $G$-principal groupoid $\G\tto X$ integrating it. If the latter exists we will say that $A$ is \emph{$G$-integrable} and call $\G$ a \emph{$G$-integration} of $A$.

A Lie algebroid $A\to X$ may fail to be integrable and the obstructions to integrability are well understood (see \cite{CrainicFernandes,CrainicFernandes:lectures}). When $A$ is integrable it may have many integrations, but among the source-connected integrations, there is a unique (up to isomorphism) maximal integration $\Sigma(A)\tto X$.  One can characterize the groupoid $\Sigma(A)$ as the one with 1-connected source fibers.

For a $G$-principal algebroid $A$, it may happen that it is integrable, but not $G$-integrable. If $A$ is $G$-integrable, one can also look for a maximal source connected $G$-integration.  Such integration always exist and this is the version of Lie's 1st theorem for $G$-integrations:

\begin{theorem}[Lie I \cite{FS19}]
\label{thm:canonical:G:integration}
Let $A\to X$ be a $G$-principal algebroid  which is $G$-integrable. Then there exists a unique (up to isomorphism) $G$-principal groupoid $\Sigma_G(A)\tto X$ which is characterized by either of the following:
\begin{enumerate}[(a)]
\item $\Sigma_G(A)$ is an $\s$-connected $G$-integration and the orbifold fundamental groups of $\s^{-1}(x)/G$ are trivial;
\item $\Sigma_G(A)$ is maximal among $\s$-connected $G$-integrations of $A$: for any $\s$-connected $G$-integration $\G$ there exists a unique \'etale, surjective, morphism of $G$-principal groupoid $\Sigma_G(A)\to\G$. 
\end{enumerate}
\end{theorem}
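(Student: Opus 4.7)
The plan is to construct $\Sigma_G(A)$ by an orbifold-cover construction on the source-fibers modulo $G$, then verify the two characterizations are equivalent and determine the groupoid up to isomorphism. Throughout, I will use the fact that in any $G$-principal groupoid $\G\tto X$, each $\s$-fiber $\s^{-1}(x)$ is a $G$-principal bundle over the orbifold $M_x := \s^{-1}(x)/G$, so the natural ``simply-connected'' condition for $\Sigma_G(A)$ is triviality of the orbifold fundamental group $\pi_1^{\mathrm{orb}}(M_x)$, not of $\s^{-1}(x)$ itself.

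\textbf{Existence.} Since $A$ is $G$-integrable by hypothesis, fix some $G$-integration $\G_0\tto X$. For each $x\in X$, let $\widetilde{M}_x\to M_x$ be the orbifold universal cover of the source-fiber orbit, and let $\widetilde{\s^{-1}(x)}\to \s^{-1}(x)$ be the pulled-back $G$-principal bundle over $\widetilde{M}_x$. Set
\[
\Sigma_G(A) := \bigsqcup_{x\in X} \widetilde{\s^{-1}(x)},
\]
with source map sending the fiber labelled by $x$ to $x$. The groupoid structure is inherited from $\G_0$ via the projection $\Sigma_G(A)\to\G_0$ once we specify target and multiplication: this is done exactly as in the classical construction of $\Sigma(A)$, using the fact that $A$-paths modulo $A$-homotopy supply an explicit $\s$-fiber-wise cover, but applied here in the orbifold sense (i.e.\ modulo $G$ on the base). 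The canonical $G$-action lifts from $\G_0$ because the covering was taken $G$-equivariantly, and the relation \eqref{eq:principal:action} is preserved under this lift.

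\textbf{Groupoid structure and $G$-principality.} I would next check that $\Sigma_G(A)$ carries a smooth structure making it into a Lie groupoid integrating $A$. Locally, the orbifold cover $\widetilde{M}_x\to M_x$ is a manifold cover on $G$-slices, so smoothness of the structure maps is inherited from $\G_0$ via the étale covering $\Sigma_G(A)\to\G_0$. The induced $G$-action is still locally free, effective, proper, and satisfies \eqref{eq:principal:action}; the action morphism $\iota:X\rtimes G\to\Sigma_G(A)$ is obtained by lifting the one for $\G_0$ across the cover. Hence $\Sigma_G(A)$ is a $G$-principal groupoid, and by construction the source-fiber orbit $\widetilde{M}_x$ has trivial orbifold fundamental group. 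This establishes property (a).

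\textbf{Universal property and equivalence of (a)/(b).} Given any $\s$-connected $G$-integration $\G$ of $A$, its source-fiber orbit $M'_x:=\s^{-1}(x)/G$ is an orbifold, and any choice of base-point identification together with orbifold covering theory furnishes a unique orbifold covering $\widetilde{M}_x\to M'_x$. Pulling back the $G$-bundle structures produces a unique étale, surjective morphism of $G$-principal groupoids $\Sigma_G(A)\to\G$; it is a groupoid morphism because it is defined from the $A$-path construction, and it automatically preserves the tautological data. This is property (b). Conversely, (b) implies (a): if $\Sigma_G(A)$ were not orbifold-simply-connected in its $\s$-fiber orbits, the orbifold universal cover would give a strictly larger $\s$-connected $G$-integration, contradicting maximality. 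Uniqueness up to isomorphism then follows from the universal property applied to two candidates satisfying (a) (or (b)): the two comparison morphisms compose to identities on both sides.

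\textbf{Anticipated main obstacle.} The classical Lie I for Lie algebroids, as well as the integration of $A$-morphisms (Lie II), furnish $\Sigma(A)$ as a set of $A$-homotopy classes of $A$-paths and gives smoothness via standard Banach-manifold quotient arguments. The main difficulty here is the orbifold refinement: one must justify that the relevant orbifold universal cover of $M_x$ exists, is functorial in $x$ in the sense needed to assemble a smooth groupoid, and supports the $G$-lift compatibly with the groupoid multiplication. Equivalently, one must ensure that the action morphism $i:X\rtimes\gg\to A$ integrates through the cover to an \emph{effective} and \emph{locally injective} groupoid morphism $\iota:X\rtimes G\to \Sigma_G(A)$; this is precisely where the hypothesis of $G$-integrability is used (without it, only an integration of $i$ on the identity component $G_0$ is automatic, by Lie II applied to $\Sigma(A)$).
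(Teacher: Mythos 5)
Your route is genuinely different from the paper's. You build $\Sigma_G(A)$ ``bottom-up'', as the fiberwise orbifold-universal cover of the source fibers of an arbitrary $G$-integration $\G_0$. The paper goes ``top-down'': since $G$-integrability implies integrability, it takes the source-$1$-connected integration $\Sigma(A)$, integrates the action morphism $i:X\rtimes\gg\to A$ to $\tilde{\iota}:X\rtimes\tilde{G}\to\Sigma(A)$ by the ordinary Lie II, forms the $G$-monodromy bundle $\widetilde{\cN}^G:=\tilde{\iota}(\pi_1(G)\times X)$, and sets $\Sigma_G(A):=\Sigma(A)/\widetilde{\cN}^G$. The two constructions produce the same object (the subgroup of $\pi_1(\s_0^{-1}(x))$ classifying your pulled-back cover is exactly the image of $\pi_1(G)$, i.e.\ the fiber of $\widetilde{\cN}^G$), and your version has the advantage that property (a) holds by construction, whereas the paper has to extract it from the long exact sequence of stack homotopy groups of $G\to\s^{-1}(x)\to\s^{-1}(x)/G$.

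However, there is a genuine gap at the assembly step. You assert that $\bigsqcup_x\widetilde{\s^{-1}(x)}$ is a Lie groupoid with ``smoothness of the structure maps\dots inherited from $\G_0$ via the \'etale covering $\Sigma_G(A)\to\G_0$'' --- but that this map is an \'etale covering of \emph{Lie groupoids} is precisely what has to be proved; a disjoint union of fiberwise covers carries no smooth structure a priori, and the multiplication requires lifting the right-translations $R_\gamma$ to the covers compatibly with a coherent choice of basepoints over the identities. The analogous step for $\Sigma(A)$ itself is nontrivial (it is where the $A$-path or monodromy-groupoid machinery does real work), and for an intermediate cover one must additionally know that the bundle of deck groups is a smooth, uniformly discrete subbundle of the isotropy. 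This is exactly what the paper's quotient description delivers for free: $\widetilde{\cN}^G$ lies in the kernel of the covering $\Sigma(A)\to\G_0$, hence is discrete, and one checks separately that it is a normal bundle of subgroups contained in the center of the isotropy, so $\Sigma(A)/\widetilde{\cN}^G$ is automatically a Lie groupoid to which the $G$-action and action morphism descend. Your argument would be complete if you either imported this quotient description or supplied an independent proof that the fiberwise orbifold covers assemble smoothly; as written, that step is circular. (A second, smaller imprecision: in (b) the uniqueness and multiplicativity of the comparison morphism should come from Lie II for $G$-principal groupoids, not from an appeal to ``the $A$-path construction'' grafted onto a covering-space argument.)
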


\begin{example} 
\label{ex:canonical:integ:principal:bundle}
Let $\pi: P \to M$ be a connected principal $G$-bundle. We saw in Example \ref{ex:principal:bundle:algebroid} that $TP\to P$ is $G$-principal algebroid.  The canonical $G$-integration of $TP$ can be obtained as follows. Let $q: \widetilde{M} \to M$ be the universal covering space of $M$ and consider the pullback diagram
\[\xymatrix{q^*P \ar[d]_\pi \ar[r]^{\hat{q}} & P\ar[d]^\pi\\
\widetilde{M} \ar[r]_q & M.}\]
On the one hand $q^*P$ is a principal $G$-bundle over $\widetilde{M}$ and therefore caries a right principal $G$-action. On the other hand, $q^*P$ is a $\pi_1(M)$-covering of $P$. The canonical $G$-integration of $TP$ is the gauge groupoid corresponding to the $\pi_1(M)$-principal bundle $\hat{q}: q^*P \to P$:
\[ \Sigma_G(TP)=(q^*P\times q^*P)/\pi_1(M)\tto P, \]
with the $G$-action $[(p_1,p_2)]\, g:=[(p_1g,p_2)]$. This $G$-integration covers the pair groupoid $P\times P\tto P$, which is also a $G$-integration -- see Example \ref{ex:principal:bundle:principal:grpd}.
\end{example}

Using the canonical $G$-integration, there is also a version of Lie's 2nd Theorem:

\begin{theorem}[Lie II \cite{FS19}]
If $\phi: A \to B$ is a morphism between two $G$-integrable $G$-principal algebroids, then there exists a unique morphism between their canonical $G$- integrations:
\[ \Phi: \Sigma_G(A) \to \Sigma_G(B),\qquad \Phi_*=\phi \] 
\end{theorem}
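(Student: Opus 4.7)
The plan is to bootstrap this statement from the classical Lie II theorem for Lie algebroids, using the uniqueness clauses to transport all the $G$-principal data. Since $G$-integrability implies ordinary integrability, we may consider the source-simply-connected integrations $\Sigma(A)\tto X_A$ and $\Sigma(B)\tto X_B$ in the ordinary sense. Classical Lie II produces a unique Lie groupoid morphism $\widetilde{\Phi}:\Sigma(A)\to\Sigma(B)$ with $\widetilde{\Phi}_*=\phi$.

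The second step is to upgrade $\widetilde{\Phi}$ to a morphism respecting the $G$-principal structure. The $G$-action on $A$ by Lie algebroid automorphisms lifts, again by classical Lie II, to an action on $\Sigma(A)$ by groupoid automorphisms; similarly for $\Sigma(B)$. Since $\phi$ is $G$-equivariant, for each $g\in G$ both $g\circ\widetilde{\Phi}$ and $\widetilde{\Phi}\circ g$ are Lie groupoid morphisms whose derivatives coincide, so uniqueness in Lie II forces them to agree and $\widetilde{\Phi}$ is $G$-equivariant. An analogous uniqueness argument applied to the integrations of the action morphisms $i_A: X_A\rtimes\gg\to A$ and $i_B: X_B\rtimes\gg\to B$ shows that $\widetilde{\Phi}$ intertwines them at the groupoid level.

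The crux is to descend $\widetilde{\Phi}$ to a morphism $\Phi:\Sigma_G(A)\to\Sigma_G(B)$ between the canonical $G$-integrations. By part (b) of Theorem \ref{thm:canonical:G:integration}, there are \'etale surjective morphisms of $G$-principal groupoids $p_A:\Sigma(A)\to\Sigma_G(A)$ and $p_B:\Sigma(B)\to\Sigma_G(B)$, arising from the orbifold universal covers of the source fibers modulo $G$. One must show that $\widetilde{\Phi}$ sends $\ker p_A$ into $\ker p_B$. This amounts to checking that the $G$-equivariant restriction of $\widetilde{\Phi}$ to each source fiber induces a map on the orbifold quotients $\s_A^{-1}(x)/G\to\s_B^{-1}(\phi(x))/G$ (using the $G$-equivariance and the intertwining of action morphisms from Step~2), and hence preserves the classes of loops that become null-homotopic in the orbifold quotient. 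Once this is established, the factorization $\Phi:=p_B\circ\widetilde{\Phi}\circ p_A^{-1}$ is a well-defined morphism of $G$-principal groupoids, smooth because $p_A$ is \'etale, and it integrates $\phi$.

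Uniqueness follows from the universal properties at hand: given any other $G$-principal morphism $\Phi'$ with $\Phi'_*=\phi$, the composite $\Phi'\circ p_A:\Sigma(A)\to\Sigma_G(B)$ is a Lie groupoid morphism integrating $\phi$, as is $p_B\circ\widetilde{\Phi}$; classical Lie II applied to the source-simply-connected $\Sigma(A)$ forces them to agree, whence $\Phi'=\Phi$. The main obstacle is the descent in Step~3, where one must precisely control how the orbifold fundamental group $\pi_1^{\mathrm{orb}}(\s^{-1}(x)/G)$ is transported by $\widetilde{\Phi}$; this is the essential technical content of the theorem and is where the interplay between the $G$-action, the action morphism, and the orbifold structure of the source fibers must be combined carefully.
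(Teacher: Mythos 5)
Your Steps 1, 2 and 4 are sound and match the structure of the actual argument, but Step 3 --- which you yourself identify as ``the essential technical content'' --- is not actually carried out: you write ``one must show that $\widetilde{\Phi}$ sends $\ker p_A$ into $\ker p_B$,'' reduce this to a statement about how $\widetilde{\Phi}$ transports orbifold fundamental groups of the quotients $\s^{-1}(x)/G$, and then conclude ``once this is established\dots''. That is a placeholder, not a proof, and the route you sketch for filling it (descending $\widetilde{\Phi}$ to the orbifold quotients of source fibers and tracking null-homotopic loops) is both unexecuted and unnecessarily indirect. It also leans on characterization (a) of the canonical $G$-integration, whereas the descent is immediate from its \emph{construction}.

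The point you are missing is that the fact you need is already contained in your Step 2. By construction, $\Sigma_G(A)=\Sigma(A)/\widetilde{\cN}^G(A)$ where $\widetilde{\cN}^G(A)=\tilde{\iota}_A(\pi_1(G)\times X)$ is the image of $\pi_1(G)\subset\tilde{G}$ under the morphism $\tilde{\iota}_A:X\rtimes\tilde{G}\to\Sigma(A)$ integrating the action morphism $i_A$; in particular $\ker p_A=\widetilde{\cN}^G(A)$, and likewise for $B$. Since $\phi\circ i_A=i_B\circ(\varphi\times I)$, the uniqueness clause of classical Lie~II (which you already invoke) gives the commutation $\widetilde{\Phi}\circ\tilde{\iota}_A=\tilde{\iota}_B\circ(\varphi\times I)$ on $X\rtimes\tilde{G}$. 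Restricting to $\pi_1(G)\subset\tilde{G}$ yields at once $\widetilde{\Phi}\big(\widetilde{\cN}^G(A)\big)\subset\widetilde{\cN}^G(B)$, so $\widetilde{\Phi}$ descends to $\Phi:\Sigma_G(A)\to\Sigma_G(B)$ with no mention of orbifold homotopy; that $\Phi_*=\phi$ then follows because $p_A$ and $p_B$ are \'etale, and your uniqueness argument goes through as written. One further caution about your Step 2: the $G$-\emph{principal} action need not exist on $\Sigma(A)$ (that is precisely what can obstruct $G$-integrability --- only $\tilde{\iota}_A$, not $\iota_A$, is guaranteed to exist on $\Sigma(A)$), so you should phrase the equivariance of $\widetilde{\Phi}$ in terms of the $G$-action by automorphisms and the morphisms $\tilde{\iota}$, and obtain the $G$-principal compatibility of $\Phi$ only after descending, from the fact that $\Phi$ intertwines the induced action morphisms $\iota_A$, $\iota_B$.
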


Let us now turn to Lie's 3rd Theorem. First, if a $G$-principal algebroid $A\to X$ is $G$-integrable then it is obviously integrable. The converse, however, does not hold: a $G$-principal algebroid $A\to X$ maybe integrable without being $G$-integrable. For a $G$-principal algebroid $A\to X$ there are certain \textbf{$G$-monodromy groups} $\cN^G_x$ which contain the usual monodromy groups obstructing integrability:
\[ \cN_x\subset \cN^G_x\subset \Ker\rho_x. \]
Then one has:

\begin{theorem}[Lie III \cite{FS19}]
Let $A\to X$ be a $G$-principal algebroid. Then $A$ is $G$-integrable if and only if the $G$-monodromy groups are uniformly discrete, i.e., if and only if there is an open neighborhood $U\subset A$ of the zero section such that:
\[ \cN^G_x\cap U=\{0_x\}, \quad \forall x\in X. \]
\end{theorem}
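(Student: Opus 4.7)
The plan is to adapt the classical proof of Lie's third theorem for Lie algebroids of Crainic--Fernandes to the $G$-equivariant setting, building the canonical $G$-integration $\Sigma_G(A)$ as a quotient of the Weinstein groupoid $\Sigma(A)$. The forward implication should be essentially bookkeeping, while the reverse will require a delicate quotient construction whose smoothness is exactly what the uniform discreteness hypothesis enforces.

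For the direction ($\Rightarrow$), suppose $\G\tto X$ is a $G$-principal integration of $A$. Since $\G$ integrates $A$ as a Lie algebroid, the classical Lie III gives uniform discreteness of the ordinary monodromy groups $\cN_x$. To handle the extra part of $\cN_x^G\supset \cN_x$, which is supposed to measure the failure of the action morphism $i:X\rtimes\gg\to A$ to integrate to an effective groupoid morphism, I would use Proposition \ref{prop:principal:G:action:integrable}: it gives the action morphism $\iota:X\rtimes G\to \G$, which is effective and locally injective. Combined with the exponential map of $\G$, local injectivity of $\iota$ produces a uniform neighborhood $U\subset A$ of the zero section along which $\cN_x^G\cap U=\{0_x\}$.

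For the direction ($\Leftarrow$), assume the $G$-monodromy groups are uniformly discrete. Since $\cN_x\subset\cN_x^G$, classical Lie III yields the Weinstein groupoid $\Sigma(A)$ integrating $A$. The construction of $\Sigma_G(A)$ would proceed in four steps: (i) integrate the infinitesimal $G$-action on $A$ by derivations $\widehat{\psi}:\gg\to\Der(A)$ to a $G$-action on $\Sigma(A)$ by Lie groupoid automorphisms, using Lie II for Lie algebroid actions on the $\s$-simply connected integration; (ii) integrate the algebroid morphism $i:X\rtimes\gg\to A$ to a groupoid morphism $\hat\iota:X\rtimes\widetilde G\to\Sigma(A)$, where $\widetilde G$ is the universal cover of $G$; (iii) identify the kernel $\ker\hat\iota$, together with the covering subgroup $\pi_1(G)\subset \widetilde G$, as a normal subgroupoid of $\Sigma(A)$ whose isotropy over $x$ is controlled precisely by $\cN_x^G$; (iv) quotient $\Sigma(A)$ by this normal subgroupoid to obtain $\Sigma_G(A)$, and check via Proposition \ref{prop:principal:G:action} that the induced action is $G$-principal.

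The main obstacle I expect is step (iv), namely showing that the quotient in fact carries a smooth Lie groupoid structure and not merely a topological one. This is exactly the analogue of the delicate point in the classical Crainic--Fernandes proof, where uniform discreteness of monodromy is required to make the naive quotient $\Sigma(A)/\cN$ smooth. Here the uniform discreteness of $\cN_x^G$ should play the same role, ensuring the normal subgroupoid we quotient by is \'etale with uniformly discrete fibers over $X$, so that the quotient inherits a smooth structure compatible with the anchor, bracket, and $G$-action. Once smoothness is established, functoriality of integration together with the characterization in Proposition \ref{prop:principal:G:action} should yield that $\Sigma_G(A)$ is a $G$-principal groupoid integrating $A$ and recovering the prescribed infinitesimal data.
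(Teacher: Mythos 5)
Your plan is essentially the proof in \cite{FS19} (reproduced in the companion material): for ($\Rightarrow$) one factors $\tilde{\iota}:X\rtimes\widetilde{G}\to\Sigma(A)$ through $\iota:X\rtimes G\to\G$, so that $\widetilde{\cN}^G=\tilde{\iota}(\pi_1(G)\times X)$ lands in the kernel of the covering $\Sigma(A)\to\G$ and is hence uniformly discrete, and for ($\Leftarrow$) one sets $\Sigma_G(A):=\Sigma(A)/\widetilde{\cN}^G$, with uniform discreteness of $\cN^G_x$ guaranteeing smoothness of the quotient and normality/centrality of $\widetilde{\cN}^G$ coming from the integrated inner $G$-action, exactly as in your steps (i)--(iv). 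The only slip is in step (iii): the normal subgroupoid one quotients by is the \emph{image} of $X\times\pi_1(G)$ under $\hat{\iota}$, not the kernel of $\hat{\iota}$; with that correction your argument coincides with the paper's.
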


Once we known that a $G$-principal algebroid $A$ is $G$-integrable, it follows from Theorem \ref{thm:canonical:G:integration} that any $\s$-connected $G$-integration $\G$ of $A$ can be obtained as a quotient of the canonical $G$-integration
\[ \G:=\Sigma_G(A)/\Delta\tto X , \]
where $\Delta\subset \Sigma_G(A)$ is any discrete bundle of subgroups satisfying:
\begin{enumerate}[(a)]
\item $\Delta_x$ is contained in the center of the isotropy group $\Sigma_G(A)_x$;
\item the image of the action morphism $\iota:X\rtimes G\to \Sigma_G(A)$ intersects $\Delta$ only in the identity section.
\end{enumerate}

\begin{example}
\label{ex:curvature:2}
Consider the $\SO_n(\Rr)$-structure Lie algebra $A=\Rr^n\oplus\so_n(\Rr)$, with Lie bracket in canonical form:
\[ [(u,\al), (v, \be)] = (\al\cdot v - \be\cdot u, [\al,\be] - R(u,v)), \]
where:
\[ R(u,v)(w)=\kappa\left(\langle w,v\rangle u-\langle w,u\rangle v\right). \]
As we saw in Section \ref{ex:space:form}, this is the classifying Lie algebroid associated with an oriented Riemannian manifold $(M,\eta)$ with constant scalar curvature $\kappa$. Depending on the value of $\kappa$ one finds that this Lie algebra is isomorphic to
\[
A\simeq
\left\{
\begin{array}{l}
\so_{n+1}(\Rr), \text{ if $\kappa>0$,}\\
\\
\so_n(\Rr)\ltimes\Rr^n, \text{ if $\kappa=0$},
\\
\\
\so_{(n,1)}(\Rr), \text{ if $\kappa<0$.}\\
\end{array}
\right.
\]

Consider, e.g., the case $\kappa>0$. The canonical $\SO_n(\Rr)$-integration is 
\[ \Sigma_{\SO_n}(A)=\SO_{n+1}(\Rr), \] 
since $\s^{-1}(x)/\SO_n(\Rr)\simeq \Ss^n$ is 1-connected. As for other integrations observe that the center of $\SO_{n+1}(\Rr)$ is:
\[
Z(\SO_{n+1}(\Rr))=
\left\{
\begin{array}{l}
\quad \{ I \}, \text{ if $n$ is even,}\\
\\
\{I,-I\}, \text{ if $n$ is odd.}
\end{array}
\right.
\]
So if $n$ is even the only connected $\SO_n(\Rr)$-integration is the canonical one, while if $n$ is odd there is another one, namely
\[ \G=\SO_{n+1}(\Rr)/\{I,-I\}=\mathrm{PSO}(n+1,\Rr). \]
The corresponding manifold of constant positive curvature is of course $\mathbb{RP}^n$. This manifold is oriented since $n$ is even, and this also explains why we don't find it for odd $n$. If instead we consider $A$ as a $\O_n(\Rr)$-algebroid, then we would recover $\mathbb{RP}^n$ for any positive integer $n$.

Observe that when $\kappa=0$ we obtain as canonical $\SO_n(\Rr)$-integration the special Euclidean group
\[ \Sigma_{\SO_n}(A)=\SO_{n}(\Rr)\ltimes \Rr^n. \]
This gives rise of course, to Euclidean space as an oriented manifold of zero curvature. Since the center of this group is trivial so this is the only $\SO_n(\Rr)$-integration. In particular, we cannot obtain the torus with the flat metric by looking only at $\SO_n(\Rr)$-integrations.
\end{example}

\section{$G$-realizations and $G$-structures with connection}
\label{sec:G:realizations}

The link between $G$-structure algebroids and the classifying Lie algebroid of a single $G$-structure is provided by the notion of a $G$-realization. This notion also allows one to understand solutions of Cartan's Realization Problem, which we formulate in this section.

\subsection{$G$-realizations}

We saw that for a fully regular $G$-structure with connection $(P,\theta,\omega)$ we have a Lie algebroid map into the classifying Lie algebroid $TP\to A_{(\theta,\omega)}$. This is abstracted for any $G$-structure algebroid as follows:

\begin{definition}
A {\bf $G$-realization} of a $G$-structure algebroid $A\to X$ (with connection) is a $G$-structure $P\to M$ (with connection) together with a morphism of $G$-structure algebroids (with connection):
\[
\vcenter{\vbox{ 
 \xymatrix@R=20pt{
 TP\ar[d]\ar[r]^H& A\ar[d] \\
 P\ar[r]_h & X
 }}}
 \]
One calls $h:P\to X$ the {\bf classifying map} of the $G$-realization. 
\end{definition}

The $G$-realizations of $A$ form a category. A {\bf morphism of $G$-realizations} of a $G$-structure algebroid $A\to X$ is a map $\Phi:P_1\to P_2$, commuting with the classifying maps $h_i:P_i\to X$ and yielding a commutative diagram of morphisms of $G$-structure algebroids (with connection):
\[ 
\xymatrix{
TP_1\ar[dr]_{H_1}\ar[rr]^{{\d\Phi}} & &TP_2\ar[dl]^{H_2} \\
 & A
}
\]

In order to see how one can construct realizations, let us recall that if $\G\tto X$ is a Lie groupoid with Lie algebroid $A$ one defines its {\bf Maurer-Cartan form} to be the right-invariant, $A$-valued, 1-form $\wmc$ given by:
\[ \wmc(\xi)=\d R_{\gamma^{-1}} \xi, \quad \text{if }\xi\in T^{\s}_\gamma\G. \]
Equivalently, we can view $\wmc$ as a bundle map:
\[
\xymatrix@C=30pt{
T^{\s} \G\ar[d]\ar[r]^{\wmc} & A\ar[d] \\
\G \ar[r]_{\t} & X
}
\]
The $1$-form $\wmc$ satisfies the Maurer-Cartan equation which can be equivalently stated as saying that this bundle map is a Lie algebroid morphism (see, e.g., \cite{FernandesStruchiner1}).

\begin{example}
\label{ex:G:realization:groupoid}
Let $A\to X$  be a $G$-structure algebroid with connection $\omega$ and assume that $\G\tto X$ is a $G$-structure groupoid with connection $\Omega$ integrating it. Then each source fiber $\s^{-1}(x)\to \s^{-1}(x)/G$ is a $G$-structure with connection $\Omega|_{\s^{-1}(x)}$. The restriction of the Maurer-Cartan form gives a morphism of $G$-structure algebroids with connection:
\[
 \xymatrix@R=20pt{
 T(\s^{-1}(x))\ar[d]\ar[r]^---{\wmc} & A\ar[d] \\
 \s^{-1}(x)\ar[r]_{t} & X
 }
 \]
Hence, $(\s^{-1}(x),\wmc|_{\s^{-1}(x)},\t)$ is a $G$-realization of $A$.
\end{example}

We will see that, in general, a fully regular $G$-structure with connection fails to be a source fiber of a $G$-integration of the classifying algebroid. This may fail even if the  classifying algebroid is $G$-integrable. Eventually, we will give a characterization of those fully regular $G$-structures that arise as $\s$-fibers. 

Note that in the case where the $G$-realization consists of a source fiber $\s^{-1}(x)$ of a Lie groupoid integrating $A$, its classifying map (the target map) is a submersion onto the leaf containing $x$. For a fully regular $G$-structure the classifying map $h:P\to X_{(\theta,\omega)}$ is also \emph{surjective} submersion. In general, the classifying map $h: P \to X$ of a $G$-realization $H:TP\to A$ is a submersion onto an \emph{open} $G$-saturated subset of a leaf of $A$ -- see \cite[Lemma 3.29]{FS19}

\subsection{Cartan's realization problem}

Many classification problems in geometry can be formulated as follows: 
\begin{problem}[{\bf Cartan's Realization Problem}]
\label{prob:Cartan}
One is given \textbf{Cartan Data}:
\begin{itemize}
\item a closed Lie subgroup $G \subset \GL(n,\Rr)$;
\item a $G$-manifold $X$ with infinitesimal action $\psi:X\times\gg\to TX$;
\item equivariant maps $T: X \to \hom(\wedge^2\Rr^n, \Rr^n)$, $R: X \to \hom(\wedge^2\Rr^n, \gg)$ and $F: X \times \Rr^n \to TX$;
\end{itemize}
and asks for the existence of \textbf{solutions}: 
\begin{itemize}
\item an n-dimensional effective orbifold $M$;
\item a $G$-structure $\B_G(M)\to M$ with tautological form $\theta\in\Omega^1(\B_G(M), \Rr^n)$ and connection 1-form $\omega \in \Omega^1(\B_G(M), \gg)$;
\item an equivariant map $h: \B_G(M) \to X$;
\end{itemize} 
satisfying the structure equations:
\begin{equation*}
\left\{
\begin{array}{l}
\d \theta=T(h)( \theta \wedge \theta) - \omega \wedge \theta\\
\d \omega= R (h)( \theta \wedge \theta) 
- \omega\wedge\omega
\\
\d h = F(h,\theta) + \psi(h, \omega)
\end{array}
\right.
\end{equation*}
\end{problem}

One can show that such a problem has a solution for each $x\in X$ if and only if the Cartan data defines a $G$-structure algebroid with connection, in canonical form, on the trivial bundle $X\times(\Rr^n\oplus\gg)\to X$, as in Section \ref{sec:canonical:form:2}. Moreover, a solution is just a $G$-realization of this algebroid. 

\begin{theorem}[\cite{FS19}]
Given Cartan data $(G,X,T,R,F)$ there is a solution to Cartan's Realization Problem for each $x\in X$ if and only if the data defines a $G$-structure algebroid with connection on $A=X\times(\Rr^n\oplus\gg)$. Moreover, there is a 1:1 correspondence:
\begin{equation*}
\left\{\txt{solutions\\ \\ $(\B_G(M),\theta,\omega,h)$\ \\ \,}\right\}
\tilde{\longleftrightarrow}
\left\{\txt{\quad $G$-realizations \qquad \\ \\ \hskip -30 pt $$\xymatrix{TP\ar[r]^---{(\theta,\omega)} & A}$$\qquad \\ \,} \right\}.
\end{equation*}
\end{theorem}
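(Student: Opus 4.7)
My plan is to reduce the statement to the correspondence between the structure equations of Cartan's problem and the Lie algebroid morphism condition for the canonical form of $A$, and then to invoke the local $G$-integration machinery of \cite{FS19} to produce solutions. First, given candidate solution data $(\B_G(M),\theta,\omega,h)$, I would assemble it into a bundle map
\[ H = (h,\theta,\omega)\colon T\B_G(M) \longrightarrow A = X\times(\Rr^n\oplus\gg). \]
Since $(\theta,\omega)$ is a coframe, $H$ is a fiberwise isomorphism, and the equivariance of $\theta$, $\omega$, $h$ gives compatibility of $H$ with the $G$-action and with the action morphism $i$. The key calculation is that $H^*\circ \d_A=\d\circ H^*$, applied to the canonical generators $u^i,\al^j\in\Omega^1(A)$ and to pulled-back functions $f\in C^\infty(X)$, reproduces \emph{exactly} the three structure equations of Cartan's problem; this is a direct consequence of the canonical-form expressions \eqref{eq:canonical:Lie:bracket} and \eqref{eq:canonical:anchor} from Section \ref{sec:canonical:form:2} (the first two equations come from evaluating on $u^i,\al^j$, the third from evaluating on $f$). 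Hence a solution of Cartan's problem is the same datum as a $G$-realization of $A$, which settles the ``moreover'' claim as soon as the existence statement is proved.

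For the necessity direction, if a solution exists through every $x\in X$, then each solution yields a fiberwise isomorphism of vector bundles $H\colon T\B_G(M)\to h^*A$ which, by the above translation of the structure equations, intertwines the formal canonical-form bracket on $A$ with the genuine Lie bracket on $T\B_G(M)$. Because $h$ is a submersion onto an open neighborhood of $x$ (as noted after Example \ref{ex:G:realization:groupoid}), the Jacobi identity and anchor-morphism property transfer pointwise from $T\B_G(M)$ to $A$ at $x$. Ranging over all $x\in X$, the canonical-form formulas must define an actual Lie algebroid bracket, and $\theta$, $\omega$ are built in by construction, so the data assembles into a $G$-structure algebroid with connection.

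For the sufficiency direction, assume the Cartan data defines a $G$-structure algebroid with connection $A$. Fix $x\in X$. By the version of Lie III for $G$-principal algebroids from \cite{FS19}, the $G$-monodromy groups are uniformly trivial on a neighborhood of $x$, so $A$ admits a local $G$-integration $\G\tto U$ with $x\in U\subset X$. Example \ref{ex:G:realization:groupoid} then shows that
\[ \bigl(\s^{-1}(x),\ \wmc|_{\s^{-1}(x)},\ \t|_{\s^{-1}(x)}\bigr) \]
is a $G$-realization of $A$ through $x$; dividing by the principal $G$-action produces an effective orbifold $M=\s^{-1}(x)/G$ and hence a solution of Cartan's problem through $x$. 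The 1:1 correspondence is then completed by checking that the two constructions -- ``realization $\mapsto$ solution'' (read off $(\theta,\omega,h)$ from the restriction of Maurer-Cartan and target) and ``solution $\mapsto$ realization'' (assemble $H$ as in the first paragraph) -- are mutually inverse, which is essentially tautological once $H$ has been shown to be the universal object encoding the structure equations.

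The main obstacle I anticipate is in the sufficiency step: one must produce a \emph{bona fide} local $G$-integration of $A$ whose source fibers carry genuine locally free, effective, proper $G$-actions (so that the quotient $\s^{-1}(x)/G$ is honestly an effective orbifold), rather than settling for a Lie groupoid of the correct formal shape. This is where the full strength of the $G$-integration theory of \cite{FS19} is needed -- in particular the local uniformity of the $G$-monodromy groups -- and it is also what distinguishes the $G$-structure setting from the purely algebroid-theoretic one.
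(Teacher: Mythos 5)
First, a caveat on the comparison: the paper does not prove this theorem itself --- it is quoted from \cite{FS19}, and the text around it only sketches the translation between the two sides. Your overall architecture does match the intended argument: identifying a solution with the fiberwise isomorphism $H=(h,\theta,\omega)\colon T\B_G(M)\to A$, and observing that the condition $H^*\circ\d_A=\d\circ H^*$ evaluated on the canonical generators and on pulled-back functions is precisely the three structure equations, is exactly the mechanism used (it is the computation of Proposition \ref{prop:classify:algbrd} run in reverse). Likewise your necessity direction --- transferring the Jacobi identity through the fiberwise isomorphism, equivalently reading the Bianchi-type identities on $(T,R,F)$ off of $\d^2=0$ applied to the structure equations --- is Bryant's argument, which is what \cite{FS19} uses.

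The genuine gap is in the sufficiency direction. Lie III for $G$-principal algebroids does not say that the $G$-monodromy groups are ``uniformly trivial on a neighborhood of $x$''; it is an if-and-only-if criterion: $A$ is $G$-integrable precisely when the $G$-monodromy groups are \emph{uniformly discrete}, and this is a real obstruction that can fail --- the paper itself stresses that a $G$-structure algebroid may be integrable without being $G$-integrable, and uniform discreteness can fail on every saturated neighborhood of a given point of $X$. So you cannot obtain a local $G$-integration over an open $U\subset X$ containing $x$ by invoking Lie III. What the proof actually needs, and what is true, is a statement confined to the leaf: for each $x$ there is a $G$-saturated open neighborhood $U$ of the orbit $G\cdot x$ \emph{inside the leaf $L$ through $x$} such that $A|_U$ is $G$-integrable. (Restricting to the leaf is harmless, since the classifying map of any $G$-realization lands in an open subset of a single leaf anyway.) That statement is not a corollary of Lie III; it is proved by first showing that the restriction $A|_{G\cdot x}$ to a single $G$-orbit is always $G$-integrable --- its extended $G$-monodromy is the image of $\pi_1(G_x)$, which one shows sits as a closed subgroup of the relevant isotropy group --- and then using the slice theorem to pick a $G$-invariant neighborhood of the orbit in $L$ that deformation retracts onto it, so that the monodromy computation is unchanged. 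With that replacement, taking a source fiber of the resulting $G$-integration of $A|_U$ produces the realization through $x$ exactly as you describe, and the rest of your argument (including the check that the $G$-action on the source fiber is principal, so that $\s^{-1}(x)/G$ is an effective orbifold) goes through.
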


In this paper, we are only interested in the realization problem for the classifying algebroid of a fully regular $G$-structure with connection. We refer to \cite{FS19} for a much wider discussion of Cartan's Realization Problem and its solutions using the theory $G$-principal groupoids.

\subsection{$G$-realizations of a classifying Lie algebroid}

Let $(P,\theta,\omega)$ be a fully regular $G$-structure with connection. We can viewed it as a $G$-realization of its classifying Lie algebroid:
\[
\xymatrix@C=15pt{TP\ar[d] \ar[rr]^---{H=(\theta,\omega)} & & A_{(\theta,\omega)} \ar[d]\\
P \ar[rr]_h && X_{(\theta,\omega)}}
\]
Any other $G$-realization is locally equivalent to this one:

\begin{prop}
Let $(P,\theta,\omega)$ be a fully regular $G$-structure with connection. Given any $G$-realization 
\[
\xymatrix@C=15pt{TP'\ar[d] \ar[rr]^---{H'} & & A_{(\theta,\omega)} \ar[d]\\
P' \ar[rr]_{h'} && X_{(\theta,\omega)}}
\]
then for any $p'_0\in P'$ and $p_0\in P$ with $h'(p_0')=h(p_0)$ there is a local equivalence $\Phi:P'\to P$ such that $\Phi(p_0')=p_0$.
\end{prop}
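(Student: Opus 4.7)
The plan is to construct $\Phi$ by applying the Frobenius theorem to a distribution on the fiber product of $P'$ and $P$ over the classifying manifold. First I would observe that both $H=(\theta,\omega)$ and $H'$ are fiberwise isomorphisms of vector bundles: $(\theta,\omega)$ is a coframe on $P$, while morphisms of $G$-structure algebroids are by construction fiberwise isomorphisms. Also, both $h$ and $h'$ are submersions --- for $h$ this is part of the structure of the classifying manifold, while for $h'$ it follows from $\rho\circ H'=\d h'$ together with the transitivity of the classifying algebroid. Consequently, the fiber product
\[ N:=P'\times_{X_{(\theta,\omega)}}P \]
is a smooth submanifold of $P'\times P$ containing $(p_0',p_0)$, of dimension $\dim P+\dim P'-\dim X_{(\theta,\omega)}$.

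On $N$ I would introduce the distribution
\[ D_{(p',p)}:=\{(v',v)\in T_{p'}P'\oplus T_p P : H'(v')=H(v)\}, \]
which has rank $\dim P=\dim P'$ since $H$ and $H'$ are fiberwise isomorphisms, and which is automatically tangent to $N$ because Lie algebroid morphisms preserve anchors. The key point is the involutivity of $D$. To see it, pick a local trivializing frame $\{\eta^i\}$ of $A_{(\theta,\omega)}^*$ near $h(p_0)$ and consider the 1-forms $\mu^i:=\pi_2^*H^*\eta^i-\pi_1^*H'^*\eta^i$ on $N$; then $D=\bigcap_i\ker\mu^i$. Because $H$ and $H'$ are Lie algebroid morphisms, $H^*$ and $H'^*$ intertwine $\d_A$ with the de Rham differential, and writing $\d_A\eta^i=\sum_{j<k}c^i_{jk}\,\eta^j\wedge\eta^k$ in the chosen frame one finds $\d\mu^i\equiv 0\pmod{\mu^1,\dots,\mu^r}$, which is the Frobenius condition.

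By Frobenius, there is an integral leaf $L$ of $D$ through $(p_0',p_0)$ of dimension $\dim P$, and the projections $\pi_1\colon L\to P'$ and $\pi_2\colon L\to P$ have injective differentials (again because $H$ and $H'$ are fiberwise isomorphisms), so by dimension count they are local diffeomorphisms near $(p_0',p_0)$. Setting $\Phi:=\pi_2\circ(\pi_1|_L)^{-1}$ yields a local diffeomorphism with $\Phi(p_0')=p_0$ and, tautologically, $H\circ\d\Phi=H'$, i.e.\ $\Phi^*\theta=\theta'$ and $\Phi^*\omega=\omega'$. To conclude that $\Phi$ is a genuine local equivalence of $G$-structures, I would use that $H$ and $H'$ intertwine the action morphism $i\colon X_{(\theta,\omega)}\rtimes\gg\to A_{(\theta,\omega)}$, so that the infinitesimal generator $\widetilde{\al}$ of each $\al\in\gg$ is characterised on both $P$ and $P'$ by $H(\widetilde{\al})=i(\cdot,\al)$; hence $\d\Phi$ sends generator to generator and, after possibly shrinking to a $G$-saturated neighborhood, $\Phi$ intertwines the $G$-actions. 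The main obstacle is the involutivity of $D$, which as indicated is essentially a transcription of the Lie algebroid morphism condition into Frobenius form.
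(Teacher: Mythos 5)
Your argument is correct in substance, but it takes a genuinely different route from the paper: the paper disposes of this proposition in one line by citing an external uniqueness theorem for realizations ([FS19, Thm 7.2]) applied to $A_{(\theta,\omega)}$, whereas you give a self-contained proof by the graph-of-a-distribution method. This is in fact the same technique the paper deploys elsewhere (compare the distribution $\cD=\ker\bigl(\pr^*_P(\theta,\omega)-\pr^*_\G\wmc\bigr)$ on $P\times_X\G$ in the proof of the covering theorem in Section 6, and the coframe equivalence results of the first paper that it rests on), so your Frobenius computation --- involutivity of $D$ from the fact that $H^*$ and $H'^*$ intertwine $\d_A$ with $\d$, plus the agreement of the structure functions $c^i_{jk}\circ h\circ\pi_2=c^i_{jk}\circ h'\circ\pi_1$ on the fiber product --- is a legitimate unpacking of the black box, and arguably more informative than the citation.

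One step you should tighten is the $G$-equivariance at the end. Intertwining the action morphisms only tells you that $\d\Phi$ matches infinitesimal generators, which yields equivariance under the identity component $G^0$; since the paper allows disconnected $G$ and locally free actions (orbifold bases), this is not yet a local equivalence in the paper's sense (a morphism defined on $G$-saturated opens). The cleaner route is to use the full $G$-equivariance of $H$ and $H'$ (part of the definition of a morphism of $G$-structure algebroids): it shows that the diagonal $G$-action on $P'\times P$ preserves $N$ and the distribution $D$, hence permutes its leaves, sending the leaf through $(p_0',p_0)$ to the leaf through $(p_0'g,p_0g)$. One then extends $\Phi$ to the $G$-saturation of its domain by $\Phi(p'g):=\Phi(p')g$ and checks consistency. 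This is exactly the care the paper itself takes at the end of its proof of the covering theorem (``If $G$ is not connected, then one needs to take the $G$-saturations\dots''), and it is where the locally free (orbifold) case requires a word about matching isotropy, which the citation to [FS19] silently absorbs.
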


\begin{proof}
The result follows from \cite[Thm 7.2]{FS19} applied to the classifying algebroid $A_{(\theta,\omega)}$.
\end{proof}

According to Example \ref{ex:G:realization:groupoid}, any $G$-integration of $A_{(\theta,\omega)}$ gives rise to $G$-realizations by considering the source fibers of the integration. The previous result shows that these $G$-realizations are locally equivalent to $(P,\theta,\omega)$ and we will see next paragraph that they exhibit some remarkable properties that distinguish them from arbitrary fully regular $G$-structures.  

Starting with Cartan data $(G,X,T,R,F)$ defining a $G$-structure algebroid with connection $A\to X$, as explained above, the solutions of the corresponding Cartan's realization problem need not be fully regular $G$-structures with connection. For example, any $G$-structure with connection $(P,\theta,\omega)$ can be viewed as a realization of $A=TP\to P$, which is a  $G$-structure algebroid with connection (cf.~Example \ref{ex:principal:bundle:algebroid}). However, when a solution is fully regular we have the following relationship between the Lie algebroid $A$ and the classifying Lie algebroids of the solution:

\begin{prop}
Let $(G,X,T,R,F)$ be Cartan data defining a $G$-structure algebroid with connection $A\to X$. If a solution $(\B_G(M),\theta,\omega,h)$ to the realization problem is fully regular then there is a morphism $(\Phi,\phi)$ of $G$-structure algebroids with connection:
\[
 \xymatrix{
 & T\B_G(M) \ar[dl] \ar[dr]\ar@{-}[d] & \\
A|_U\ar[dd] \ar[rr]^{\Phi\qquad} &\ar[d] & A_{(\theta,\omega)}\ar[dd] \\
 & \B_G(M)  \ar[dl] \ar[dr] & \\
U\ar[rr]^{\phi\qquad} & & X_{(\theta,\omega)}}
\]
where $U=\Im h$ is an open in a leaf of $A$, $\phi:U\to X_{(\theta,\omega)}$ is a surjective submersion, and $\Phi$ is a fiberwise isomorphism.
\end{prop}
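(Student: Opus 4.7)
The plan is to exhibit two $G$-realizations of different Lie algebroids sitting on the same underlying $G$-structure and splice them together using the local-equivalence rigidity of $G$-realizations. Let $H_1=(\theta,\omega):T\B_G(M)\to A$ denote the given $G$-realization with classifying map $h$, and let $H_2=(\theta,\omega):T\B_G(M)\to A_{(\theta,\omega)}$ denote the tautological $G$-realization of the classifying algebroid supplied by Corollary \ref{cor:classify:algbrd}, with classifying map $h_2$. Both are fiberwise linear isomorphisms, since each is just the coframe $(\theta,\omega)$ expressed in a different trivialization of $\B_G(M)\times(\Rr^n\oplus\gg)$. Note that $U=\Im h$ being open in a leaf of $A$ is already known from \cite[Lem 3.29]{FS19}.

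First I would define $\phi:U\to X_{(\theta,\omega)}$ by $\phi(x):=h_2(p)$ for any $p\in h^{-1}(x)$, and check independence of $p$. If $h(p_1)=h(p_2)$, then applying \cite[Thm 7.2]{FS19} to the $G$-realization $H_1$ of $A$ produces a local self-diffeomorphism $\Psi$ of $\B_G(M)$ with $\Psi(p_1)=p_2$, $\Psi^*\theta=\theta$ and $\Psi^*\omega=\omega$; in particular $\Psi$ is a local self-equivalence of $(\B_G(M),\theta,\omega)$, so $p_1\sim p_2$ and $h_2(p_1)=h_2(p_2)$. Surjectivity of $\phi$ and the submersion property then follow at once from the factorization $h_2=\phi\circ h$ together with the fact that $h$ and $h_2$ are surjective submersions.

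Next I would define $\Phi:A|_U\to A_{(\theta,\omega)}$ fiberwise by $\Phi(a):=H_2(v)$, where $v\in T_p\B_G(M)$ is the unique vector with $H_1(v)=a$ for some choice of $p\in h^{-1}(x)$. The same $\Psi$ as above satisfies $H_i\circ\d\Psi=H_i$ for $i=1,2$; combined with the fiberwise injectivity of $H_1$, this identifies the two preimages of $a$ via $\d\Psi$ and hence forces $H_2$ to take the same value on them, so $\Phi$ is well defined. By construction $\Phi$ is a fiberwise linear isomorphism covering $\phi$, and satisfies $H_2=\Phi\circ H_1$.

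Finally, all the morphism properties of $\Phi$ follow formally from the factorization $H_2=\Phi\circ H_1$ combined with the fact that $H_1$ and $H_2$ are themselves morphisms of $G$-structure algebroids with connection. The pullback $H_1^*:\Omega^\bullet(A|_U)\to\Omega^\bullet(\B_G(M))$ is injective, since $H_1$ is a fiberwise isomorphism and $h$ is a surjective submersion; the identity $H_1^*\Phi^*=H_2^*$ combined with $H_i^*\theta=\theta$, $H_i^*\omega=\omega$ and $H_i^*\d_{A_i}=\d H_i^*$ then reads off that $\Phi$ preserves the tautological and connection forms and intertwines the Lie algebroid differentials. $G$-equivariance and compatibility with the action morphisms inherit from those of $H_1,H_2$, using that each $H_i$ sends a vertical vector $\tilde\alpha_p$ to the image of $\alpha$ under the corresponding action morphism at $h_i(p)$. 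The main obstacle is the well-definedness in the first two steps, which is entirely absorbed into the local-equivalence statement \cite[Thm 7.2]{FS19}; once that is in hand, the remaining verifications are purely diagrammatic.
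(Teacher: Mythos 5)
Your proof is correct, but it takes a different route from the paper's. The paper disposes of the proposition in two lines: it cites \cite[Cor 5.11]{FernandesStruchiner1} --- the coframe ($\{e\}$-structure) version of exactly this statement, applied to the coframe $(\theta,\omega)$ --- to obtain the commuting Lie algebroid morphism $(\Phi,\phi)$, and then observes that the upgrade to a morphism of $G$-structure algebroids \emph{with connection} is automatic because both legs $T\B_G(M)\to A$ and $T\B_G(M)\to A_{(\theta,\omega)}$ are fiberwise isomorphisms of $G$-structure algebroids with connection. You instead rebuild the morphism from scratch, descending $h_2$ and $H_2$ through $h$ and $H_1$, with the only nontrivial point --- well-definedness --- absorbed into the local rigidity theorem \cite[Thm 7.2]{FS19}: two points of $\B_G(M)$ with the same $h$-value are joined by a local self-equivalence, which then also identifies their $h_2$-values and their $H_2$-images. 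Your verification of the morphism properties via the injectivity of $H_1^*$ and the identity $H_1^*\Phi^*=H_2^*$ is a clean substitute for the paper's ``both legs are fiberwise isomorphisms'' remark. What the paper's route buys is brevity and a uniform reliance on the coframe theory of the first paper; what yours buys is self-containedness within the $G$-structure framework and an explicit fiberwise description of $\Phi$ as $H_2\circ H_1^{-1}$. One point worth making explicit: the equivalence $\Psi$ furnished by \cite[Thm 7.2]{FS19} is an equivalence of $G$-realizations of $A$, so it satisfies $h\circ\Psi=h$ in addition to $\Psi^*\theta=\theta$ and $\Psi^*\omega=\omega$; you use this silently when asserting $H_1\circ\d\Psi=H_1$ as a map into the bundle $A$ over $U$. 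Likewise, smoothness of $\Phi$ (not just of $\phi$) deserves a sentence --- it follows by the same local-section argument applied to the surjective submersion $H_1:T\B_G(M)\to A|_U$.
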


\begin{proof}
It follows from \cite[Cor 5.11]{FernandesStruchiner1} that there is a morphism $(\Phi,\phi)$ of Lie algebroids as in the diagram, making it commute. This is a morphism of $G$-structure algebroids with connection because both maps $T\B_G(M)\to A$ and $T\B_G(M)\to A_{(\theta,\omega)}$ are fiberwise isomorphisms of $G$-structure algebroids with connection and $\Phi$ is a fiberwise isomorphism.
\end{proof}

\subsection{Complete $G$-realizations and integrability}
In general, a $G$-structure algebroid with connection fails to be $G$-integrable. 

A $G$-realization $H:TP\to A$ of a $G$-structure algebroid with connection is called a \textbf{full realization} of $A$ if $h:P\to X$ is surjective onto a leaf of $A$. One could be lead to believe that the existence of a full $G$-realization covering a leaf $L\subset X$ implies $G$-integrability of the restricted Lie algebroid $A|_L$. The next example shows that this is not the case.

\begin{example}
Let $X$ be a manifold with a closed 2-form $\omega\in\Omega^2(X)$. The extension Lie algebroid of $TX$ associated with $\omega$ has underlying vector bundle
\[ A:=TX\oplus \Rr, \]
anchor the projection $\rho:TM\oplus\Rr\to TM$, and Lie bracket defined by:
\[ [(V,f),(W,g)]_A:=([V,W],\Lie_Vg -\Lie_W f+\omega(V,W)), \]
where $V,W\in\X(X)$ and $f,g\in C^\infty(X)$.
It is well-known that this Lie algebroid is integrable if and only if the group of spherical periods of $\omega$:
\[ \Per(\omega):=\Big\{\int_\gamma \omega:\gamma\in\pi_2(X)\Big\}\subset \Rr, \]
is a discrete subgroup of $\Rr$ -- see Example 3.28 in \cite{CrainicFernandes:lectures}. 

If we assume that $X$ is parallelizable then $A$ is a trivial vector bundle, hence it is a $\{e\}$-structure algebroid. Assume further that one has a manifold $Y$, where $\pi_2(Y)=0$, together with a surjective local diffeomorphism:
\[ \phi:Y\to X, \]
Note that $Y$ is also parallelizable. Then the pullback bundle $\phi^*A=TY\oplus\Rr$ is also an extension Lie algebroid for the pullback form $\phi^*\omega$, which is now an integrable $\{e\}$-structure algebroid. Note that this Lie algebroid structure makes the pullback square
\[
\xymatrix{
\phi^*A\ar[d]\ar[r]^{\Phi} & A\ar[d]\\
Y\ar[r]_{\phi} & X}
\]
a Lie algebroid morphism. Then any integration $\G\tto Y$ of $\phi^*A$ (which is obviously an $\{e\}$-integration) gives a realization $(\s^{-1}(y),\wmc|_{\s^{-1}(y)})$ of $\phi^*A$. Composing this realization with the morphism $(\phi,\Phi)$ yields a full realization of $A$. 

For a concrete example, one can take $X=(\Rr^3-\{0\})\times(\Rr^3-\{0\})$ with closed 2-form:
\[ \omega=\pr_1^*\eta+\sqrt{2}\, \pr_2^*\eta, \]
where $\eta$ denotes a closed 2-form on $\Rr^3-\{0\}$ whose integral over $\Ss^2$ is non-zero. We leave as an exercise to check that there exists $Y$, with $\pi_2(Y)=0$, and a surjective, local diffeomorphism $\phi:Y\to X$.
\end{example}

In order to obtain $G$-integrability, one needs full $G$-realizations which are also \emph{complete}, as we now explain. 

Given a $G$-realization $H:TP\to A$ of a $G$-structure algebroid with connection one obtains a Lie algebroid action along the base map $h:P\to X$ by setting:
\[ \sigma:h^*A\to TP, \quad \sigma(p,a)):=H_p^{-1}(a). \]
Indeed, using the fact that $H$ is a Lie algebroid map, one checks that the two defining properties of a Lie algebroid action hold:
\begin{enumerate}[(i)]
\item $\d h\circ \sigma=\rho_A$;
\item $\sigma([s_1,s_2]_A)=[\sigma(s_1),\sigma(s_2)]$, for all sections $s_1,s_2\in\Gamma(A_{(\theta,\omega)})$.
\end{enumerate}

We recall that an action $\sigma$ of a Lie algebroid $A\to X$ on a map $h:P\to X$ is called a \textbf{complete action} if for any compactly supported section $s\in\Gamma(A)$ the vector field $\sigma(s)\in\X(P)$ is complete. This leads to the following:

\begin{definition}
A $G$-realization $H:TP\to A$ of a $G$-structure algebroid with connection is called a \textbf{complete realization} of $A$ if it is a full realization whose associated  Lie algebroid action $\sigma:h^*A\to TP$ is complete.
\end{definition}

In particular, a fully regular $G$-structure with connection $(P,\theta,\omega)$ is automatically a full realization of its classifying algebroid $A_{(\theta,\omega)}$ and we will call it \textbf{complete} if is also a complete realization of $A_{(\theta,\omega)}$.

\begin{example}
Let $A_{(\theta,\omega)}$ be the classifying Lie algebroid of a fully regular $O(n)$-structure $(P,\theta,\omega)$, where $\omega$ is the Levi-Civita connection. Then this is a complete $G$-realization if and only if the corresponding metric on $M=P/O(n)$ is a complete metric -- see \cite[Thm 8.5]{FS19}. 
\end{example}

\begin{example}
\label{ex:G:realization:s:fiber}
Let $\G\tto X$ be a $G$-structure groupoid with connection integrating $A_{(\theta,\omega)}$. Then the $G$-realizations $(\s^{-1}(x),\wmc|_{\s^{-1}(x)})$ furnish examples of complete realizations. The reason is that the vector field $\sigma(s)$ coincides with the restriction of the right-invariant vector field generated by the section $s$, and these are complete whenever $\rho(s)$ is complete. 
\end{example}

We have the following converse to the previous example -- see \cite[Thm 6.9]{FS19}:

\begin{theorem}
Let $(P,\theta,\omega)$ be a fully regular $G$-structure with connection. The classifying $G$-structure algebroid $A_{(\theta,\omega)}$ is $G$-integrable if and only if it admits a complete $G$-realization.
\end{theorem}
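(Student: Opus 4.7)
The plan is to handle the two implications separately, with the forward direction being essentially immediate from the examples already set up and the backward direction being the substantive part.

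For the forward direction, assume $A_{(\theta,\omega)}$ is $G$-integrable. By Theorem \ref{thm:canonical:G:integration}, the canonical $G$-integration $\Sigma_G(A_{(\theta,\omega)})\tto X_{(\theta,\omega)}$ exists as a $G$-structure groupoid with connection (via the correspondences in Remarks \ref{rem:correspodence:G:structures} and \ref{rem:correspodence:connections}). Pick any point $x\in X_{(\theta,\omega)}$ and consider its source fiber equipped with the restriction of the Maurer--Cartan form. By Example \ref{ex:G:realization:groupoid}, this is a $G$-realization whose classifying map is the target $\t$; since $\Sigma_G(A_{(\theta,\omega)})$ is $\s$-connected, $\t$ surjects onto the leaf through $x$, so this is a full realization. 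Example \ref{ex:G:realization:s:fiber} (via the observation that the infinitesimal action vectors $\sigma(s)$ are right-invariant vector fields and therefore complete whenever $\rho(s)$ is) shows this realization is complete.

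For the backward direction, suppose $H\colon TP'\to A_{(\theta,\omega)}$ is a complete $G$-realization with classifying map $h\colon P'\to X_{(\theta,\omega)}$. The plan is to use the complete Lie algebroid action $\sigma\colon h^{*}A_{(\theta,\omega)}\to TP'$ to construct a $G$-integration of $A_{(\theta,\omega)}$. Concretely, for each compactly supported section $s\in\Gamma(A_{(\theta,\omega)})$ the vector field $\sigma(s)$ is complete, so its flow is globally defined. Using $H$-relatedness, these flows descend under $h$ to the flows of $\rho_A(s)$ on $X_{(\theta,\omega)}$, showing in particular that $\rho_A(s)$ is complete. Completeness of a Lie algebroid action is precisely the hypothesis that allows one to integrate $A$-paths: given any $A$-path $a\colon [0,1]\to A_{(\theta,\omega)}$ based at $h(p')$, the $\sigma$-lift starting at $p'$ exists on all of $[0,1]$.

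With this lifting procedure in hand, the strategy is to build the desired integration as a quotient of a suitable space of $A$-paths. The standard approach (cf.~Crainic--Fernandes integration theory) is to define $\G:=\Sigma(A_{(\theta,\omega)})$ via $A$-homotopy classes of $A$-paths and to show, using the lifting to $P'$, that the $G$-monodromy groups $\cN^G_x$ are uniformly discrete (any nontrivial element would produce a nontrivial self-equivalence of $P'$ fixing a point, contradicting the local injectivity of the action morphism $\iota$). By Lie III for $G$-principal algebroids, this discreteness is exactly $G$-integrability. The main obstacle is verifying that completeness forces the discreteness of the $G$-monodromy, rather than merely the ordinary monodromy --- this requires carefully tracking how the $G$-action on $P'$ interacts with lifts of $A$-paths, which is the technical heart of \cite[Thm 6.9]{FS19} that we invoke.
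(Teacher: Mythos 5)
Your proposal follows the same route as the paper, which in fact offers no self-contained proof of this statement: the forward implication is exactly the content of Example \ref{ex:G:realization:s:fiber} (source fibers of a $G$-integration, with the restricted Maurer--Cartan form, are full and complete because the vector fields $\sigma(s)$ are restrictions of right-invariant vector fields), and for the converse the paper simply points to \cite[Thm 6.9]{FS19} --- precisely the citation you fall back on for ``the technical heart.'' So there is no gap relative to the paper. One caution, though: the parenthetical mechanism you sketch for the converse --- that a non-discrete $G$-monodromy group would ``produce a nontrivial self-equivalence of $P'$ fixing a point, contradicting local injectivity of $\iota$'' --- is not how the cited argument works and would not go through as stated. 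The actual argument first pulls $P'$ back to the universal cover of $M=P'/G$, uses completeness to invoke the Crainic--Fernandes integration of complete actions so that $\Sigma(A_L)\ltimes \widetilde{P}'\simeq \Pi_1(\widetilde{P}')$, and then reads off from the homotopy exact sequence of $G\to \widetilde{P}'\to \widetilde{M}$ that $\widetilde{\cN}^G_x$ lands isomorphically onto $\pi_1(\widetilde{P}')\cong\pi_1(G)$, which is discrete in $\Pi_1(\widetilde{P}')$; the point is that the $G$-monodromy elements act \emph{trivially} on $\widetilde{P}'$, and discreteness comes from the discreteness of the isotropy of the fundamental groupoid, not from effectiveness of the action morphism. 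If you intend to expand your sketch into a proof rather than cite \cite{FS19}, that is the step to replace.
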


Notice that for $A_{(\theta,\omega)}$ to be $G$-integrable,  the fully regular $G$-structure with connection $(P,\theta,\omega)$ does not need to be complete. For example, a non-complete metric of constant scalar curvature gives rise to a non-complete fully regular $\O(n)$-structure with connection whose classifying Lie algebroid is $G$-integrable.

The $G$-realizations arising as $\s$-fibers from $G$-integrations, as in Example \ref{ex:G:realization:s:fiber}, enjoy even more special properties. For example, every germ of infinitesimal symmetry $\xi\in\X(\s^{-1}(x),\wmc)_p$ is the restriction of an infinitesimal symmetry in $\X(\s^{-1}(x),\wmc)$, and every infinitesimal symmetry is a complete vector field. This motivates the following definition:

\begin{definition}\label{def:SC}
A $G$-realization $(P, (\theta, \omega), h)$ of a $G$-structure algebroid $A$ with connection is called \textbf{strongly complete} if $h:P\to X$ is surjective onto a leaf of $A$ and any local symmetry of $(P, (\theta, \omega), h)$ extends to a global symmetry.
\end{definition}

The property of being strongly complete characterizes the $G$-realizations arising from source fibers of $G$-integrations. In particular, this shows that they are complete realizations. Here we are interested in the special case of the classifying Lie algebroid of a fully regular $G$-structure with connection:

\begin{theorem} 
\label{thm:sym:coframe}
Let $(P,\theta,\omega)$ be a fully regular $G$-structure with connection. If $(P,\theta,\omega)$ is strongly complete then the group of symmetries $\Diff(P,\theta,\omega)$ is a Lie group with Lie algebra $\X(P,\theta,\omega)$ isomorphic to the isotropy Lie algebras of $A_{(\theta,\omega)}$. Moreover, the natural action of $\Diff(P,\theta,\omega)$ on $P$ commutes with the $G$-action, and is a proper and free action whose orbits coincide with the fibers of the classifying map $h$. In particular,  $h:P\to X_{(\theta,\omega)}$ is a principal $\Diff(P,\theta,\omega)$-bundle.
\end{theorem}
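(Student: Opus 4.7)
The plan is to leverage the characterization (quoted just before the theorem statement) that a strongly complete $G$-realization is necessarily the source fiber of a $G$-integration. Concretely, I would pick $p_0 \in P$, set $x_0 = h(p_0)$, and identify $(P, \theta, \omega)$ with $(\s^{-1}(x_0), \wmc|_{\s^{-1}(x_0)})$ for some $G$-integration $\G \tto X_{(\theta,\omega)}$ of the classifying algebroid, with $p_0 \leftrightarrow 1_{x_0}$. Under this identification, the classifying map $h$ becomes the restriction of the target map $\t$, which is surjective onto $X_{(\theta,\omega)}$ because $A_{(\theta,\omega)}$ is transitive. After this reduction, all four claims become standard facts about Lie groupoids.

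The central step is to identify $\Diff(P, \theta, \omega)$ with the isotropy Lie group $\G_{x_0}^{x_0}$ via right-translations $R_h(\gamma) = \gamma \cdot h$. Each such $R_h$ preserves the source fiber (since $\s(\gamma \cdot h) = \s(h) = x_0$), preserves the Maurer--Cartan form by right-invariance, and commutes with the right $G$-action $\gamma \cdot g = \iota(\t(\gamma), g) \cdot \gamma$ by associativity (using $\t(\gamma \cdot h) = \t(\gamma)$ for $h \in \G_{x_0}^{x_0}$), so $R_h \in \Diff(P, \theta, \omega)$. Injectivity of $h \mapsto R_h$ is immediate from $R_h(1_{x_0}) = h$. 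For surjectivity, given $\Phi \in \Diff(P, \theta, \omega)$, the map $\Phi$ preserves all invariants, hence preserves $h = \t$, so $\Phi(1_{x_0}) \in \G_{x_0}^{x_0}$; then $\Phi$ and $R_{\Phi(1_{x_0})}$ are both symmetries of the coframe $(\theta, \omega)$ on $P$ agreeing at $1_{x_0}$, and by the classical rigidity of $\{e\}$-structure symmetries (a coframe self-equivalence fixing a point is the identity on the connected component), they coincide. This yields the Lie group structure on $\Diff(P, \theta, \omega)$, and combined with Proposition \ref{prop:inf:sym:coframe} together with strong completeness (which lifts the always-injective restriction $\X(P, \theta, \omega) \to \X(P, \theta, \omega)_{p_0}$ to an isomorphism, because a germ of infinitesimal symmetry generates local flows whose members extend to global symmetries by strong completeness, whose generator extends the germ) identifies its Lie algebra with $\X(P,\theta,\omega) \cong \ker\rho_{x_0}$.

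The remaining claims are then standard Lie groupoid theory applied to $\Diff(P, \theta, \omega) = \G_{x_0}^{x_0}$ acting by right-translations on $P = \s^{-1}(x_0)$: the right action of the isotropy group on the source fiber is free (invertibility of arrows) and proper, commutes with the $G$-action (associativity, as above), and its orbits are precisely the $\t$-fibers of $\s^{-1}(x_0)$ (the isotropy acts simply transitively on each $\t$-fiber). This exhibits $h: P \to X_{(\theta,\omega)}$ as a principal $\Diff(P, \theta, \omega)$-bundle. The main obstacle is the surjectivity argument for $\G_{x_0}^{x_0} \to \Diff(P, \theta, \omega)$, which rests on the rigidity of coframe symmetries together with implicit connectedness of the source fiber (which can be arranged by restricting to connected components or by appealing to $\s$-connectedness of the chosen $G$-integration).
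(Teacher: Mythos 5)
Your strategy inverts the logical order that the paper (and the cited \cite{FS19}) actually uses, and this creates a genuine circularity. You take as your starting point the statement that a strongly complete $G$-realization \emph{is} a source fiber of some $G$-integration, and then read off all four conclusions from standard Lie groupoid facts about the isotropy group acting on a source fiber. But in the paper that source-fiber identification is a \emph{consequence} of Theorem \ref{thm:sym:coframe}: the $G$-integration in question is constructed as the gauge groupoid $(P\times P)/H$ of the principal bundle $h:P\to X_{(\theta,\omega)}$ with $H=\Diff(P,\theta,\omega)$, and to form that gauge groupoid one must already know that $H$ is a Lie group acting freely and properly with orbits the fibers of $h$ --- which is exactly what you are trying to prove. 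There is no independent route offered to the source-fiber identification for a general strongly complete $P$ (the universal-property argument only identifies $P$ with a source fiber of the canonical $G$-integration when $P/G$ is $1$-connected), so your premise is not available at the point where you invoke it.

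The intended argument, visible in \cite[Prop 6.12]{FS19} and in the authors' companion results, goes the other way: a symmetry of $(P,\theta,\omega)$ is in particular a symmetry of the \emph{coframe} $(\theta,\omega)$ commuting with the classifying map, so one reduces to the $\{e\}$-structure case and applies the corresponding statement for strongly complete coframes from \cite{FernandesStruchiner1} (Lie group structure on the symmetry group, free and proper action, orbits equal to the fibers of $h$, Lie algebra equal to $\ker\rho$); the commutation with the $G$-action then follows because symmetries are $G$-equivariant lifts of base diffeomorphisms. Only after this does one build the $G$-integration as the gauge groupoid. I will add that the downstream parts of your argument are essentially sound \emph{if} one grants the source-fiber identification: the identification of $\Diff(P,\theta,\omega)$ with the isotropy group via rigidity of coframe symmetries on the connected fiber, the simple transitivity of the isotropy group on $\t$-fibers, and the compatibility with the $G$-action via $(\gamma\,g)\cdot h=(\gamma\cdot h)\,g$ are all correct computations. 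But as written the proof assumes its strongest conclusion in disguise, so you need either to prove the source-fiber statement independently or to follow the coframe-reduction route.
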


This result is a consequence of \cite[Prop 6.12]{FS19}. The fact that a strongly complete fully regular $G$-structure with connection $(P,\theta,\omega)$ is isomorphic to the source fiber of a $G$-integration can be seen as follows. By the previous theorem, $h:P\to X_{(\theta,\omega)}$ is a principal $H:=\Diff(P,\theta,\omega)$-bundle. Hence, we have the Atiyah algebroid:
\[ TP/H\to  X_{(\theta,\omega)}. \]
The coframe $(\theta,\omega)$ induces a Lie algebroid isomorphism of this Atiyah algebroid with $A_{(\theta,\omega)}$ and so its gauge groupoid
\[ \G:=(P\times P)/H\tto  X_{(\theta,\omega)} \]
is a $G$-structure groupoid with connection integrating $A_{(\theta,\omega)}$. Its source fibers are isomorphic to $(P,\theta,\omega)$. 

If $(P,\theta,\omega)$ is a complete fully regular $G$-structure with connection one can construct a ``larger'' fully regular $G$-structure with connection, locally isomorphic to $(P,\theta,\omega)$, and strongly complete. For that, one considers the universal covering space $\widetilde{M}$ of the effective orbifold $M=P/G$. Then one has a pullback $G$-structure:
\[
\xymatrix{
\widetilde{P}\ar[d]\ar[r]^q& P\ar[d]\\
\widetilde{M}\ar[r] & M
}
\]
This yields a complete $G$-realization $(\tilde{P},(q^*\theta,q^*\omega),q^*h)$ of $A_{(\theta,\omega)}$. Using that $\tilde{M}=\tilde{P}/G$ is now an orbifold with trivial orbifold fundamental group, one can show that this realization must be strongly complete, and is actually isomorphic to an $\s$-fiber of the canonical $G$-integration -- see \cite[Prop 6.10]{FS19}. In conclusion, we have the following corollary of Theorem \ref{thm:sym:coframe}:

\begin{corol}
Every complete fully regular $G$-structure with connection $(P,\theta,\omega)$ is covered by a $\s$-fiber of the canonical $G$-integration of its classifying Lie algebroid.
\end{corol}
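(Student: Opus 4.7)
The plan is to construct a covering $q:\widetilde{P}\to P$ so that $\widetilde{P}$ itself, with pulled-back tautological form and connection, is a \emph{strongly complete} fully regular $G$-structure with connection. Once we have this, Theorem \ref{thm:sym:coframe} together with the construction after it (the gauge groupoid of the principal $\Diff(\widetilde{P},q^*\theta,q^*\omega)$-bundle $\widetilde{h}:\widetilde{P}\to X_{(\theta,\omega)}$) identifies $\widetilde{P}$ with an $\s$-fiber of some $G$-integration of $A_{(\theta,\omega)}$, and the triviality of the orbifold fundamental group of $\widetilde{M}=\widetilde{P}/G$ forces that integration to be the canonical one, thereby yielding the corollary.

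First, I would pass to the universal covering orbifold $\pi:\widetilde{M}\to M=P/G$ and form the pullback principal $G$-bundle:
\[
\xymatrix{\widetilde{P}\ar[d] \ar[r]^q & P \ar[d]\\
\widetilde{M} \ar[r]_{\pi} & M.}
\]
The map $q$ is a $G$-equivariant covering, so $(q^*\theta,q^*\omega)$ defines a $G$-structure with connection on $\widetilde{P}$. Since local equivalences of $(P,\theta,\omega)$ lift to local equivalences of $(\widetilde{P},q^*\theta,q^*\omega)$, the classifying map of $(\widetilde{P},q^*\theta,q^*\omega)$ coincides with $h\circ q$ and in particular $(\widetilde{P},q^*\theta,q^*\omega)$ is fully regular with the same classifying manifold $X_{(\theta,\omega)}$ and classifying algebroid $A_{(\theta,\omega)}$ as $(P,\theta,\omega)$. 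Moreover $q$ is a local diffeomorphism, so the Lie algebroid action $\sigma:\widetilde{h}^*A_{(\theta,\omega)}\to T\widetilde{P}$ is the pullback of the complete action on $P$; completeness is inherited using that the $G$-action is proper and $q$ is a $G$-equivariant covering, so $(\widetilde{P},q^*\theta,q^*\omega)$ is a complete $G$-realization of $A_{(\theta,\omega)}$.

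The main step, which I expect to be the main obstacle, is to upgrade completeness to \emph{strong completeness} for $\widetilde{P}$. Here one uses precisely that $\widetilde{M}$ has trivial orbifold fundamental group: any local symmetry $\Phi:U_1\to U_2$ of $(\widetilde{P},q^*\theta,q^*\omega)$ covers a local isometry of the underlying orbifold structure, and one needs to extend it globally by a monodromy-type argument along paths in $\widetilde{M}$. The key point is that completeness of the action $\sigma$ allows one to integrate the infinitesimal symmetries along arbitrary paths, and simply connectedness of $\widetilde{M}$ (in the orbifold sense) ensures that the resulting extension is well-defined and single-valued. This is exactly the content invoked from \cite[Prop 6.10]{FS19}, so I would formally cite that result rather than redo the argument.

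Finally, with $(\widetilde{P},q^*\theta,q^*\omega)$ strongly complete, Theorem \ref{thm:sym:coframe} yields that $\widetilde{h}:\widetilde{P}\to X_{(\theta,\omega)}$ is a principal $H$-bundle for $H=\Diff(\widetilde{P},q^*\theta,q^*\omega)$, and the gauge groupoid $\G=(\widetilde{P}\times\widetilde{P})/H\tto X_{(\theta,\omega)}$ is a $G$-structure groupoid with connection integrating $A_{(\theta,\omega)}$ whose $\s$-fibers are copies of $\widetilde{P}$. The $\s$-fiber $\s^{-1}(x)\simeq \widetilde{P}$ has orbit space $\widetilde{M}$ with trivial orbifold fundamental group, so by the characterization in Theorem \ref{thm:canonical:G:integration}(a) this $\G$ is exactly $\Sigma_G(A_{(\theta,\omega)})$. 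Composing the identification $\s^{-1}(x)\cong \widetilde{P}$ with the covering map $q:\widetilde{P}\to P$ produces the desired covering of $P$ by an $\s$-fiber of the canonical $G$-integration.
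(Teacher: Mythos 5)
Your proposal is correct and follows essentially the same route as the paper: pull back along the universal cover of $M=P/G$, observe the pullback is a complete $G$-realization of the same classifying algebroid, invoke \cite[Prop 6.10]{FS19} to upgrade to strong completeness using the triviality of the orbifold fundamental group, and then identify $\widetilde{P}$ with an $\s$-fiber of the canonical $G$-integration. The only cosmetic difference is that you unfold the last identification via Theorem \ref{thm:sym:coframe}, the gauge groupoid construction, and Theorem \ref{thm:canonical:G:integration}(a), whereas the paper defers both that step and the strong completeness to the single citation of \cite[Prop 6.10]{FS19}.
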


\begin{example}
It is now easy to explain why we did not find the flat metric on the torus $\Tt^n$ in the discussion in Example \ref{ex:curvature:2}. Although this is a complete metric and the associated $\SO_n(\Rr)$-structure $(P,\theta,\omega)$ is fully regular and complete, it is not strongly complete: their are local isometries of the torus which do not extend to global isometries. On the other hand, the $\SO_n(\Rr)$-structure associated with Euclidean flat space provides a cover of $(P,\theta,\omega)$ which is strongly complete and indeed the $\s$-fiber of the canonical $\SO_n(\Rr)$-integration of classifying Lie algebroid of the flat torus.
\end{example}

More generally, one can even classify all fully regular $G$-structures with connection whose classifying  Lie  algebroid is $G$-integrable. 

\begin{theorem}
Let $(P,\theta,\omega)$ be a fully regular $G$-structure with connection and  suppose that  the  classifying  Lie  algebroid $A_{(\theta,\omega)}$ is $G$-integrable.  Then $(P,\theta,\omega)$ is covered by a full $G$-realization of $A_{(\theta,\omega)}$ which covers an open $G$-invariant subset of an $\s$-fiber of the canonical $G$-integration.
\end{theorem}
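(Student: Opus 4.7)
The plan is to lift the $G$-structure to the universal orbifold cover of its base and then develop this lifted structure into a source-fiber of the canonical $G$-integration. First, let $q\colon\widetilde{M}\to M:=P/G$ be the universal orbifold cover and set $\widetilde{P}:=q^{*}P$, equipped with the pulled-back $G$-action, tautological form $\tilde\theta$, and connection $\tilde\omega$. Then $\pi\colon\widetilde{P}\to P$ is a $G$-equivariant covering map, and $(\widetilde{P},\tilde\theta,\tilde\omega)$ is a fully regular $G$-structure with connection locally equivalent to $(P,\theta,\omega)$; in particular, it has the same classifying algebroid $A_{(\theta,\omega)}$, same classifying manifold $X_{(\theta,\omega)}$, and its classifying map $\tilde h\colon\widetilde{P}\to X_{(\theta,\omega)}$ is still a surjective submersion. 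Since the classifying algebroid of a fully regular $G$-structure is transitive, its unique leaf is $X_{(\theta,\omega)}$ itself, so $\widetilde{P}$ is automatically a full $G$-realization of $A_{(\theta,\omega)}$.

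Second, I would construct a developing map $\Psi\colon\widetilde{P}\to\Sigma$ into the source-fiber $\Sigma:=\s^{-1}(x_{0})$ of the canonical $G$-integration $\Sigma_{G}(A_{(\theta,\omega)})$, where $x_{0}:=\tilde h(\tilde p_{0})$ for a chosen base point $\tilde p_{0}\in\widetilde{P}$. This integration exists by the $G$-integrability hypothesis, and its source-fibers are strongly complete $G$-realizations with orbifold 1-connected quotients. For $\tilde p\in\widetilde{P}$, pick a path $\gamma\colon[0,1]\to\widetilde{P}$ from $\tilde p_{0}$ to $\tilde p$; the coframe $(\tilde\theta,\tilde\omega)$ converts $\gamma$ into an $A_{(\theta,\omega)}$-path, which by completeness of the source-fibers of $\Sigma_{G}(A_{(\theta,\omega)})$ lifts uniquely to a path in $\Sigma$ starting at $1_{x_0}$, and $\Psi(\tilde p)$ is defined as its endpoint. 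Path-independence follows from a standard $A$-homotopy argument combining the orbifold 1-connectedness of $\widetilde{M}$ with Lie's second theorem for $A$-paths. By the first proposition of this section applied at $\tilde p_{0}$ and $1_{x_0}$, $\Psi$ is $G$-equivariant and locally agrees with a local equivalence of $G$-realizations; in particular $\Psi^{*}\wmc=(\tilde\theta,\tilde\omega)$, so $\Psi$ is a $G$-equivariant local diffeomorphism.

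Third, I would set $W:=\Psi(\widetilde{P})\subset\Sigma$ and verify that $W$ is open and $G$-invariant and that $\Psi\colon\widetilde{P}\to W$ is a covering map. Openness follows from $\Psi$ being a local diffeomorphism, and $G$-invariance from its $G$-equivariance. For the covering property, $\Psi$ descends to a local diffeomorphism $\bar\Psi\colon\widetilde{M}\to W/G$ of orbifolds; since it is built by piecing together local equivalences between the $G$-realizations $\widetilde{P}$ and $\Sigma$, it is the canonical developing map of $\widetilde{M}$ into the model orbifold $\Sigma/G$, and the usual monodromy theorem for developing maps (using that $\widetilde{M}$ is orbifold 1-connected and $\Sigma$ is strongly complete) identifies $\bar\Psi$ with a covering onto $W/G$. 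By $G$-equivariance $\Psi$ is then a covering onto $W$, which exhibits $\widetilde{P}$ as the required full $G$-realization: it covers $P$ via $\pi$ and covers the open $G$-invariant subset $W\subset\Sigma$ via $\Psi$.

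The main obstacle I expect is the third step, namely verifying that $\Psi$ is a covering onto $W$ rather than merely a local diffeomorphism with open image. Injectivity of $\Psi$ cannot be expected in general: already when $P$ is the orthonormal frame bundle of a flat annulus in $\Rr^{2}$, the universal orbifold cover of the annulus develops onto $\Rr^{2}$ as a genuinely non-trivial covering of the annulus sitting inside $\Rr^{2}$, so $\Psi$ is multi-valued over $W$. The key technical point is therefore to control the monodromy of the developing map by combining the completeness of the source-fibers of $\Sigma_{G}(A_{(\theta,\omega)})$ (which provides global extensions of $A$-paths) with the orbifold 1-connectedness of $\widetilde{M}$ (which eliminates monodromy on the lifts).
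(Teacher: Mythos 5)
Your route is genuinely different from the paper's. The paper does not pass to the universal cover at all: it forms the fiber product $P\tensor[_h]{\times}{_\t}\G$ with $\G=\Sigma_G(A_{(\theta,\omega)})$, takes the involutive distribution $\cD=\ker\big(\pr^*_P(\theta,\omega)-\pr^*_\G\wmc\big)$, and lets $Q$ be a maximal integral leaf; the two projections are then surjective local diffeomorphisms onto $P$ and onto an open subset of an $\s$-fiber, and the disconnected-$G$ case is handled by saturating $Q$. Your developing-map construction can be made to work, and it has the merit of producing a cover $\widetilde P\to P$ that is an honest covering map and of making visible exactly why the \emph{canonical} $G$-integration is the right target: since $\widetilde M$ is orbifold $1$-connected, $\pi_1(G)\to\pi_1(\widetilde P)$ is onto, its image under the induced map $\pi_1(\widetilde P)\to\Sigma(A_{(\theta,\omega)})_{x_0}$ lies in the $G$-monodromy group $\widetilde{\cN}^G_{x_0}$, and that group is precisely the kernel of $\Sigma(A_{(\theta,\omega)})\to\Sigma_G(A_{(\theta,\omega)})$ — this, not ``Lie II for $A$-paths'' in the abstract, is what kills the path-dependence. (Also, integrating an $A$-path to a $\G$-path requires nothing about completeness of the source fibers; it holds for any integration.) The price of your route is this monodromy bookkeeping, which the graph argument avoids entirely.

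Your third step, however, contains a genuine error, albeit a harmless one. The claim that $\Psi$ is a covering map onto $W=\Psi(\widetilde P)$ is false in general: developing maps of incomplete structures need not be coverings onto their images (grafted complex projective structures on surfaces have surjective, non-injective developing maps onto the $1$-connected model, which therefore cannot be coverings), and completeness of the \emph{target} $\s$-fiber does not remedy this — one would need completeness of $(P,\theta,\omega)$ itself, which is precisely not assumed here (the paper stresses that $G$-integrability of $A_{(\theta,\omega)}$ does not force $(P,\theta,\omega)$ to be complete). Fortunately the theorem does not ask for this: by the remark immediately following it, ``cover'' means only a surjective local diffeomorphism. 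So your step 3 should simply be deleted; that $\Psi$ is a local diffeomorphism with open, $G$-invariant image $W$, together with the covering $\widetilde P\to P$ from step 1, already yields the statement.
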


\begin{remark}
In the previous theorem, by a ``cover'' of $P$ we mean a manifold $Q$ together with a surjective local diffeomorphism $\phi:Q\to P$ which is not necessarily an even cover.
\end{remark}

\begin{proof}
Let $\G:=\Sigma_G(A_{(\theta,\omega)})\tto X_{(\theta,\omega)}$ be the canonical $G$-integration. As in the proof of Theorem 6.8 of \cite{FernandesStruchiner1} one considers the distribution $\cD$ in $P\tensor[_h]{\times}{_\t} \G$ given by:
\[ \cD:=\ker\Big(\pr^*_P(\theta,\omega)-\pr^*_\G\wmc\Big). \]
This is an involutive distribuition and one can take any maximal integral submanifold $Q$ of $\cD$. The proof in \cite{FernandesStruchiner1} shows that:
\begin{enumerate}[(a)]
\item The projection $\pr_P:Q\to P$ is a surjective local diffeomorphism;
\item The projection $\pr_\G:Q\to \G$ is a local diffeomorphism onto an open subset $U$ of an $\s$-fiber;
\item The restriction $(\widetilde\theta,\widetilde{\omega}):=\pr^*_P(\theta,\omega)|_Q=\pr^*_\G\wmc|_Q$ gives a coframe in $Q$ making both projections equivalences.
\end{enumerate}
We observe that $Q$ is invariant under the (diagonal) action of the connected component of the identity $G^0$, since the infinitesimal action is tangent to $\cD$. The components $\widetilde\theta$ and $\widetilde{\omega}$ of the coframe then give an $\Rr^n$-valued and a $\gg$-valued form that satisfy the properties of a tautological and a connection form with structure group $G^0$.

If $G$ is connected, the result follows. If $G$ is not connected, then one needs to take the $G$-saturations of $Q$ and $U$, which then satisfy the conditions in the statement of the theorem. Note that in the latter case both the resulting cover and the open subset of the $\s$-fiber may be disconnected, even if $P$ is connected (our standing assumption).
\end{proof}

The previous result motivates introducing the following notion: Two $G$-structures with connection $(P_1,\theta_1,\omega_1)$ and $(P_2,\theta_2,\omega_2)$ are called \textbf{globally equivalent up to cover} if there exists a $G$-structure with connection $(Q,\widetilde{\theta},\widetilde{\omega})$ and surjective local diffeomorphisms:
\[
\xymatrix{
 & (Q,\widetilde{\theta},\widetilde{\omega})\ar[dl]_{\phi_1}\ar[dr]^{\phi_2} & \\
(P_1,\theta_1,\omega_1) & & (P_2,\theta_2,\omega_2)}
\]
such that $\phi^*_i\theta_i=\widetilde{\theta}$ and $\phi^*_i\omega_i=\widetilde{\omega}$. Then, when the rank of the fully regular coframe attains its smallest value, one obtains:

\begin{corol}
Let $(P,\theta,\omega)$ be a fully regular $G$-structure with connection of rank 0. Then $P$ is locally equivalent up to cover to an open subset of the ``space form" $G$-structure of Example \ref{ex:space:form}.
\end{corol}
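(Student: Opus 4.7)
The plan is to reduce the statement to the preceding theorem on fully regular $G$-structures whose classifying algebroid is $G$-integrable. Since $(P,\theta,\omega)$ is fully regular of rank $0$, the classifying manifold $X_{(\theta,\omega)}$ reduces to a single point, and by Proposition~\ref{prop:classify:algbrd} the classifying Lie algebroid is a Lie algebra
\[ A_{(\theta,\omega)}=\Rr^n\oplus\gg,\qquad [(u,\al),(v,\be)] = (\al\cdot v - \be\cdot u - T(u,v),\, [\al,\be]_\gg - R(u,v)), \]
in which $T\in\hom(\wedge^2\Rr^n,\Rr^n)$ and $R\in\hom(\wedge^2\Rr^n,\gg)$ are now constants (subject only to the Jacobi identity), and the anchor vanishes identically.

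Next I would verify that this Lie algebra is $G$-integrable in the sense of Section~\ref{sec:G:integration}: one must produce a Lie group $H$ with $\mathrm{Lie}(H)=A_{(\theta,\omega)}$ together with an effective, locally injective Lie group morphism $\iota:G\to H$ integrating the canonical inclusion $i:\gg\hookrightarrow A_{(\theta,\omega)}$. The existence of the $G$-structure $(P,\theta,\omega)$ itself supplies the required integration: the $G$-action on $P$, together with the constant structure equations satisfied by the coframe $(\theta,\omega)$, exhibits $G$ as a Lie subgroup of some integration of $A_{(\theta,\omega)}$. In the language of Lie~III, the $G$-monodromy group at the unique point of $X_{(\theta,\omega)}$ is then uniformly discrete.

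Once $G$-integrability is in place, I would invoke the preceding theorem: $(P,\theta,\omega)$ is covered by a full $G$-realization of $A_{(\theta,\omega)}$ which in turn covers an open $G$-invariant subset of an $\s$-fiber of the canonical $G$-integration $\Sigma_G(A_{(\theta,\omega)})$. Since $X_{(\theta,\omega)}$ is a point, $\Sigma_G(A_{(\theta,\omega)})$ is just a Lie group $H$, and its unique $\s$-fiber is $H$ itself equipped with the Maurer-Cartan form $\wmc$. The splitting $A_{(\theta,\omega)}=\Rr^n\oplus\gg$ decomposes $\wmc$ into a tautological $\Rr^n$-valued $1$-form and a $\gg$-valued connection $1$-form, so that $H$ becomes a $G$-structure with connection on the effective orbifold $H/G$. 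By construction its classifying algebroid is $A_{(\theta,\omega)}$, and it is therefore precisely the (generalized) ``space form'' $G$-structure of Example~\ref{ex:space:form} (with torsion $T$ and curvature $R$), yielding the desired cover of an open subset.

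The main obstacle is the rigorous verification of $G$-integrability: the simply connected Lie group integrating $A_{(\theta,\omega)}$ as an abstract Lie algebra may only accommodate a cover of $G$, so one must exhibit a quotient of it by a discrete central subgroup that realizes $G$ as a genuine Lie subgroup with $\iota:G\to H$ effective and locally injective. Extracting this subgroup directly from the data of $(P,\theta,\omega)$ and matching it with the action morphism is the delicate step, although the existence of the $G$-structure itself should make this automatic. Once this is confirmed, the corollary follows directly from the preceding theorem.
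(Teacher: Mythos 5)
Your overall architecture is the one the paper intends: at rank $0$ the classifying manifold $X_{(\theta,\omega)}$ is a point, the classifying algebroid is the Lie algebra $\Rr^n\oplus\gg$ in canonical form with zero anchor and constant $T,R$, and the corollary is read off from the immediately preceding theorem, the unique $\s$-fiber of the canonical $G$-integration being the generalized space form of Example~\ref{ex:space:form}. Steps (i), (iii) and (iv) of your argument are fine. The genuine gap is in step (ii), your verification of $G$-integrability. You assert that ``the existence of the $G$-structure $(P,\theta,\omega)$ itself supplies the required integration'' and that this ``should make this automatic.'' This is precisely the inference the paper warns against: Section~\ref{sec:G:realizations} exhibits a $G$-structure algebroid admitting a \emph{full} $G$-realization over a leaf on which the algebroid is \emph{not} integrable (the extension-algebroid example built from a closed $2$-form with non-discrete spherical periods), and the theorem in that section states that $A_{(\theta,\omega)}$ is $G$-integrable if and only if it admits a \emph{complete} $G$-realization. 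The given $(P,\theta,\omega)$ is not assumed complete, so its mere existence cannot be used to manufacture the effective morphism $\iota:G\to H$; the ``delicate step'' you identify is not automatic for the reason you give.

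What actually saves the corollary is specific to rank $0$: the base of $A_{(\theta,\omega)}$ is a single point, hence a single $G$-orbit and a single leaf, and the restriction of a $G$-structure algebroid to a $G$-orbit is always $G$-integrable --- this is part of the local existence theory of \cite{FS19}, and amounts to checking that the $G$-monodromy group $\widetilde{\iota}(\pi_1(G))\subset Z(\Sigma(A_{(\theta,\omega)}))$ is discrete for a transitive algebroid over a point. Replacing your justification by an appeal to that result closes the gap; the remainder of your argument (passing to the canonical $G$-integration $\Sigma_G(A_{(\theta,\omega)})=H$, splitting $\wmc$ into tautological and connection components, and invoking the covering theorem) then goes through as written. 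One further small point: Example~\ref{ex:space:form} defines a generalized space form to be rank $0$ \emph{and torsionless}, whereas rank $0$ alone does not force $T=0$; your proof correctly retains the torsion term, so the model you land on is the rank-$0$ Lie algebra with bracket $(\al\cdot v-\be\cdot u-T(u,v),\,[\al,\be]_\gg-R(u,v))$ rather than the torsion-free one of the example. This is an imprecision in the corollary's statement rather than in your reasoning, but it should be flagged when you identify the $\s$-fiber with ``the'' space form.
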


On the other hand, when the rank of the fully regular coframe attains its largest value, one finds that:

\begin{corol}
Let $(P,\theta,\omega)$ be a fully regular $G$-structure with connection of rank $n+\dim\gg$. Then $X_{(\theta,\omega)}$ is itself a $G$-structure with connection covered by $(P,\theta,\omega)$.
\end{corol}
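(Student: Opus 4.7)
The plan is to use the dimension equality to force the classifying map to be a local diffeomorphism, and then transport the $G$-structure with connection from $P$ down to $X_{(\theta,\omega)}$. Concretely, since the rank of a fully regular $G$-structure with connection equals $\dim X_{(\theta,\omega)}$, our hypothesis forces
\[
\dim X_{(\theta,\omega)} = n + \dim\gg = \dim P.
\]
The classifying map $h\colon P\to X_{(\theta,\omega)}$ is therefore a surjective submersion between equidimensional manifolds, hence a surjective local diffeomorphism. Equivalently, the anchor $\rho\colon A_{(\theta,\omega)}\to TX_{(\theta,\omega)}$ is surjective (by transitivity) and between bundles of equal rank, so it is a vector bundle isomorphism.

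Using this, I would define forms $\theta_X\in\Omega^1(X_{(\theta,\omega)},\Rr^n)$ and $\omega_X\in\Omega^1(X_{(\theta,\omega)},\gg)$ by local pushforward along $h$: for $x\in X_{(\theta,\omega)}$, pick any $p\in h^{-1}(x)$ and set
\[
(\theta_X)_x := \theta_p\circ (\d_p h)^{-1},\qquad (\omega_X)_x := \omega_p\circ (\d_p h)^{-1}.
\]
The independence of the choice of $p$ is the essential verification, and it rests on the defining invariance property of the relation $\sim$ together with the invariance of $\theta$ and $\omega$: if $p,p'\in h^{-1}(x)$, there is, by definition of $\sim$, a local equivalence $\Phi$ with $\Phi(p)=p'$, and the identities $h\circ\Phi=h$ (locally), $\Phi^*\theta=\theta$, $\Phi^*\omega=\omega$ together imply that the two candidate values agree. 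By construction, $h^*\theta_X=\theta$ and $h^*\omega_X=\omega$.

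Finally, I would verify the $G$-structure axioms on $X_{(\theta,\omega)}$. The induced $G$-action on $X_{(\theta,\omega)}$ is smooth and proper by the proposition preceding the definition of the classifying manifold in Section~\ref{sec:classifying:algbrd}. Since $h$ is a $G$-equivariant local diffeomorphism and the $G$-action on $P$ is locally free and effective, the same holds for the action on $X_{(\theta,\omega)}$ (discreteness of the $h$-fibers transfers local freeness, and effectiveness passes down at least up to the identity component, which is all that matters for the orbifold structure). Pointwise surjectivity and horizontality of $\theta_X$, verticality of $\omega_X$, and $G$-equivariance of both pass directly from $(\theta,\omega)$ via the pullback identities and the fiberwise isomorphism $\d h$. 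The conclusion is that $(X_{(\theta,\omega)},\theta_X,\omega_X)$ is a $G$-structure with connection over the effective orbifold $X_{(\theta,\omega)}/G$, and $h$ exhibits $(P,\theta,\omega)$ as a cover of it. The heart of the argument — and hence the delicate step — is the well-definedness of $\theta_X$ and $\omega_X$ across fibers of $h$; everything else is a routine transfer along a $G$-equivariant local diffeomorphism.
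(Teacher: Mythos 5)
Your argument is correct and is essentially the one the paper has in mind (the corollary is stated there without proof): the maximal-rank hypothesis forces $\dim X_{(\theta,\omega)}=\dim P$, so $h$ is a surjective local diffeomorphism --- equivalently the anchor $\rho\colon A_{(\theta,\omega)}\to TX_{(\theta,\omega)}$ is an isomorphism of $G$-structure algebroids with connection onto $TX_{(\theta,\omega)}$ --- and the $G$-structure with connection on $X_{(\theta,\omega)}$ is obtained by transporting $(\theta,\omega)$ along $h$, the well-definedness across a fiber of $h$ being exactly the invariance of $(\theta,\omega)$ under the local equivalences that define $\sim$. The only point where you are vaguer than you should be is effectiveness of the descended $G$-action (which the paper's definition of a $G$-structure over an orbifold does require and which does not formally follow from effectiveness on $P$); in the maximal-rank case the kernel of the action on $X_{(\theta,\omega)}$ is at least discrete, since its orbits in $P$ lie in the discrete fibers of $h$ while the action on $P$ is locally free, but ruling it out entirely is a point the paper itself leaves implicit.
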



\bibliographystyle{abbrv}
\bibliography{bibliography2}

\end{document}